\theoremstyle{plain}
{
  \newtheorem{thm}{Theorem}[subsection]
  \newtheorem{defn}[thm]{Definition}
  \newtheorem{cor}[thm]{Corollary}
  \newtheorem{lem}[thm]{Lemma}
  \newtheorem{prop}[thm]{Proposition}
  \newtheorem{rem}[thm]{Remark}
  \newtheorem{clm}[thm]{Claim}
  \newtheorem{notation}[thm]{Notation}
  
  \newtheorem{constr}[thm]{Construction}
  
}
\renewcommand{\subsubsection}{\sssection\rm}
\newcommand{\const}{\text{\rm const}}
\newcommand{\id}{id}
\newcommand{\Spec}{\text{Spec}}
\newcommand{\Ker}{\text{Ker}}
\newcommand{\Aff}{\mathbf {A}}
\newcommand{\Pro}{\mathbf {P}}
\newcommand \xra {\xrightarrow }
\newcommand \hra {\hookrightarrow }
\begin{document}

\title
{
On Grothendieck-Serre conjecture
concerning principal $G$-bundles over regular semi-local domains containing a finite field: II}

\author{Ivan Panin \footnote{The author acknowledges support of the
the RNF-grant 14-11-00456.}
}

\date{June 2, 2014}

\maketitle

\begin{abstract}
In three preprints
\cite{Pan1},
\cite{Pan3}
and the present one
we prove
Grothendieck-Serre's conjecture concerning principal $G$-bundles over regular semi-local domains $R$
containing {\bf a finite field}
(here $G$ is a reductive group scheme).
The preprint
\cite{Pan1} contains main geometric presentation theorems.
The present preprint
contains reduction of the Grothendieck--Serre's conjecture to the case
of semi-simple simply-connected group schemes
(see Theorem
\ref{MainThmGeometric}).
The preprint
\cite{Pan3}
contains a proof of
that conjecture for regular semi-local domains $R$
containing {\bf a finite field}.

The Grothendieck--Serre conjecture for the case of regular semi-local domains containing
{\bf an infinite field}
is proven in
\cite{FP}.
{\it Thus the conjecture holds for regular semi-local domains containing a field.}

The reduction is based on two purity results (Theorems
\ref{TheoremA}
and
\ref{PurityForSubgroup}).
The first of that purity results looks as follows.
Let $\mathcal O$ be a semi-local ring of finitely many closed points
on a $k$-smooth irreducible affine scheme, where $k$ is {\bf a finite field}.
Given a smooth $\mathcal O$-group scheme morphism
$\mu: G \to C$ of reductive
$\mathcal O$-group schemes, with a torus $C$
consider a functor
from $\mathcal O$-algebras to abelian groups
$S \mapsto {\cal F}(S):=C(S)/\mu(G(S))$.
Assuming additionally that the kernel
$ker(\mu)$ of $\mu$
is a reductive $\mathcal O$-group scheme,
we prove that this functor satisfies
a purity theorem for $\mathcal O$.
In the case of an infinite field $k$ the two purity results are proven in
\cite{Pa2}.

Examples to mentioned purity results are considered at the very end of the preprint.
\end{abstract}

\section{Introduction}
\label{SectIntroduction}
Recall
\cite[Exp.~XIX, Defn.2.7]{D-G}
that an $R$-group scheme $G$ is called reductive
(respectively, semi-simple; respectively, simple), if it is affine and smooth
as an $R$-scheme and if, moreover, for each ring homomorphism
$s:R\to\Omega(s)$ to an algebraically closed field $\Omega(s)$,
its scalar extension $G_{\Omega(s)}$
is connected and reductive (respectively, semi-simple; respectively, simple) algebraic
group over $\Omega(s)$.
{\it Stress that all the groups $G_{\Omega(s)}$ are connected}.
The class of reductive group schemes
contains the class of semi-simple group schemes which in turn
contains the class of simple group schemes. This notion of a
simple $R$-group scheme coincides with the notion of a {simple semi-simple
$R$-group scheme from Demazure---Grothendieck
\cite[Exp.~XIX, Defn.~2.7 and Exp.~XXIV, 5.3]{D-G}.} {\it
Throughout the paper $R$ denotes an integral noetherian domain and $G$
denotes a reductive $R$-group scheme, unless explicitly stated
otherwise}.
\par
A well-known conjecture due to J.-P.~Serre and A.~Grothendieck
\cite[Remarque, p.31]{Se},
\cite[Remarque 3, p.26-27]{Gr1},
and
\cite[Remarque 1.11.a]{Gr2}
asserts that given a regular local ring $R$ and its field of
fractions $K$ and given a reductive group scheme $G$ over $R$ the
map
$$ H^1_{\text{\'et}}(R,G)\to H^1_{\text{\'et}}(K,G), $$
\noindent
induced by the inclusion of $R$ into $K$, has trivial kernel.

\begin{thm}
\label{MainThmGeometric}
Let $k$ be {\bf a finite field}. Assume that for any irreducible $k$-smooth affine variety $X$ and any
finite family of its closed points $x_1, x_2,\dots, x_n$ and the semi-local $k$-algebra
$\mathcal O:= \mathcal O_{X,x_1, x_2,\dots, x_n}$
and all semi-simple simply connected reductive $\mathcal O$-group schemes $H$
the pointed set map
$$H^1_{\text{\rm\'et}}(\mathcal O, H) \to H^1_{\text{\rm\'et}}(k(X), H),$$
\noindent
induced by the inclusion of $\mathcal O$ into its fraction field $k(X)$, has trivial kernel.

Then for any regular semi-local domain $\mathcal O$ of the form
$\mathcal O_{X,x_1, x_2,\dots, x_n}$ above
and any reductive $\mathcal O$-group scheme $G$
the pointed set map
$$H^1_{\text{\rm\'et}}(\mathcal O, G) \to H^1_{\text{\rm\'et}}(K, G),$$
\noindent
induced by the inclusion of $\mathcal O$ into its fraction field $K$, has trivial kernel.
\end{thm}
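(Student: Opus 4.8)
The plan is to reduce the general reductive case to the semi-simple simply-connected case in several stages, peeling off the non-semi-simple, non-simply-connected parts by dévissage, with the two purity theorems (Theorems~\ref{TheoremA} and~\ref{PurityForSubgroup}) supplying the control over the obstruction groups that appear. First I would fix a reductive $\mathcal O$-group scheme $G$ and a class $\xi \in H^1_{\text{\'et}}(\mathcal O,G)$ that dies over $K=k(X)$, and show it is trivial. The first step is to pass to the radical torus: let $T$ be the radical of $G$ (the maximal central torus), so $G/T$ is semi-simple, and one has an exact sequence $1\to T\to G\to G/T\to 1$. Using the associated cohomology sequence one controls the difference between $H^1(\mathcal O,G)$ and $H^1(\mathcal O,G/T)$ by $H^1$ and $H^2$ of tori over $\mathcal O$; here the classical injectivity for tori (and Nisnevich-local triviality) gives what is needed, but this is exactly where a purity-type statement for a quotient functor $C(S)/\mu(G(S))$ as in Theorem~\ref{TheoremA} enters to show the relevant boundary/obstruction classes are themselves trivial once they vanish generically.

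The second stage is to replace the semi-simple group $\bar G=G/T$ by its simply-connected cover. Let $\pi\colon G^{sc}\to G^{der}$ be the simply-connected central cover of the derived group, and more generally work with the fundamental-group extension $1\to \mu\to G^{sc}\to \bar G\to 1$ where $\mu=\ker\pi$ is a finite group scheme of multiplicative type (so in particular reductive in the degenerate sense, which is why the hypothesis on $\ker(\mu)$ being reductive in Theorem~\ref{TheoremA} is available). The long exact sequence in non-abelian cohomology then relates $H^1(\mathcal O,G^{sc})$ to $H^1(\mathcal O,\bar G)$, the discrepancy being measured by $H^1(\mathcal O,\mu)$ and the boundary into $H^2(\mathcal O,\mu)$; again one needs that a class in the image of $C(\mathcal O)/\mu(G(\mathcal O))$-type obstruction groups which is generically trivial is actually trivial, which is precisely what the purity theorems deliver for the semi-local ring $\mathcal O_{X,x_1,\dots,x_n}$. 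Twisting by $\xi$ so as to reduce to the neutral component of the fibre, one arranges that the only remaining obstruction to lifting a generically-trivial $\bar G$-torsor to a generically-trivial $G^{sc}$-torsor lives in a group handled by purity.

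The final step is purely formal: apply the standing hypothesis of the theorem to the simply-connected semi-simple group schemes $H$ (namely $G^{sc}$ and its inner forms $_\xi G^{sc}$, which are again semi-simple simply connected over $\mathcal O$) to conclude that the lifted class in $H^1_{\text{\'et}}(\mathcal O,G^{sc})$ is trivial, and then trace back through the two exact sequences to conclude $\xi$ itself is trivial. I expect the main obstacle to be the bookkeeping of the non-abelian boundary maps after twisting: one must check that at each stage the class whose triviality is being propagated genuinely lies in the image of a functor of the form $S\mapsto C(S)/\mu(G(S))$ to which Theorem~\ref{TheoremA} or Theorem~\ref{PurityForSubgroup} applies (in particular that the relevant morphism of group schemes is smooth with reductive kernel), rather than in some larger cohomology group for which no purity is known — equivalently, arranging the dévissage so that every obstruction that arises is of the controlled type. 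Everything else — the existence of the radical torus, the simply-connected cover, exactness of the cohomology sequences, and the behaviour of all these constructions under the generic-fibre restriction — is standard structure theory of reductive group schemes over a base.
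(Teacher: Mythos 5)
Your overall skeleton --- d\'evissage through two exact sequences, with the purity theorems controlling the obstruction functors and the hypothesis applied to simply connected groups at the end --- is the same as the paper's, but at both places where you invoke the purity theorems you invoke them for functors they do not cover, and this is where the actual content of the reduction lies. Your opening move splits off the radical torus via $1\to T\to G\to G/T\to 1$ with $T=\mathrm{Rad}(G)$. The obstruction functors this produces are $S\mapsto H^1_{\text{fppf}}(S,T)/\partial\bigl((G/T)(S)\bigr)$ and $H^2_{\text{fppf}}(-,T)$; neither is of the form $S\mapsto C(S)/\mu(G(S))$ with a torus as \emph{target} and a reductive kernel, so Theorem \ref{TheoremA} does not apply, and Theorem \ref{PurityForSubgroup} does not apply either, since there the ambient group is required to be semi-simple (hence the central subgroup is finite). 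The paper instead uses the dual sequence $1\to G_{der}\to G\xrightarrow{\mu} C\to 1$ with $C=\mathrm{corad}(G)$: here $\mu$ is smooth, the torus is the target, and the kernel $G_{der}$ is reductive, so the obstruction functor is literally the functor of Theorem \ref{TheoremA}. That choice is not cosmetic; it is what makes the purity input usable.

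Second, in the isogeny step you assert that the finite multiplicative-type kernel $\mu=\ker(G^{sc}\to\bar G)$ is ``reductive in the degenerate sense,'' so that Theorem \ref{TheoremA} applies. It is not: a nontrivial finite group scheme of multiplicative type is not a reductive group scheme in the sense used here (smooth affine with connected reductive geometric fibres), and over a finite field the central isogeny $G^{sc}\to\bar G$ need not even be smooth when $p=\mathrm{char}(k)$ divides the order of the kernel. This is precisely why the paper proves Theorem \ref{PurityForSubgroup} as a separate statement, by embedding $Z$ into a quasi-trivial torus $T^{+}$ and replacing the isogeny by the smooth morphism $G^{+}=(G\times T^{+})/Z\to T^{+}/Z$ whose kernel is the reductive group $G$, so that Theorem \ref{TheoremA} does apply; the injectivity of $H^2_{\text{fppf}}(\mathcal O,Z)\to H^2_{\text{fppf}}(K,Z)$ is handled the same way (Lemma \ref{Gr_Serre_For_H2_Z}). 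If you insist on the radical-torus first step you would have to carry out an analogous quasi-trivial resolution for $\mathrm{Rad}(G)\subset G$, which is not in the paper. Finally, the last step of the chase needs $H^1_{\text{\'et}}(\mathcal O,G_{der})\to H^1_{\text{\'et}}(K,G_{der})$ to be \emph{injective}, not merely to have trivial kernel; this is obtained by a twisting argument (Claim \ref{semisimpleinjective}) which your sketch gestures at but must be made explicit, since the hypothesis of the theorem only gives trivial kernel for simply connected groups.
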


To state our purity Theorems recall certain notions.
Let  ${\cal F}$ be  a covariant functor from the category of
commutative noetherian $R$-algebras to the category
of abelian groups. For any $R$-algebra $S$ which is a noetherian domain consider the sequence
$${\cal F}(S)\to {\cal F}(L)\to \bigoplus_{\mathfrak p}\mathcal F(L)/Im[{\cal F}(S_{\mathfrak p})\to \mathcal F(L)]$$
where $\mathfrak p$ runs over
the height $1$ primes of $S$ and
$L$ is the field of fractions of $S$.
{\it We say that}
{\it $\mathcal F$ satisfies purity for $S$ if this sequence --- which is
clearly a complex --- is exact}. The purity for $S$ is equivalent to the following
property
$$
\bigcap_{\textrm{ht} \mathfrak p=1} Im[{\cal F}(S_{\mathfrak p}) \to {\cal F}(L)]=
Im[{\cal F}(S) \to {\cal F}(L)].
$$

\begin{thm}[Theorem A]
\label{TheoremA}
\label{PurityGeometric}
Let $\mathcal O$ be a semi-local ring of finitely many closed points
on a $k$-smooth irreducible affine scheme $X$, where $k$ is {\bf a finite field}.
Let
$$\mu: G\to C$$
be a smooth $\mathcal O$-morphism of reductive
$\mathcal O$-group schemes, with a torus $C$. Set
$H= ker (\mu)$ and suppose additionally that
$H$ is a reductive $\mathcal O$-group scheme.
Then the functor
$$\mathcal F: S\mapsto C(S)/\mu(G(S))$$
defined on the category of $\mathcal O$-algebras
satisfies purity for $\mathcal O$ regarded as an
$\mathcal O$-algebra
via the identity map.
\end{thm}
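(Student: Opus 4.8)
The strategy is to reduce the purity statement for the functor $\mathcal F : S \mapsto C(S)/\mu(G(S))$ to a purity statement for the torus $C$ together with a Nisnevich-local triviality/homotopy-invariance argument for the $H$-torsors that arise. The key exact sequence is the one coming from the short exact sequence of $\mathcal O$-group schemes $1 \to H \to G \xra{\mu} C \to 1$ (exact because $\mu$ is smooth and surjective with kernel $H$): for any $\mathcal O$-algebra $S$ one gets a boundary map and an exact sequence of pointed sets
$$
G(S) \xra{\mu} C(S) \xra{\partial} H^1_{\text{\rm\'et}}(S, H) \to H^1_{\text{\rm\'et}}(S, G),
$$
so $\mathcal F(S) = C(S)/\mu(G(S))$ injects into $H^1_{\text{\rm\'et}}(S,H)$ with image exactly the kernel of $H^1_{\text{\rm\'et}}(S,H) \to H^1_{\text{\rm\'et}}(S,G)$, i.e. the classes of $H$-torsors that become trivial after pushing forward along $H \hra G$.

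First I would fix an element $\xi \in C(K)$ (where $K = \operatorname{Frac}\mathcal O$) lying in $\bigcap_{\operatorname{ht}\mathfrak p = 1} \operatorname{Im}[\mathcal F(\mathcal O_{\mathfrak p}) \to \mathcal F(K)]$, and I must produce a lift to $\mathcal F(\mathcal O)$. Via $\partial$, $\xi$ gives a class $[P] \in H^1_{\text{\rm\'et}}(K, H)$; the hypothesis says that for every height-one prime $\mathfrak p$, $[P]$ extends to a class $[P_{\mathfrak p}] \in H^1_{\text{\rm\'et}}(\mathcal O_{\mathfrak p}, H)$ that, moreover, lies in the image of $\mathcal F(\mathcal O_{\mathfrak p})$ (equivalently, is killed in $H^1(\mathcal O_{\mathfrak p},G)$). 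The plan is then to glue these local extensions to a single $H$-torsor $P$ over $\mathcal O$ itself. Here is where the geometric presentation theorems of \cite{Pan1} enter: one spreads $\xi$ and $P$ out to a principal $H$-bundle over an open $U \subseteq X$ whose complement has codimension $\ge 2$ after possibly shrinking $X$ to an affine neighbourhood of the finitely many points, chooses a good projection $X \to \mathbf A^{d-1}$ (a "nice triple" in the terminology of \cite{Pan1}) so that the bad divisor becomes finite over the base, and uses the resulting elementary-fibration/excision geometry to descend the torsor across the missing codimension-$\ge 2$ locus — exactly the mechanism by which such purity statements are proved. Since $H$ is a \emph{reductive} $\mathcal O$-group scheme (this is the extra hypothesis, and it is essential: it makes $H$ smooth affine with connected fibres so that the Nisnevich-local and homotopy-invariance inputs for $H$-torsors apply), the presentation theorem machinery yields an $H$-torsor over all of $\mathcal O$ restricting to $P$ over $K$.

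It remains to upgrade "we have an $H$-torsor over $\mathcal O$ lifting $[P]$" to "$\xi$ itself lifts to $\mathcal F(\mathcal O)$". For this I would run the cohomology exact sequence over $\mathcal O$: the constructed class $[\widetilde P] \in H^1_{\text{\rm\'et}}(\mathcal O,H)$ maps to some class in $H^1_{\text{\rm\'et}}(\mathcal O,G)$ which becomes trivial over $K$; since $G$ is reductive over the regular semi-local $\mathcal O$, Grothendieck–Serre (in the form already available, or bootstrapped via Theorem \ref{MainThmGeometric} for the relevant groups) gives that this $G$-class is already trivial over $\mathcal O$, so $[\widetilde P] = \partial(\eta)$ for some $\eta \in C(\mathcal O)$, and the class of $\eta$ in $\mathcal F(\mathcal O)$ maps to $\xi$ up to the ambiguity of $\mu(G(K))$ — which is exactly the quotient we have already divided by. Keeping track of the basepoint/twisting ambiguity (twisting $G$ and $C$ by the torsor $P$ to make $\partial$ additive in the right range) is the routine bookkeeping.

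**Main obstacle.** The crux — and the part that genuinely requires \cite{Pan1} rather than formal diagram chasing — is the geometric descent step: given an $H$-torsor defined away from a closed subset of codimension $\ge 2$ in a semi-local regular scheme, together with local extensions across each codimension-one point, produce a global extension. This is where one must invoke the "nice triples" and the elementary fibration structure to convert the codimension-$\ge 2$ obstruction into a manageable one-dimensional excision problem, and where the finiteness of the base field $k$ forces extra care (one cannot choose generic hyperplane sections freely, so the constructions from \cite{Pan1} tailored to finite fields are indispensable). Everything else — the cohomology sequence, the identification of $\mathcal F$ with a kernel inside $H^1(-,H)$, and the final descent of the $G$-class — is standard once that geometric input is in hand.
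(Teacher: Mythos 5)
Your proposal diverges from the paper's argument in a way that introduces two genuine gaps, one geometric and one logical.

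The geometric gap is the descent step you yourself identify as the crux. You propose to glue the local extensions $[P_{\mathfrak p}]\in H^1_{\text{\'et}}(\mathcal O_{\mathfrak p},H)$ into a single $H$-torsor over $\mathcal O$ by "descending the torsor across the missing codimension-$\ge 2$ locus." That step is essentially a purity theorem for the functor $H^1_{\text{\'et}}(-,H)$ with $H$ an arbitrary reductive group scheme, which is not known in dimension $\ge 2$ and is not what \cite{Pan1} supplies; it is at least as hard as the problem being solved. The whole point of the paper's construction is to avoid ever gluing $H$-torsors: it works with the quotient functor $\mathcal F=C/\mu(G)$ precisely because $C$ is a torus and therefore carries transfers (the norm maps of Section \ref{SectNorms}). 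The actual proof presents $U=\operatorname{Spec}\mathcal O$ on a relative curve $q'_U:\mathcal X'\to U$ (a nice triple), uses Theorem \ref{ThmEquatingGroups} to replace the group by a constant one, uses the finite surjective morphism $\sigma:\mathcal X'\to\Aff^1\times U$ of Theorem \ref{ElementaryNisSquare} to push the unramified class down to $\Aff^1_K$ by a norm (Lemma \ref{KeyUnramifiedness} guarantees the norm stays unramified), and then compares the specializations at $t=0$ and $t=1$ via homotopy invariance (Theorem \ref{HomInvNonram}, Corollary \ref{TwoSpecializations}); the lift is the explicit element $\xi_U=N_{\mathcal D_1/U}(\zeta|_{\mathcal D_1})N_{\mathcal D/U}(\zeta|_{\mathcal D})^{-1}$. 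The only cohomological vanishing inputs are over a DVR (\cite{Ni}) and over $K[t]$ for the field $K$ (\cite{C-TO}), i.e.\ one-dimensional situations — never a codimension-two descent.

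The logical gap is circularity. Your final step invokes Grothendieck--Serre for the reductive group $G$ over the regular semi-local ring $\mathcal O$ (to kill the image of $[\widetilde P]$ in $H^1_{\text{\'et}}(\mathcal O,G)$ knowing it dies over $K$), "bootstrapped via Theorem \ref{MainThmGeometric}." But Theorem \ref{MainThmGeometric} is proved in Section \ref{SectionGr_SerreConj} \emph{using} Theorem \ref{TheoremA}; within this series of papers one cannot assume Grothendieck--Serre for general reductive $G$ while proving Theorem A. (A smaller point: the identification of $\mathcal F(S)$ with $\ker[H^1(S,H)\to H^1(S,G)]$ via $\partial$ requires injectivity of $\partial$, which the paper establishes only over fields, Lemma \ref{BoundaryInj}, and over discrete valuation rings.) To repair the argument you would have to abandon the torsor-gluing strategy and adopt the transfer argument on the quotient functor, which is where the reductivity of $H$ is actually used — via \cite{Ni} over the completed DVRs in Lemma \ref{KeyUnramifiedness} and Theorem \ref{NisnevichCor} — rather than in any torsor extension step.
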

Clearly, this Theorem is similar to Theorem A from
\cite{Pa2}. However the proof of this Theorem is much more involved since the base field is finite.

Another functor satisfying purity is described by the formulae
(\ref{AnotherFunctor}) in Section
\ref{SectionOneMorePurityTheorem}.
The respecting purity result is Theorem
\ref{PurityForSubgroup}.

After the pioneering articles
\cite{C-T/P/S}
and
\cite{R}
on purity theorems for algebraic groups,
various versions of purity theorems were proved in
\cite{C-TO},
\cite{PS},
\cite{Z},
\cite{Pa}.
The most general result in the so called constant case was given in
\cite[Exm.3.3]{Z}. This result follows now from our Theorem (A) by
 taking $G$ to be a $k$-rational reductive group, $C=\mathbb G_{m,k}$
and
$\mu: G \to \mathbb G_{m,k}$ a dominant $k$-group morphism. The papers
\cite{PS},
\cite{Z},
\cite{Pa}
contain results for the nonconstant case. However they only consider specific examples
of algebraic scheme morphisms $\mu: G \to C$.
In the case of an infinite field $k$ the two purity results are proven in
\cite{Pa2}.


Let us to point out that
we use transfers for the functor
$R \mapsto C(R)$, but we do not use at all any kind of norm principle
for the homomorphism $\mu: G \to C$.

The preprint is organized as follows. In Section
\ref{SectNorms}
we construct
norm maps following a method from
\cite[Sect.6]{SV}.
In Section
\ref{SectUnramifiedElements}
we discuss
unramified elements.
A key point here is Lemma
\ref{KeyUnramifiedness}.
In Section
\ref{SectSpecializationMaps}
we discuss specialization maps.
A key point here is Corollary
\ref{TwoSpecializations}.
In Section \ref{SecEquatingGroups}
an equating statement is formulated.
It is proved in the Appendix.
In Section
\ref{NiceTriples}
a convenient technical tool is recalled and Theorem
\ref{ThmEquatingGroups}
is proved.
In Section
\ref{SectElemNisnevichSquare}
we recall a geometric presentation Theorem from
\cite{Pan1}
(see Theorem \ref{ElementaryNisSquare} below).
In Section
\ref{SectProofOfTheoremA}
Theorem A
is proved.
In Section
\ref{SectProofofTheoremB}
a Theorem B is proved. It extends Theorem A to
the case of local regular domains (not to semi-local) containing a field.
In Section
\ref{SectionOneMorePurityTheorem}
we consider the functor
(\ref{AnotherFunctor})
and prove a purity Theorem
\ref{PurityForSubgroup}
for that functor.
In Section
\ref{SectionGr_SerreConj}
Theorem
\ref{MainThmGeometric}
is proved.
Finally in Section
\ref{ExamplesOfPurityResults}
we collect several examples illustrating two Purity Theorems.

\subparagraph{Acknowledgments}
The author thanks very much A.Suslin for his interest to the topic of the present preprint.

\section{Norms}
\label{SectNorms}
Let $k\subset K\subset L$ be field extensions and assume that $L$
is finite separable over $K$. Let $K^{sep}$ be a separable closure
of $K$ and
$$\sigma_i:K\to K^{sep},\enspace 1\leq i\leq n$$
the different embeddings of $K$ into $L$. As in \S1, let $C$ be a
commutative algebraic group scheme defined over $k$. One can define
a norm map
$${\mathcal N}_{L/K}:C(L)\to C(K)$$
by
$${\mathcal N}_{L/K}(\alpha)=\prod_i C(\sigma_i)(\alpha) \in C(K^{sep})^{{\mathcal
G}(K)} =C(K)\;.
$$
Following Suslin and Voevodsky
\cite[Sect.6]{SV} we
generalize this construction to finite flat ring extensions.
\smallskip
Let $p:X\to Y$ be a finite flat morphism of affine schemes.
Suppose that its rank is constant, equal to $d$. Denote by
$S^d(X/Y)$ the $d$-th symmetric power of $X$ over $Y$.

\begin{lem} There is a canonical section
$$\mathcal N_{X/Y}: Y\to S^d(X/Y)$$
which satisfies the following three properties:
\begin{itemize}
\item
[(i)] Base change: for any map $f:Y'\to Y$ of affine schemes, putting
$X'=X\times_Y Y'$ we have a commutative diagram
$$\xymatrix{Y'\ar[rr]^{\mathcal N_{X'/Y'}}\ar[d]_{f}&&S^d(X'/Y')\ar[d]^{S^d(Id_X\times
f)}\\
             Y\ar[rr]^{\mathcal N_{X/Y}}&&S^d(X/Y)}$$
\item[(ii)]
Additivity: If $f_1: X_1\to Y$ and $f_2:X_2\to Y$ are finite
flat morphisms of degree $d_1$ and $d_2$ respectively, then,
putting $X=X_1\coprod X_2$, $f=f_1\coprod f_2$ and $d=d_1+d_2$, we
have a commutative diagram
$$\xymatrix{S^{d_1}(X_1/Y)\times S^{d_2}(X_2/Y)\ar[rr]^(0.63)\sigma&&S^d(X/Y)\\
&&\\
&Y\ar[uul]^{\mathcal N_{X_1/Y}\times \mathcal N_{X_2/Y}}\ar[uur]_{\mathcal N_{X/Y}}& }$$
where $\sigma$ is the canonical imbedding.\\
\item[(iii)]
Normalization: If $X=Y$ and $p$ is the identity, then
$\mathcal N_{X/Y}$ is the identity.
\end{itemize}
\end{lem}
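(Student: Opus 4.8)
This is a statement about affine schemes, so write $Y=\Spec A$ and $X=\Spec B$; finite flatness of constant rank $d$ means precisely that $B$ is a locally free $A$-module of rank $d$. Recall that $S^d(X/Y)=\Spec\big(TS^d_A(B)\big)$, where $TS^d_A(B):=(B^{\otimes_A d})^{\Sigma_d}$ is the $A$-algebra of symmetric tensors (the invariants of the factor-permutation action of $\Sigma_d$ on $B^{\otimes_A d}$; the quotient of an affine scheme by a finite group is the spectrum of the invariants). Thus giving the section $\mathcal N_{X/Y}$ amounts to giving an $A$-algebra homomorphism $\nu_{B/A}\colon TS^d_A(B)\to A$. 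My plan is to produce $\nu_{B/A}$ from the norm map and then verify (i)--(iii) formally; this is the Suslin--Voevodsky construction \cite[Sect.~6]{SV}, and below is how I would organise it.

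The key input is the universal property of $TS^d$ for multiplicative polynomial laws (Roby, Ferrand): there is a universal multiplicative homogeneous degree-$d$ polynomial law $\gamma\colon B\to TS^d_A(B)$, $b\mapsto b^{\otimes d}$, such that for every commutative $A$-algebra $C$, precomposition with $\gamma$ is a bijection between $\Hom_{A\text{-alg}}\big(TS^d_A(B),C\big)$ and the set of multiplicative homogeneous degree-$d$ polynomial laws $B\to C$. I would cite this rather than reprove it. Now let $N_{B/A}\colon B\to A$ be the norm, $N_{B/A}(b)=\det_A(m_b)$, where $m_b\colon B\to B$ is multiplication by $b$; this is a well-defined element of $A$ since $B$ is locally free of finite rank, with no choice of basis needed. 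Because $\det$ is multiplicative, sends the identity to $1$, scales by $\lambda^d$ under $b\mapsto\lambda b$, and commutes with base change, $N_{B/A}$ is a multiplicative homogeneous degree-$d$ polynomial law; by the universal property it corresponds to an $A$-algebra homomorphism $\nu_{B/A}\colon TS^d_A(B)\to A$ with $\nu_{B/A}(b^{\otimes d})=N_{B/A}(b)$. I define $\mathcal N_{X/Y}:=\Spec(\nu_{B/A})$.

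It remains to check the three properties. Property (iii) is immediate: for $X=Y$, $p=\id$ one has $B=A$, $d=1$, $N_{A/A}=\id$ and $TS^1_A(A)=A$, so $\nu_{A/A}=\id$. For (i): since $B$ is $A$-flat, $TS^d$ commutes with the base change $A\to A'=\Gamma(Y',\mathcal O_{Y'})$, and the norm law is visibly stable under base change, so $\nu_{B\otimes_A A'/A'}=\nu_{B/A}\otimes_A A'$, which dualises to the asserted commutative square. For (ii): writing $B_i=\Gamma(X_i,\mathcal O)$ (locally free of rank $d_i$) and $B=B_1\times B_2$, one has the standard decomposition $TS^d_A(B_1\times B_2)=\bigoplus_{a+b=d}TS^a_A(B_1)\otimes_A TS^b_A(B_2)$, which is exactly the scheme-theoretic identity $S^d(X_1\coprod X_2/Y)=\coprod_{a+b=d}S^a(X_1/Y)\times_Y S^b(X_2/Y)$, with $\sigma$ the open-closed immersion onto the $(a,b)=(d_1,d_2)$ summand; since $m_{(b_1,b_2)}$ is block diagonal, $N_{B/A}(b_1,b_2)=N_{B_1/A}(b_1)N_{B_2/A}(b_2)$, and tracing this through the universal property identifies $\nu_{B/A}$ on the $(d_1,d_2)$ summand with $\nu_{B_1/A}\otimes\nu_{B_2/A}$ and shows $\mathcal N_{X/Y}=\sigma\circ(\mathcal N_{X_1/Y}\times\mathcal N_{X_2/Y})$. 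Canonicity, which justifies the word in the statement, can also be seen by descent: any finite flat $B$ of rank $d$ is fppf-locally on $Y$ isomorphic to $A^{\times d}$, where (ii) and (iii) force $\nu$ to be the multiplication map, and (i) shows this descends; so $\nu_{B/A}$ is the unique homomorphism compatible with (i)--(iii).

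The genuine content sits in the two ingredients of the middle paragraph: the universal property of $TS^d$ for multiplicative polynomial laws, and the recognition of the determinant of multiplication as such a law; once these are in hand, everything else is formal. The fiddliest step is the bookkeeping in (ii) — matching the algebraic direct-sum decomposition of $TS^d$ of a product with the geometric disjoint-union decomposition of $S^d$, and keeping track of which summand the section lands in — but this is routine rather than a real obstacle. One point not to overlook: property (i) holds on the nose, with no comparison morphism inserted, precisely because $B$ is locally free, so that $TS^d$ genuinely commutes with base change.
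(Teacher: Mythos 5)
Your construction is correct, but it goes by a genuinely different route than the paper's. You obtain the $A$-algebra homomorphism $TS^d_A(B)\to A$ by invoking the Roby--Ferrand universal property of symmetric tensors (equivalently, divided powers, using flatness of $B$ to identify $\Gamma^d_A(B)$ with $TS^d_A(B)$) with respect to multiplicative homogeneous degree-$d$ polynomial laws, applied to the norm law $b\mapsto\det_A(m_b)$. The paper instead constructs the homomorphism $\varphi:S^d_A(B)\to A$ directly and self-containedly: it observes that the kernel $I$ of $B^{\otimes d}\to\bigwedge^d_A(B)$ is generated by tensors fixed by a transposition and is therefore an $S^d_A(B)$-submodule, so $\bigwedge^d_A(B)$ inherits an $S^d_A(B)$-module structure, giving $S^d_A(B)\to \mathrm{End}_A(\bigwedge^d_A(B))\cong A$ since $\bigwedge^d_A(B)$ is invertible. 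The two maps coincide (the action of $b^{\otimes d}$ on the top exterior power is multiplication by $\det(m_b)$), so the difference is one of packaging: the paper's argument needs no external input beyond linear algebra over $A$ and leaves (i)--(iii) as a routine check, whereas yours imports a nontrivial universal property but in exchange makes the functoriality, multiplicativity, and uniqueness (``canonicity'') of the section conceptually transparent. Your only point needing care is that the universal property you cite is properly a statement about $\Gamma^d_A(B)$, and its identification with the invariants $TS^d_A(B)$ uses the local freeness of $B$ -- which you do implicitly use again when asserting that $TS^d$ commutes with base change in (i).
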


\begin{proof} We construct a map $\mathcal N_{X/Y}$ and check that it has the
desired properties. Let $B=k[X]$ and $A=k[Y]$, so that $B$ is a
locally free $A$-module of finite rank $d$. Let $B^{\otimes
d}=B\otimes_A B\otimes_A\cdots \otimes_A B$ be the $d$-fold tensor product
of $B$ over $A$. The permutation group ${\frak S}_d$ acts on
$B^{\otimes d}$ by permuting the factors.  Let $S^d_A(B)\subseteq
B^{\otimes d}$ be the $A$-algebra of all the ${\frak
S}_d$-invariant elements of $B^{\otimes d}$. We consider
$B^{\otimes d}$ as an $S^d_A(B)$-module through the inclusion
$S^d_A(B)\subseteq B^{\otimes d}$ of $A$-algebras. Let $I$ be the
kernel of the canonical  homomorphism $B^{\otimes d}\to
\bigwedge^d_A(B  )$ mapping $b_1\otimes\cdots\otimes b_d$ to
$b_1\wedge\cdots\wedge b_d$. It is well-known (and easily checked
locally on $A$) that $I$ is generated by all  the elements $x\in
B^{\otimes d}$ such that $\tau(x)=x$ for some transposition
$\tau$. If $s$ is in $S^d_A(B)$, then $\tau(sx)=\tau(s)\tau(x)=sx$,
hence $sx$ is in $S^d(B)$ too. In other words, $I$ is an
$S^d_A(B)$-submodule of $B^{\otimes d}$. The induced $S^d_A(B)$-module
structure on $\bigwedge^d_A(B)$ defines an $A$-algebra homomorphism
$$\varphi:S^d_A(B) \to End_A(\bigwedge^d_A(B)) \; .$$
Since $B$ is locally free of rank $d$ over $A$, $\bigwedge^d_A(B)$
is an invertible $A$-module and we can canonically identify
$End_A(\bigwedge^d_A(B))$ with $A$. Thus we have a map

$$\varphi:S^d_A(B)\to A$$
and we define
$$N_{X/Y}: Y\to S^d(X/Y)$$
as the morphism of $Y$-schemes induced by $\varphi$. The
verification of properties (i), (ii) and (iii) is straightforward.
\end{proof}

\smallskip
Let $k$ be {\bf a finite field}. Let $\mathcal O$ be the semi-local ring of finitely many {\bf closed} points
on a smooth affine irreducible $k$-variety $X$.
Let $C$ be an affine smooth commutative $\mathcal O$-group scheme,
Let $p:X\to Y$ be a finite flat morphism of affine $\mathcal O$-schemes
and $f:X\to C$ any $\mathcal O$-morphism. We define {\it the norm} $N_{X/Y}(f)$ of $f$ {\it as the composite map}
\begin{equation}
\label{NormMap}
Y \xra{N_{X/Y}}
S^d(X/Y) \to S^d_{\mathcal O}(X) \xra{S^d_{\mathcal O}(f)} S^d_{\mathcal O}(C)\xra{\times} C
\end{equation}
Here we write $"\times"$ for the group law on $C$.
The norm maps $N_{X/Y}$ satisfy the following conditions
\begin{itemize}
\item[(i')]
Base change: for any map $f:Y'\to Y$ of affine schemes, putting
$X'=X\times_Y Y'$ we have a commutative diagram
$$
\begin{CD}
C(X)                  @>{(id \times f)^{*}}>>            C(X^{\prime})      \\
@V{N_{X/Y}}VV @VV{N_{X^{\prime}/Y^{\prime}}}V    \\
C(Y)                  @>{f^{*}}>>            C(Y^{\prime})
\end{CD}
$$
\item[(ii')]
multiplicativity: if $X=X_1 \amalg X_2$ then the diagram commutes
$$
\begin{CD}
C(X)                  @>{(id \times f)^{*}}>>            C(X_1) \times C(X_2)      \\
@V{N_{X/Y}}VV                              @VV{N_{X_1/Y}N_{X_2/Y}}V    \\
C(Y)                  @>{id}>>            C(Y)
\end{CD}
$$
\item[(iii')]
normalization: if $X=Y$ and the map $X \to Y$ is the identity then $N_{X/Y}=id_{C(X)}$.
\end{itemize}


\section{Unramified elements}
\label{SectUnramifiedElements}
Let $k$ be {\bf a finite field}, $\mathcal O$ be the $k$-algebra from Theorem
\ref{TheoremA}
and $K$ be the fraction field of $\mathcal O$.
Let
$\mu: G\to C$
be the morphism of reductive $\mathcal O$-group schemes from Theorem
\ref{TheoremA}.
We work in this section with {\it the category of commutative $\mathcal O$-algebras}.
For a commutative $\mathcal O$-algebra $S$ set
\begin{equation}
\label{MainFunctor}
{\cal F}(S)=C(S)/\mu(G(S)).
\end{equation}
Let $S$ be an $\mathcal O$-algebra which is a domain and
let $L$ be its fraction field.
Define the {\it subgroup of $S$-unramified elements of $\mathcal F (L)$} as
\begin{equation}
\label{DefnUnramified}
\mathcal F_{nr,S}(L)=
\bigcap_{\mathfrak p \in Spec(S)^{(1)}} Im [ \mathcal F(S_{\mathfrak p})\to\mathcal F(L) ],
\end{equation}
where $Spec(S)^{(1)}$ is the set of hight $1$ prime ideals in $S$.
Obviously the image of $\mathcal F(S)$ in $\mathcal F(L)$ is contained in
$\mathcal F_{nr,S}(L)$. In most cases
$\mathcal F(S_{\mathfrak p})$
injects into
$\mathcal F(L)$
and
$\mathcal F_{nr,S}(L)$
is simply the intersection of all
$\mathcal F(S_{\mathfrak p})$.

For an element $\alpha \in C(S)$ we will write $\bar \alpha$ for its
image in ${\cal F}(S)$. {\it In this section we will write  ${\cal F}$
for the functor }
(\ref{MainFunctor}).
We will repeatedly use  the following result due to Nisnevich.

\begin{thm}[\cite{Ni}] 
\label{NisnevichCor}
Let $S$ be a $\mathcal O$-algebra which is discrete valuation ring with fraction field $L$.
The map
${\cal F}(S) \to {\cal F}(L)$
is injective.
\end{thm}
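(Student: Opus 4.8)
The plan is to reduce the claim to Nisnevich's theorem on principal bundles over a discrete valuation ring, applied to the reductive group scheme $H = \ker(\mu)$. Since $S$ is a DVR over $\mathcal O$ with fraction field $L$, and $H$ is a reductive (hence smooth affine) $\mathcal O$-group scheme, base-changing to $S$ gives a reductive $S$-group scheme $H_S$. The precise result of Nisnevich that I would invoke is that for such a group the restriction map $H^1_{\text{\'et}}(S, H) \to H^1_{\text{\'et}}(L, H)$ has trivial kernel, and more to the point, that the boundary/orbit description of $\mathcal F(S)$ is controlled by this $H^1$. So the first step is to make the connection between $\mathcal F$ and nonabelian cohomology of $H$ explicit.

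The key step is the exact sequence coming from the short exact sequence of $\mathcal O$-group schemes $1 \to H \to G \xrightarrow{\mu} C \to 1$ (using that $\mu$ is smooth surjective with kernel $H$, this is exact as a sequence of fppf, indeed \'etale, sheaves). For any $\mathcal O$-algebra $S$ this yields an exact sequence of pointed sets
$$
G(S) \xrightarrow{\mu} C(S) \xrightarrow{\partial} H^1_{\text{\'et}}(S, H) \to H^1_{\text{\'et}}(S, G),
$$
so that $\mathcal F(S) = C(S)/\mu(G(S))$ injects, via $\partial$, into $\ker\big(H^1_{\text{\'et}}(S,H) \to H^1_{\text{\'et}}(S,G)\big) \subseteq H^1_{\text{\'et}}(S, H)$. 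Applying this with $S$ the DVR and with $L$ its fraction field, and using naturality of $\partial$ in $S$, we get a commutative square relating $\mathcal F(S) \to \mathcal F(L)$ to $H^1_{\text{\'et}}(S, H) \to H^1_{\text{\'et}}(L, H)$, with the horizontal maps on the $\mathcal F$ side being injections into the respective $H^1$'s. Hence injectivity of $\mathcal F(S) \to \mathcal F(L)$ follows once we know that the induced map on the images inside $H^1$ is injective, which is exactly the content of Nisnevich's theorem that $H^1_{\text{\'et}}(S, H) \to H^1_{\text{\'et}}(L, H)$ is injective (has trivial kernel, and in fact is injective as a map of pointed sets) for a reductive group scheme $H$ over a DVR.

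Concretely I would argue as follows: take $\alpha, \beta \in C(S)$ with the same image in $\mathcal F(L)$, i.e. $\alpha\beta^{-1} \in \mu(G(L))$. Since $\partial(\alpha\beta^{-1})$ is the difference of the twisted classes, it suffices to show $\partial(\gamma) = 1$ in $H^1_{\text{\'et}}(S, H)$ for $\gamma \in C(S)$ with $\partial(\gamma)$ trivial in $H^1_{\text{\'et}}(L, H)$; by Nisnevich's injectivity this forces $\partial(\gamma) = 1$, hence $\gamma \in \mu(G(S))$, giving $\bar\alpha = \bar\beta$ in $\mathcal F(S)$. One subtlety to handle carefully: the twisting argument for pointed sets only directly shows that the fibers of $H^1(S,H) \to H^1(L,H)$ are governed by $\mathcal F$ of the twisted forms, so strictly one should either use that the map of pointed sets is injective (not merely has trivial kernel) or rerun the kernel argument after twisting $G$, $C$ by a cocycle — this is routine but is the place where one must be precise.

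The main obstacle, such as it is, is purely bookkeeping: one must make sure the cited Nisnevich result gives injectivity of $H^1_{\text{\'et}}(S,H) \to H^1_{\text{\'et}}(L,H)$ as pointed sets (or argue the twisted version to upgrade triviality-of-kernel to injectivity), and one must check that the hypothesis that $H = \ker(\mu)$ is reductive — part of the standing assumption of Theorem \ref{TheoremA} — is exactly what licenses Nisnevich's theorem for $H_S$ over the DVR $S$. There is no hard geometry here; the whole point is that this lemma is the one elementary input that feeds into the later, genuinely difficult arguments, so the proof should be short.
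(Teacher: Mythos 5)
Your proof is correct and follows essentially the same route as the paper: both pass through the boundary map to $H^1_{\text{\'et}}(-,H)$ for $H=\ker(\mu)$, the commutative square comparing $S$ and $L$, and Nisnevich's trivial-kernel theorem for reductive group schemes over a DVR. The subtlety you flag about upgrading trivial kernel to injectivity is already dissolved by your own reduction to $\gamma=\alpha\beta^{-1}$: since $C$ is commutative, $\mathcal F(S)\to\mathcal F(L)$ is a homomorphism of groups, so trivial kernel suffices, exactly as in the paper.
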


\begin{proof}
Let $H$ be the kernel of $\mu$.
Since $\mu$ is smooth and $C$ is a tori, the group scheme sequence
$$1 \to H \to G \to C \to 1$$
gives rise to a short exact sequence of group sheaves in the \'{e}tale topology.
In turn that sequence of sheaves induces a long exact sequence of pointed sets.
So, the boundary map
$\partial: C(S) \to \textrm{H}^1_{\text{\'et}}(S,H)$
fits in a commutative diagram
$$
\CD
C(S)/\mu (G(S)) @>>> C(L)/\mu (G(L)) \\
@VVV @VVV \\
\textrm{H}^1_{\text{\'et}}(S,H) @>>> \textrm{H}^1_{\text{\'et}}(L,H). \\
\endCD
$$
in which the vertical arrows have  trivial kernels.
The bottom arrow has  trivial kernel by a Theorem from
\cite{Ni}, since $H$ is a reductive $\mathcal O$-group scheme.
Thus the top arrow has  trivial kernel too.

\end{proof}

\begin{lem}
\label{BoundaryInj}
Let $\mu: G \to C$ be the above morphism of our reductive group schemes.
Let $H= \ker (\mu)$.
Then for an $\mathcal O$-algebra $L$, where $L$ is a field, the boundary map
$\partial: C(L)/{\mu (G(L))} \to \textrm{H}^1_{\text{\'et}}(L,H)$
is injective.
\end{lem}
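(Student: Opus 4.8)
The plan is to prove this by an explicit computation with non‑abelian $1$‑cocycles. The two facts to keep in mind are that, since $L$ is a \emph{field}, étale cohomology over $L$ of the smooth affine $\mathcal O$‑group schemes $H,G,C$ agrees with Galois cohomology, and that, since $C$ is a \emph{torus}, $\mu(G(L))$ is a subgroup of $C(L)$, so the quotient $C(L)/\mu(G(L))$ is a well‑defined group and ``injective'' is a genuine assertion, strictly stronger than the triviality of the kernel. Fix a separable closure $L\subset L_s$ and put $\Gamma=\mathrm{Gal}(L_s/L)$. As recalled in the proof of Theorem~\ref{NisnevichCor}, $1\to H\to G\xra{\mu}C\to1$ is a short exact sequence of sheaves of groups in the étale topology; in particular $\mu$ is an epimorphism of sheaves, so, étale algebras over the separably closed field $L_s$ being split, the map $\mu\colon G(L_s)\to C(L_s)$ is surjective, and for $c\in C(L)$ the class $\partial(c)$ is represented by the $H(L_s)$‑valued cocycle $\sigma\mapsto b^{-1}\,\sigma(b)$, where $b\in G(L_s)$ is any lift of $c$.

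Now let $c_1,c_2\in C(L)$ with $\partial(c_1)=\partial(c_2)$; I must show $c_1c_2^{-1}\in\mu(G(L))$. Choose lifts $b_i\in G(L_s)$ of $c_i$ and form the cocycles $z_i\colon\sigma\mapsto b_i^{-1}\,\sigma(b_i)$. The hypothesis $\partial(c_1)=\partial(c_2)$ means precisely that $z_1$ and $z_2$ are cohomologous, i.e. there is $a\in H(L_s)$ with $z_1(\sigma)=a^{-1}\,z_2(\sigma)\,\sigma(a)$ for all $\sigma\in\Gamma$. Substituting the definitions of the $z_i$ gives $b_1^{-1}\,\sigma(b_1)=(b_2a)^{-1}\,\sigma(b_2a)$, which rearranges to $b_2a\,b_1^{-1}=\sigma\bigl(b_2a\,b_1^{-1}\bigr)$ for every $\sigma$; hence $g:=b_2a\,b_1^{-1}\in G(L_s)^{\Gamma}=G(L)$. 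Finally, since $\mu$ is a group‑scheme homomorphism and $a\in H(L_s)=(\ker\mu)(L_s)$, one computes $\mu(g)=\mu(b_2)\,\mu(a)\,\mu(b_1)^{-1}=c_2c_1^{-1}$, so $c_1c_2^{-1}=\mu(g)^{-1}\in\mu(G(L))$, which is exactly what we wanted.

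I do not anticipate a real obstacle: this is the standard fact that, for a short exact sequence of groups with commutative cokernel, the fibres of the connecting map are the cosets of the image of the middle group, and the only steps requiring care are the bookkeeping of left/right translates in the cocycle identities and the verification that $g$ is $\Gamma$‑invariant. If one prefers to avoid cocycles, the same conclusion follows from Serre's twisting formalism: twist the sequence by a cocycle representing $\partial(c_2)$, observe that $H$ acts trivially on the abelian $C$ so that the twisted cokernel is again $C$, and apply exactness at $C(L)$ of the twisted sequence together with the identification $\mu(G(L))=\mu({}_{z}G(L))$ coming from $\mu$ being $\Gamma$‑equivariant on the twisted points; but the direct computation above is shorter and self‑contained.
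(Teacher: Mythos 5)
Your proof is correct and is essentially the paper's argument written in the language of explicit $1$-cocycles: the paper identifies $\partial(t)$ with the class of the torsor $H_t=\mu^{-1}(t)$ and extracts from an isomorphism $H_s\to H_t$ a Galois-invariant element $u=r^{-1}\psi(r)\in G(L)$ with $t=s\mu(u)$, which is exactly your element $g=b_2ab_1^{-1}$ in cocycle coordinates. The only presentational difference is torsors versus cocycle representatives; the key step (telescoping the coboundary relation to produce a $\Gamma$-invariant element of $G(L_s)$ mapping to $c_2c_1^{-1}$) is identical.
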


\begin{proof}
For a $L$-rational point $t \in C$ set
$H_t = \mu^{-1}(t)$. The action by  left multiplication of $H$ on $H_t$
makes $H_t$ into a left principal homogeneous
$H$-space and moreover
$\partial (t) \in \textrm{H}^1_{\text{\'et}}(L,H)$
coincides with the isomorphism class of
$H_t$. Now suppose that $s,t \in C(L)$ are such that
$\partial (s)=\partial (t)$. This means that
$H_t$ and $H_s$ are
isomorphic as  principal homogeneous $H$-spaces.
We must check that for certain $g \in G(L)$ one has
$t=s \mu(g)$.

Let $L^{sep}$ be a separable closure of $L$. Let
$\psi: H_s \to H_t$
be an isomorphism of  left $H$-spaces. For any
$r \in H_s(L^{sep})$
and
$h \in H_s(L^{sep})$
one has
$$
(hr)^{-1}\psi (hr)= r^{-1}h^{-1}h \psi (r)= r^{-1} \psi (r).
$$
Thus for any
$\sigma \in Gal(L^{sep}/L)$
and any
$r \in H_s(L^{sep})$
one has
$$
r^{-1}\psi(r)=(r^{\sigma})^{-1}\psi (r^{\sigma})=(r^{-1}\psi(r))^{\sigma}
$$
which means that the point
$u=r^{-1}\psi(r)$ is a $Gal(L^{sep}/L)$-invariant point of
$G(L^{sep})$. So $u \in G(L)$.
The following relation shows that the $\psi$ coincides with the right multiplication
by $u$. In fact, for any $r \in H_s(L^{sep})$ one has
$\psi (r)= rr^{-1}\psi (r)= ru$. Since $\psi$ is the right multiplication
by $u$ one has $t=s\mu(u)$, which proves the lemma.

\end{proof}

Let $k$, $\mathcal O$ and $K$ be as above in this Section.
Let $\mathcal K$ be a field containing $K$ and
$x: \mathcal K^* \to \mathbb Z$
be a discrete valuation vanishing on $K$.
Let $A_x$ be the valuation ring of $x$. Clearly,
$\mathcal O \subset A_x$.
Let
$\hat A_x$ and $\hat {\mathcal K}_x$
be the completions of $A_x$ and $\mathcal K$ with respect to $x$.
Let
$i:\mathcal K \hookrightarrow \hat {\mathcal K}_x$
be the inclusion. By Theorem
\ref{NisnevichCor}
the map
${\cal F}(\hat A_x)\to {\cal F}(\hat{\mathcal K}_x)$
is injective. We will identify
${\cal F}(\hat A_x)$
with its image under this map. Set
$$
{\cal F}_x(\mathcal K)=i_*^{-1}({\cal F}(\hat A_x)).
$$

The inclusion
$A_x\hookrightarrow \mathcal K$
induces a map
$
{\cal F}(A_x) \to {\cal F}(\mathcal K)
$
which is injective by Lemma
\ref{NisnevichCor}.
So both groups
${\cal F}(A_x)$ and ${\cal F}_x(\mathcal K)$
are subgroups of
${\cal F}(\mathcal K)$.
The following lemma shows that
${\cal F}_x(\mathcal K)$
coincides with the subgroup of $
{\cal F}(\mathcal K)$
consisting of all elements {\it unramified} at $x$.

\begin{lem}
\label{TwoUnramified}
${\cal F}(A_x)={\cal F}_x(\mathcal K)$.
\end{lem}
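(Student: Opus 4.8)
The plan is to prove the two inclusions $\mathcal F(A_x) \subseteq \mathcal F_x(\mathcal K)$ and $\mathcal F_x(\mathcal K) \subseteq \mathcal F(A_x)$ separately. The first inclusion is the easy one: if $\bar\alpha \in \mathcal F(\mathcal K)$ lies in the image of $\mathcal F(A_x)$, then applying the functor $\mathcal F$ to the composite $A_x \hookrightarrow \hat A_x \hookrightarrow \hat{\mathcal K}_x$ and comparing with $A_x \hookrightarrow \mathcal K \xrightarrow{i} \hat{\mathcal K}_x$ shows that the image of $\bar\alpha$ in $\mathcal F(\hat{\mathcal K}_x)$ factors through $\mathcal F(\hat A_x)$; hence $\bar\alpha \in i_*^{-1}(\mathcal F(\hat A_x)) = \mathcal F_x(\mathcal K)$. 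This is just functoriality together with the identification of $\mathcal F(\hat A_x)$ with its image in $\mathcal F(\hat{\mathcal K}_x)$ coming from Theorem \ref{NisnevichCor}.

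The substantive inclusion is $\mathcal F_x(\mathcal K) \subseteq \mathcal F(A_x)$. Here I would start with $\bar\alpha \in \mathcal F(\mathcal K)$, represented by some $\alpha \in C(\mathcal K)$, such that its image $i_*\bar\alpha$ in $\mathcal F(\hat{\mathcal K}_x)$ comes from an element $\bar\beta \in \mathcal F(\hat A_x)$, say $\bar\beta$ represented by $\beta \in C(\hat A_x)$. Modulo $\mu(G(\hat{\mathcal K}_x))$ we then have $i_*\alpha = \beta \cdot \mu(g)$ for some $g \in G(\hat{\mathcal K}_x)$. The goal is to modify $\alpha$ by an element of $\mu(G(\mathcal K))$ so that it already lands in $C(A_x)$, which would show $\bar\alpha \in \mathrm{Im}[\mathcal F(A_x) \to \mathcal F(\mathcal K)]$. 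The key input must be an approximation argument: since $G$ is smooth over $\mathcal O$ (hence over $A_x$), the map $G(A_x) \to G(\hat A_x)$ has dense image in the $x$-adic topology, and more precisely one can use Greenberg-type / implicit function theorem approximation to find $g_0 \in G(A_x)$ approximating $g$ closely enough. The point $\alpha \cdot \mu(g_0)^{-1} \in C(\mathcal K)$ should then be $x$-adically close to $\beta \in C(\hat A_x)$, hence (again using smoothness of $C$ and that $C(A_x) = C(\mathcal K) \cap C(\hat A_x)$ inside $C(\hat{\mathcal K}_x)$, which follows from $A_x = \mathcal K \cap \hat A_x$) actually lies in $C(A_x)$. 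Its class in $\mathcal F(\mathcal K)$ equals $\bar\alpha$, giving the inclusion.

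The technical heart is making the approximation step precise — i.e. arguing that $C(\mathcal K) \cap C(\hat A_x) = C(A_x)$ and that a sufficiently close $x$-adic approximation of the factorization $i_*\alpha = \beta\cdot\mu(g)$ can be realized by $\mathcal K$-rational, resp. $A_x$-rational, points. I expect this to rest on two facts: first, $A_x$ being the pullback $\mathcal K \times_{\hat{\mathcal K}_x} \hat A_x$ (so affine $A_x$-points are literally $\mathcal K$-points that are integral at $x$), and second, smoothness of $G$ giving, via the formal/étale-local structure, that $G(\hat A_x)$ is approximated by $G(A_x^h)$ and ultimately — after possibly shrinking — that the relevant factorization descends. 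The role of Nisnevich's theorem (Theorem \ref{NisnevichCor}) throughout is to guarantee the various maps $\mathcal F(-) \to \mathcal F(\mathrm{Frac}(-))$ are injective, so that all these identifications of subgroups of $\mathcal F(\hat{\mathcal K}_x)$ are legitimate and the class of the modified element is genuinely unchanged. The main obstacle is thus the descent/approximation bookkeeping rather than any deep new idea.
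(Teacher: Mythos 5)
Your easy inclusion $\mathcal F(A_x)\subseteq\mathcal F_x(\mathcal K)$ is fine (the paper treats it as immediate), and the identification $C(A_x)=C(\mathcal K)\times_{C(\hat{\mathcal K}_x)}C(\hat A_x)$ is correct for the affine scheme $C$. But the heart of your argument — the approximation step — has a genuine gap. First, a type mismatch: the element $g$ in the factorization $i_*\alpha=\beta\cdot\mu(g)$ lives in $G(\hat{\mathcal K}_x)$, not in $G(\hat A_x)$, so it cannot be approximated by points of $G(A_x)$; what you actually need is $g_0\in G(\mathcal K)$ close to $g$, or at least $g\in G(\mathcal K)\cdot\mu^{-1}(C(\hat A_x))$. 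Second, and more seriously, neither density statement is available: $A_x$ is a non-henselian discrete valuation ring (e.g. $K[t]_{(t)}$), so smoothness does \emph{not} give $G(A_x)$ dense in $G(\hat A_x)$, and density of $G(\mathcal K)$ in $G(\hat{\mathcal K}_x)$ is weak approximation at the place $x$, which fails for general reductive groups (already for non-rational tori). Even the weaker statement you need — that $G(\mathcal K)$ meets every coset of the open subgroup $\mu^{-1}(C(\hat A_x))$ — is essentially equivalent to the assertion being proved, so the argument is circular at exactly the point where the real work happens.

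The paper circumvents this by converting the problem into one about torsors under $H=\ker\mu$. It applies the boundary map $\partial:\mathcal F(-)\to \textrm{H}^1_{\text{\'et}}(-,H)$ to $a\in\mathcal F(\mathcal K)$ and $\hat a\in\mathcal F(\hat A_x)$, glues the resulting $H$-torsors along $\hat{\mathcal K}_x$ to obtain a class $\xi_x\in \textrm{H}^1_{\text{\'et}}(A_x,H)$ (formal patching over the square $(A_x,\hat A_x;\mathcal K,\hat{\mathcal K}_x)$), then invokes Nisnevich's theorem that $\textrm{H}^1_{\text{\'et}}(A_x,G)\to \textrm{H}^1_{\text{\'et}}(\mathcal K,G)$ has trivial kernel to conclude that $\xi_x$ dies in $\textrm{H}^1_{\text{\'et}}(A_x,G)$ and hence equals $\partial(a'_x)$ for some $a'_x\in\mathcal F(A_x)$; finally the injectivity of $\partial$ on $\mathcal F(\mathcal K)$ (Lemma \ref{BoundaryInj}) identifies $a'_x$ with $a$. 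These three ingredients — torsor gluing, Nisnevich's theorem for $G$ over the DVR $A_x$, and injectivity of the boundary map — are precisely what replace the unavailable approximation, and none of them appears in your proposal. You would need to either import them or find a substitute for weak approximation; the latter does not exist in this generality.
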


\begin{proof}
We only have to check the inclusion
$
{\cal F}_x(\mathcal K) \subseteq {\cal F}(A_x)
$.
Let
$
a_x \in {\cal F}_x(\mathcal K)
$
be an element. It determines the elements
$
a \in {\cal F}(\mathcal K)
$
and
$
\hat a \in {\cal F}(\hat A_x)
$
which coincide when regarded as elements of
$
{\cal F}(\hat{\mathcal K}_x)
$.
We denote this common element in
$
{\cal F}(\hat{\mathcal K}_x)
$
 by
$
\hat a_x
$.
Let $H=\ker (\mu)$ and let
$\partial: C(-) \to \textrm{H}^1_{\text{\'et}}(-,H)$
be the boundary map.

Let
$
\xi=\partial(a)\in \textrm{H}^1_{\text{\'et}}(\mathcal K,H)
$,
$
\hat \xi=\partial(\hat a)\in \textrm{H}^1_{\text{\'et}}(\hat A_x,H)
$
and
$\hat \xi_x=\partial(\hat a_x)\in \textrm{H}^1_{\text{\'et}}(\hat {\mathcal K_x},H)$
Clearly,
$
\hat \xi
$
and
$
\xi
$
both coincide with
$
\hat \xi_x
$
when regarded as elements of
$
\textrm{H}^1_{\text{\'et}}(\hat {\mathcal K_x},H)
$.
Thus one can glue
$
\xi
$
and
$
\hat \xi
$
to get a
$
\xi_x \in \textrm{H}^1_{\text{\'et}}(A_x,H)
$
which maps to
$
\xi
$
under the map induced by the inclusion
$
A_x \hookrightarrow \mathcal K
$
and maps to
$
\hat \xi
$
under the map induced by the inclusion
$
A_x \hookrightarrow \hat A_x
$.

We now show that
$
\xi _x
$
has the form
$
\partial (a_x^\prime)
$
for a certain
$
a_x^\prime \in {\cal F}(A_x)
$.
In fact, observe that the image
$
\zeta
$
of
$
\xi
$
in
$
\textrm{H}^1_{\text{\'et}}(\mathcal K, G)
$
is  trivial.
By Theorem
\cite{Ni}
the map
$$
\textrm{H}^1_{\text{\'et}}(A_x, G) \to \textrm{H}^1_{\text{\'et}}(\mathcal K,G)
$$
has  trivial kernel. Therefore the image
$
\zeta_x
$
of
$
\xi_x
$
in
$
\textrm{H}^1_{\text{\'et}}(A_x, G)
$
is trivial. Thus there exists an element
$
a_x^\prime \in {\cal F}(A_x)
$
with
$
\partial(a_x^\prime)=\xi_x \in \textrm{H}^1_{\text{\'et}}(A_x, H).
$

We now prove that
$
a_x^\prime
$
coincides with
$
a_x
$
in
$
{\cal F}_x(\mathcal K)
$.
Since
$
{\cal F}(A_x)
$
and
$
{\cal F}_x(\mathcal K)
$
are both subgroups of
$
{\cal F}(\mathcal K)
$,
it suffices to show that
$
a_x^\prime
$
coincides with the element
$
a
$
in
$
{\cal F}( \mathcal K)
$.
By Lemma
\ref{BoundaryInj}
the map
\begin{equation}
\label{BoundaryInjFormulae}
{\cal F}(\mathcal K)\xrightarrow{\partial} \textrm{H}^1_{\text{\'et}}(\mathcal K, H)
\end{equation}
is \underbar {injective}. Thus it suffices to check that
$
\partial(a_x^\prime)=\partial(a)
$
in
$
\textrm{H}^1_{\text{\'et}}(\mathcal K, H)
$.
This is indeed the case because
$
\partial(a_x^\prime)=\xi_x
$
and
$
\partial(a)=\xi
$,
and
$
\xi_x
$
coincides with
$
\xi
$
when regarded over
$
\mathcal K
$.
We have proved that
$
a_x^\prime
$
coincides with
$
a_x
$
in
$
{\cal F}_x(\mathcal K)
$.
Thus the inclusion
$
{\cal F}_x(\mathcal K) \subseteq {\cal F}(A_x)
$
is proved, whence the lemma.

\end{proof}

Let $k$, $\mathcal O$ and $K$ be as above in this Section.
\begin{lem}
\label{KeyUnramifiedness}
Let $B \subset A$ be a finite extension of Dedekind $K$-algebras, which are domains.
Let $0 \neq f \in A$ be such that $A/fA$ is finite over $K$.
Let $h \in B\cap fA$ be such that the induced map
$B/hB\to A/fA$ is an isomorphism.
Suppose
$hA=fA\cdot J^{\prime\prime}$
for an ideal
$J^{\prime\prime} \subseteq A$
co-prime to the ideal $fA$.

Let $E$ and $F$ be the field of fractions of $B$ and $A$ respectively.
Let $\alpha \in C(A_f)$ be such that
$\bar \alpha \in {\cal F}(F)$
is $A$-unramified. Then, for
$\beta= N_{F/E}(\alpha)$,
the class
$\bar \beta \in {\cal F}(E)$
is $B$-unramified.
\end{lem}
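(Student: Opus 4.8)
The plan is to verify the $B$-unramifiedness of $\bar\beta$ one height~$1$ prime at a time. Since $B$ is Dedekind, its height~$1$ primes are exactly its maximal ideals, so it suffices to prove that for each maximal ideal $\mathfrak q\subset B$ the class $\bar\beta$ lies in the image of $\mathcal F(B_{\mathfrak q})$ in $\mathcal F(E)$. I would begin with a few elementary consequences of the hypotheses. As $h\in fA$ we may write $h=fc$ with $c\in A$, and cancellation of ideals in the Dedekind domain $A$ forces $J^{\prime\prime}=cA$; thus the hypothesis that $J^{\prime\prime}$ is co-prime to $fA$ says precisely that $cA+fA=A$, i.e.\ $c$ lies in no prime of $A$ containing $f$. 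Also, $A$ is a finitely generated torsion-free, hence projective, $B$-module, so it is finite flat over $B$ of constant rank $d=[F:E]$; localising, $A_h$ is finite flat over $B_h$ of rank $d$, we have $\alpha\in C(A_f)\subseteq C(A_h)$, and the base-change property of norms applied to $\Spec E\to\Spec B_h$ (using $A_h\otimes_{B_h}E=F$) gives $\beta=N_{F/E}(\alpha)=N_{A_h/B_h}(\alpha)|_E$. In particular $\beta$ is the image of an element $\beta_h\in C(B_h)$, so for every maximal ideal $\mathfrak q$ with $h\notin\mathfrak q$ — where $B_{\mathfrak q}$ is a localisation of $B_h$ — $\bar\beta$ automatically lies in the image of $\mathcal F(B_{\mathfrak q})$. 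Only the (finitely many, as $B/hB\cong A/fA$ is Artinian) maximal ideals $\mathfrak q\ni h$ remain.

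Fix such a $\mathfrak q$. The isomorphism $B/hB\cong A/fA$ induced by $B\hookrightarrow A$ matches $\Spec A/fA$ with $\Spec B/hB$ compatibly with $B\to A$; hence there is a unique prime $\mathfrak p$ of $A$ over $\mathfrak q$ containing $f$, while every other prime $\mathfrak P$ of $A$ over $\mathfrak q$ contains $h=fc$ but not $f$, hence contains $c$. Localising the isomorphism at $\mathfrak q$ (where $f$ becomes a unit at every such $\mathfrak P\ne\mathfrak p$) yields a $B_{\mathfrak q}$-algebra isomorphism $B_{\mathfrak q}/hB_{\mathfrak q}\cong A_{\mathfrak p}/fA_{\mathfrak p}$ of Artinian local rings; comparing residue fields gives $f(\mathfrak p/\mathfrak q)=1$, comparing lengths gives $v_{\mathfrak q}(h)=v_{\mathfrak p}(f)=:e$, and then $v_{\mathfrak p}(h)=e(\mathfrak p/\mathfrak q)\,v_{\mathfrak q}(h)=e\cdot e(\mathfrak p/\mathfrak q)$ together with $v_{\mathfrak p}(h)=v_{\mathfrak p}(f)+v_{\mathfrak p}(c)=e+0$ forces $e(\mathfrak p/\mathfrak q)=1$. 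Consequently $\hat A_{\mathfrak p}$ — a direct factor of the finite $\hat B_{\mathfrak q}$-algebra $A\otimes_B\hat B_{\mathfrak q}=\prod_{\mathfrak P\mid\mathfrak q}\hat A_{\mathfrak P}$, where $\hat A_{\mathfrak P}$ is the $\mathfrak P$-adic completion of $A$ — is finite free of rank $e(\mathfrak p/\mathfrak q)f(\mathfrak p/\mathfrak q)=1$ over the complete discrete valuation ring $\hat B_{\mathfrak q}$, so the structure map identifies $\hat B_{\mathfrak q}=\hat A_{\mathfrak p}$, hence also the fraction fields $\hat E_{\mathfrak q}=\hat F_{\mathfrak p}$ (with $\hat F_{\mathfrak P}:=\mathrm{Frac}(\hat A_{\mathfrak P})$).

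Next I would compute the image of $\beta$ in $C(\hat E_{\mathfrak q})$. Base-changing $\beta_h=N_{A_h/B_h}(\alpha)$ along $B_h\to\hat E_{\mathfrak q}$ and using $A_h\otimes_{B_h}\hat E_{\mathfrak q}=(A\otimes_B\hat B_{\mathfrak q})[1/h]=\prod_{\mathfrak P\mid\mathfrak q}\hat F_{\mathfrak P}$ (the product over all primes $\mathfrak P$ of $A$ over $\mathfrak q$, since $h$ is a non-zero non-unit in each complete DVR $\hat A_{\mathfrak P}$), the base-change and multiplicativity properties of norms give $\beta|_{\hat E_{\mathfrak q}}=\prod_{\mathfrak P\mid\mathfrak q}N_{\hat F_{\mathfrak P}/\hat E_{\mathfrak q}}(\alpha|_{\hat F_{\mathfrak P}})$. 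The factor at $\mathfrak p$ is a norm of rank $1$ with $\hat F_{\mathfrak p}=\hat E_{\mathfrak q}$, so by the normalization property it equals $\alpha|_{\hat F_{\mathfrak p}}$; each factor at $\mathfrak P\ne\mathfrak p$ equals $N_{\hat A_{\mathfrak P}/\hat B_{\mathfrak q}}(\alpha|_{\hat A_{\mathfrak P}})\in C(\hat B_{\mathfrak q})$ — here $\alpha$ is defined over $\hat A_{\mathfrak P}$ because $f$ is a unit there. Thus $\beta|_{\hat E_{\mathfrak q}}=\alpha|_{\hat F_{\mathfrak p}}\cdot\delta$ with $\delta\in C(\hat B_{\mathfrak q})$. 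Since $\bar\alpha$ is $A$-unramified it is, in particular, unramified at $\mathfrak p$, so $\alpha=\tilde\alpha\cdot\mu(g)$ in $C(F)$ for some $\tilde\alpha\in C(A_{\mathfrak p})$ and $g\in G(F)$; restricting to $\hat F_{\mathfrak p}$ and using $\hat A_{\mathfrak p}=\hat B_{\mathfrak q}$ we obtain $\beta|_{\hat E_{\mathfrak q}}=\bigl(\tilde\alpha|_{\hat F_{\mathfrak p}}\cdot\delta\bigr)\cdot\mu\bigl(g|_{\hat F_{\mathfrak p}}\bigr)$ with $\tilde\alpha|_{\hat F_{\mathfrak p}}\cdot\delta\in C(\hat B_{\mathfrak q})$, so the image of $\bar\beta$ in $\mathcal F(\hat E_{\mathfrak q})$ lies in the image of $\mathcal F(\hat B_{\mathfrak q})$. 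By Lemma~\ref{TwoUnramified}, applied to the discrete valuation $v_{\mathfrak q}$ of $E$ (whose valuation ring is $B_{\mathfrak q}$), this means $\bar\beta$ lies in the image of $\mathcal F(B_{\mathfrak q})$ in $\mathcal F(E)$, i.e.\ $\bar\beta$ is unramified at $\mathfrak q$. Combined with the case $h\notin\mathfrak q$, this proves $\bar\beta$ is $B$-unramified.

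The step I expect to be the main obstacle is the local analysis above: showing that the two hypotheses on $h$ (that $B/hB\cong A/fA$ and that $hA=fA\cdot J^{\prime\prime}$ with $J^{\prime\prime}$ co-prime to $fA$) together force the unique prime $\mathfrak p$ over $\mathfrak q$ that meets the divisor $\{f=0\}$ to be unramified over $\mathfrak q$ with trivial residue extension. This is exactly what makes the argument work: after completing at $\mathfrak q$, the only factor $\hat F_{\mathfrak P}$ of the cover over which $\alpha$ may fail to be unramified becomes $\hat E_{\mathfrak q}$ itself and contributes, through the normalization property of the norm, merely $\alpha$ rather than a genuine norm of $\alpha$ — this is essential, since (as stressed in the introduction) no norm principle for $\mu$ is available to handle $N_{\hat F_{\mathfrak p}/\hat E_{\mathfrak q}}(\mu(g))$ otherwise. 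A minor related point requiring care is to verify that the localised isomorphism $B_{\mathfrak q}/hB_{\mathfrak q}\cong A_{\mathfrak p}/fA_{\mathfrak p}$ is one of $B_{\mathfrak q}$-algebras, so that it really computes the residue degree and pins down the structure map $\hat B_{\mathfrak q}\to\hat A_{\mathfrak p}$.
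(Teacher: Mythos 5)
Your proof is correct and follows essentially the same route as the paper's: reduce to the finitely many primes $\mathfrak q$ of $B$ containing $h$, decompose $A\otimes_B\hat B_{\mathfrak q}$ into completions so that the distinguished factor over the unique prime meeting $\{f=0\}$ is $\hat B_{\mathfrak q}$ itself, split the norm into the identity contribution there plus integral contributions elsewhere, and conclude via Lemma~\ref{TwoUnramified}. The only difference is that you actually verify the identification $\hat A_{\mathfrak p}=\hat B_{\mathfrak q}$ (via $e=f=1$ from the two hypotheses on $h$), which the paper asserts without proof.
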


\begin{proof}
The only primes at which $\bar \alpha$ could be ramified are those which divide
$hA$.
Let $\mathfrak p$ be one of them. Check that $\bar \alpha$ is
unramified at $\mathfrak p$.

To do this we consider all primes
$\mathfrak q_1, \mathfrak q_2, \dots, \mathfrak q_n$
in $A$ lying over
$\mathfrak p$.
Let
$\mathfrak q_1$
be the unique prime dividing $f$ and lying over
$\mathfrak p$.
Then
$$
A\otimes_B  \hat B_{\mathfrak p} = \hat A_{\mathfrak q_1} \times \prod_{i \neq 1} \hat A_{\mathfrak q_i}
$$
with
$\hat A_{\mathfrak q_1}=\hat B_{\mathfrak p}$.
If $F$, $E$ are the fields of fractions of $A$ and $B$
then
$$
F \otimes_{B} \hat B_{\mathfrak p}=\hat F_{\mathfrak q_1} \times \prod_{i \neq 1}  \hat F_{\mathfrak q_n}
$$
and
$\hat F_{\mathfrak q_1}=\hat E_{\mathfrak p}$.
We will write
$\hat F_i$ for $\hat F_{\mathfrak q_i}$
and
$\hat A_i$ for $\hat A_{\mathfrak q_i}$.
Let
$$\alpha \otimes 1= (\alpha_1,\dots, \alpha_n)
\in C(\hat F_1) \times \dots \times  C(\hat F_n).$$
Clearly for $i \geq 2$ one has
$\alpha_i \in C(\hat A_i)$
and
$\alpha_1=\mu(\gamma_1)\alpha^{\prime}_1$
with
$\alpha^{\prime}_1 \in C(\hat A_1)=C(\hat B_{\mathfrak p})$
and
$\gamma_1 \in G(\hat F_1)=G(\hat E_{\mathfrak p})$.
Now
$\beta \otimes 1 \in C(\hat E_{\mathfrak p})$
coincides with the product
$$
\alpha_1N_{\hat F_2/\hat E_{\mathfrak p}}(\alpha_2)\cdots N_{\hat F_n/\hat E_{\mathfrak p}}(\alpha_n)=
\mu(\gamma_1)
[\alpha^{\prime}_1N_{\hat F_2/\hat E_{\mathfrak p}}(\alpha_2)\cdots N_{\hat F_n/\hat E_{\mathfrak p}}(\alpha_n)].
$$
Thus
$\overline {\beta \otimes 1}=\bar \alpha^{\prime}_1\overline {N_{\hat F_2/\hat E_{\mathfrak p}}(\alpha_2)}
\cdots \overline {N_{\hat F_n/\hat E_{\mathfrak p}}(\beta_n)} \in {\cal F}(\hat B_{\mathfrak p})$.
Let
$i: E \hra \hat E_{\mathfrak p}$
be the inclusion and
$i_*:{\cal F}(E) \to {\cal F}(\hat E_{\mathfrak p})$
be the induced map.
Clearly
$i_*(\bar \beta)=\overline {\beta \otimes 1}$
in
${\cal F}(\hat E_{\mathfrak p})$.
Now
Lemma
\ref{TwoUnramified}
shows that
the element
$\bar \beta \in {\cal F}(E)$
belongs to
${\cal F}(B_{\mathfrak p})$.
Hence $\bar \beta$ is $B$-unramified.

\end{proof}

\section{Specialization maps}
\label{SectSpecializationMaps}
Let $k$ be {\bf a finite field}, $\mathcal O$ be the $k$-algebra from Theorem
\ref{TheoremA}
and $K$ be the fraction field of $\mathcal O$.
Let
$\mu: G\to C$
be the morphism of reductive $\mathcal O$-group schemes from Theorem
\ref{TheoremA}.
We work in this section with {\it the category of commutative $K$-algebras} and with
the functor
\begin{equation}
\label{MainFunctor2}
\mathcal F: S\mapsto C(S)/\mu(G(S))
\end{equation}
defined on the category of $K$-algebras. So, we assume in this Section that
each ring from this Section is equipped with a distinguished $K$-algebra structure and
each ring homomorphism from this Section respects that structures.
Let $S$ be an $K$-algebra which is a domain and
let $L$ be its fraction field.
Define the {\it subgroup of $S$-unramified elements $\mathcal F_{nr,S}(L)$ of $\mathcal F (L)$} by formulae
(\ref{DefnUnramified}).

For a regular domain $S$ with the fraction field $\cal K$
and each height one prime $\mathfrak p$ in $S$
we construct {\bf specialization maps}
$s_{\mathfrak p}: {\cal F}_{nr, S}({\cal K}) \to {\cal F} {(K(\mathfrak p))}$,
where $\cal K$ is the field of fractions of $S$ and
$K(\mathfrak p)$
is the residue field of
$R$ at the prime $\mathfrak p$.

\begin{defn}
\label{SpecializationDef}
Let
$Ev_{\mathfrak p}: C(S_{\mathfrak p}) \to C(K(\mathfrak p))$
and
$ev_{\mathfrak p}: {\cal F}(S_{\mathfrak p}) \to {\cal F}(K(\mathfrak p))$
be the maps induced by the canonical $K$-algebra homomorphism
$S_{\mathfrak p} \to K(\mathfrak p)$.
Define a homomorphism
$s_{\mathfrak p}: {\cal F}_{nr, S}({\cal K}) \to {\cal F} {(K(\mathfrak p))}$
by
$s_{\mathfrak p}(\alpha)= ev_{\mathfrak p}(\tilde \alpha)$,
where
$\tilde \alpha$
is a lift of $\alpha$ to
${\cal F}(S_{\mathfrak p})$.
Theorem
\ref{NisnevichCor}
shows that the map $s_{\mathfrak p}$ is well-defined.
It is called the specialization map. The map $ev_{\mathfrak p}$ is called the evaluation
map at the prime $\mathfrak p$.

Obviously for
$\alpha \in C(S_\mathfrak p)$
one has
$s_{\mathfrak p}(\bar \alpha)=\overline {Ev_{\mathfrak p}(\alpha)}
\in {\cal F}(K(\mathfrak p))$.
\end{defn}

\begin{lem}[\cite{H}]
\label{Harder}
Let $H^{\prime}$ be a smooth linear algebraic group over the field $K$.
Let $S$ be a $K$-algebra which is a Dedekind domain with field of fractions $\cal K$.
If $\xi \in \textrm{H}^1_{\text{\'et}}({\cal K},H^{\prime})$ is an $S$-unramified element for the functor
$\textrm{H}^1_{\text{\'et}}(-,H^{\prime})$
(see (\ref{DefnUnramified}) for the Definition), then $\xi$ can be lifted to an element of
$\textrm{H}^1_{\text{\'et}}(S,H^{\prime})$.
\end{lem}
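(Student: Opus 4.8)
The plan is to prove Lemma~\ref{Harder} by a patching argument: produce a model of $\xi$ over a dense open subscheme of $\Spec(S)$, correct it near the finitely many ``bad'' points by means of the unramifiedness hypothesis, and then glue. Since $S$ is a Dedekind domain, $\Spec(S)$ is an integral regular noetherian scheme of dimension $\le 1$, so $\Spec(S)^{(1)}$ is exactly the set of closed points and $\mathcal{K}=\varinjlim_{0\neq f\in S}S_f$. An $H'$-torsor representing $\xi$ is a finitely presented affine $\mathcal{K}$-scheme, hence by a standard limit (spreading-out) argument it descends to an $H'$-torsor $\mathcal{E}_0$ over some principal open $U_0=D(f)$, with $\mathcal{E}_0|_{\mathcal{K}}\cong\xi$. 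Write $T:=\Spec(S)\setminus U_0=\{\mathfrak p_1,\dots,\mathfrak p_r\}$, a finite set of closed points. For each $i$, applying the assumption that $\xi$ is $S$-unramified at the height-one prime $\mathfrak p_i$ produces an $H'$-torsor $\mathcal{E}_i$ over $S_{\mathfrak p_i}$ with $\mathcal{E}_i|_{\mathcal{K}}\cong\xi$; fix an isomorphism $\phi_i\colon \mathcal{E}_i|_{\mathcal{K}}\xrightarrow{\ \sim\ }\mathcal{E}_0|_{\mathcal{K}}$ of $H'_{\mathcal{K}}$-torsors.

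Next I would spread out these local data to a genuine Zariski cover. Since $S_{\mathfrak p_i}=\varinjlim_{g\notin\mathfrak p_i}S_g$ and every object in sight is finitely presented, there is $g_i\in S\setminus\mathfrak p_i$ over which both $\mathcal{E}_i$ (as a torsor over $D(g_i)$) and $\phi_i$ (as an isomorphism over $D(g_i)\cap U_0$) are already defined. The finitely many closed points of $D(g_i)$ at which these extensions might a priori fail are all distinct from $\mathfrak p_i$ — indeed $\mathcal{E}_0$ is not defined at $\mathfrak p_i$, so the common domain of $\phi_i$ avoids $\mathfrak p_i$ in any case — and therefore, enlarging the forbidden set, we may invoke prime avoidance (CRT for the distinct maximal ideals involved) to choose $g_i$ so that, in addition, $g_i\in\mathfrak p_j$ for all $j\neq i$. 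With such a choice $\{U_0,D(g_1),\dots,D(g_r)\}$ is a Zariski open cover of $\Spec(S)$ (note $\mathfrak p_i\in D(g_i)$), and the torsors match on overlaps: over $U_0\cap D(g_i)$ we identify the spread-out $\widetilde{\mathcal{E}}_i$ with $\mathcal{E}_0$ via $\widetilde\phi_i$, while $D(g_i)\cap D(g_j)=D(g_ig_j)$ contains no point of $T$ (if $\mathfrak p_l\in D(g_ig_j)$ then $g_i\notin\mathfrak p_l$ forces $l=i$ and $g_j\notin\mathfrak p_l$ forces $l=j$), hence $D(g_ig_j)\subseteq U_0$, where both $\widetilde{\mathcal{E}}_i$ and $\widetilde{\mathcal{E}}_j$ are identified with $\mathcal{E}_0$. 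These identifications satisfy the cocycle condition, since on every triple overlap they all factor through $\mathcal{E}_0$; using that torsors under the smooth affine group $H'$ are representable by schemes and form a stack for the Zariski topology, they glue to an $H'$-torsor $\mathcal{E}$ on $\Spec(S)$. Its class in $\textrm{H}^1_{\text{\'et}}(S,H')$ restricts over $\mathcal{K}$ to the class of $\mathcal{E}_0|_{\mathcal{K}}\cong\xi$, which is the required lift.

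The existence of the local solutions is immediate from the very definition of unramifiedness; the delicate point — and the main obstacle — is organizing them into an honest Zariski cover with compatible transition data. This is where the limit/spreading-out arguments and the prime-avoidance bookkeeping (forcing every pairwise overlap $D(g_ig_j)$ back inside $U_0$, so that no genuine gluing obstruction survives) are needed, together with the fact that étale $H'$-torsors satisfy Zariski descent. Modulo these standard facts the argument is, in essence, the one of~\cite{H}.
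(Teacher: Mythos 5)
Your argument is correct and is exactly the patching argument the paper intends (its own proof is literally the single word ``Patching'', with a reference to Harder): spread $\xi$ out over a principal open $D(f)$, use unramifiedness at the finitely many closed points of $V(f)$ to produce local models, shrink by prime avoidance so that all pairwise overlaps among the local patches fall back inside $D(f)$, and glue by Zariski descent for torsors under a smooth affine group scheme. Nothing further is needed.
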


\begin{proof}
Patching.

\end{proof}

\begin{thm}[\cite{C-TO}, Prop.2.2]
\label{C-TOthm}
Let $G^{\prime}=G_K$, where $G$ is the reductive $\mathcal O$-group scheme from this Section
(it is connected and even geometrically connected, since we follow
\cite[Exp.~XIX, Defn.2.7]{D-G}). Then
$$ker[\textrm{H}^1_{\text{\'et}}({K}[t],G^{\prime}) \to \textrm{H}^1_{\text{\'et}}({K}(t),G^{\prime})]=* \ .$$
\end{thm}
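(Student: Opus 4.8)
The plan is to reduce the statement to Raghunathan's theorem (equivalently the result of Colliot-Thélène–Ojanguren in the form cited), which asserts that for a reductive (hence connected) algebraic group $G'$ over an infinite field $K$, every $G'$-torsor over the affine line $\mathbf{A}^1_K$ that becomes trivial over $K(t)$ is already trivial. The key point to check is that our $G' = G_K$ meets the hypotheses of that theorem: $G$ is a reductive $\mathcal O$-group scheme in the sense of \cite[Exp.~XIX, Defn.~2.7]{D-G}, so its geometric fibres are connected; base-changing to $K$ and then to a geometric point of $\mathrm{Spec}(K)$, we see $G'$ is a connected reductive $K$-group. The only subtlety is that $K$ here is the fraction field of a semi-local ring of closed points on a $k$-smooth variety over a \textbf{finite} field $k$, hence $K$ need not be perfect and need not be infinite in the naive sense needed for the classical statement—so the first step is to record that $K$ is in fact an infinite field (it contains a finitely generated field extension of $k$ of positive transcendence degree, as $X$ is a positive-dimensional irreducible $k$-variety; if $\dim X = 0$ the theorem is vacuous since then $\mathcal O = K$ is finite and there is nothing interesting, but in all applications $\dim X \geq 1$), and that the Colliot-Thélène–Ojanguren argument only needs $K$ infinite, not perfect.

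First I would state precisely what is being proved: an element $\eta \in \textrm{H}^1_{\text{\'et}}(K[t], G')$ whose restriction to $\textrm{H}^1_{\text{\'et}}(K(t), G')$ is trivial must itself be trivial. Next I would invoke the cited Proposition 2.2 of \cite{C-TO}: over a field $F$ (infinite, and in their treatment one works with a connected reductive group), a $G'$-torsor on $\mathbf{A}^1_F$ trivial at the generic point is trivial. The mechanism there is that such a torsor is classified by a map $\mathbf{A}^1_F \to BG'$, and triviality at the generic point plus the good behaviour of $G'$-torsors under specialization forces the torsor to be constant; more concretely one uses that the torsor extends to a torsor on $\mathbf{P}^1_F$ away from a point, applies a theorem on triviality of rationally trivial torsors on $\mathbf{A}^1$ over a field (Raghunathan's theorem), and concludes. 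Since \cite{C-TO} is quoted verbatim, I would not reprove this; I would only make sure the hypotheses transfer.

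The main obstacle, and the one point deserving care, is the transfer of hypotheses: checking that $G' = G_K$ is genuinely a connected reductive group over the infinite field $K$. Connectedness is exactly the content of the parenthetical remark in the statement—because we follow \cite[Exp.~XIX, Defn.~2.7]{D-G}, every geometric fibre of $G$ is connected, so $G_K$ has connected geometric fibre over $\mathrm{Spec}(K)$, i.e. $G_K$ is geometrically connected, a fortiori connected. Reductivity of $G_K$ is immediate from reductivity of $G$ over $\mathcal O$ (it is preserved by arbitrary base change). Infiniteness of $K$ follows because $K = k(X)$ is the function field of a positive-dimensional irreducible variety over the finite field $k$, hence has infinite transcendence degree's worth of elements—precisely, it is a finitely generated field extension of $k$ of transcendence degree $\dim X \geq 1$, so it is infinite. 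With these three facts in hand, \cite[Prop.~2.2]{C-TO} applies directly and yields $\ker[\textrm{H}^1_{\text{\'et}}(K[t],G') \to \textrm{H}^1_{\text{\'et}}(K(t),G')] = *$, as claimed. I expect the write-up to be short: essentially a citation plus the verification of these hypotheses.
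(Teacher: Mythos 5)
Your approach matches the paper's: Theorem \ref{C-TOthm} is not proved in the text at all but simply quoted from \cite[Prop.~2.2]{C-TO}, with the parenthetical remark in the statement recording exactly the hypothesis you verify, namely that $G_K$ is a (geometrically) connected reductive group over the field $K$. Your additional care about $K$ being infinite is a reasonable precaution but not essential, since the cited proposition (which rests on the Raghunathan--Ramanathan theorem on principal bundles over the affine line) holds over an arbitrary base field.
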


We need the following theorem.
\begin{thm}[Homotopy invariance]
\label{HomInvNonram}
Let $S \mapsto {\cal F}(S)$ be the functor defined by the formulae
(\ref{MainFunctor2})
and let
${\cal F}_{nr,K[t]}(K(t))$ be defined by the formulae
(\ref{DefnUnramified}).
Let
$K(t)$ be the rational function field in one variable.
Then one has
$$
{\cal F}(K)={\cal F}_{nr,K[t]}(K(t)).
$$
\end{thm}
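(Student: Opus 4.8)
The plan is to prove the two inclusions
$\mathcal F(K) \subseteq \mathcal F_{nr,K[t]}(K(t))$
and
$\mathcal F_{nr,K[t]}(K(t)) \subseteq \mathcal F(K)$
separately. The first inclusion is essentially formal: an element of $\mathcal F(K)$, pulled back along $K \to K[t]$, lands in $\mathcal F(K[t])$, hence in $\mathcal F((K[t])_{\mathfrak p})$ for every height one prime $\mathfrak p$, hence in $\mathcal F_{nr,K[t]}(K(t))$; one should note here that the map $\mathcal F(K) \to \mathcal F(K(t))$ is injective by Lemma \ref{NisnevichCor} applied to the discrete valuation ring $K[t]_{(t)}$ (or any other $K$-rational point), so the inclusion makes sense as an inclusion of subgroups of $\mathcal F(K(t))$.

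For the reverse inclusion, let $\bar\beta \in \mathcal F_{nr,K[t]}(K(t))$ be represented by $\beta \in C(K(t))$, and set $\xi = \partial(\bar\beta) \in \mathrm H^1_{\text{\'et}}(K(t), H)$ where $H = \ker(\mu)$. First I would check that $\xi$ is $K[t]$-unramified for the functor $\mathrm H^1_{\text{\'et}}(-,H)$: this follows because $\partial$ is compatible with localization and $\bar\beta$ lifts to $\mathcal F((K[t])_{\mathfrak p})$ for each height one $\mathfrak p$. Then Lemma \ref{Harder} (Harder's patching lemma, with $H' = H_K$, which is a smooth linear algebraic group over $K$) produces a lift $\tilde\xi \in \mathrm H^1_{\text{\'et}}(K[t], H)$ of $\xi$. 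Next I would push $\tilde\xi$ forward to $\mathrm H^1_{\text{\'et}}(K[t], G)$; over $K(t)$ this image is trivial (since $\xi = \partial(\bar\beta)$ dies in $\mathrm H^1_{\text{\'et}}(K(t),G)$), so by Theorem \ref{C-TOthm} (the Colliot-Th\'el\`ene--Ojanguren result, applicable since $G_K$ is connected) the image of $\tilde\xi$ in $\mathrm H^1_{\text{\'et}}(K[t],G)$ is already trivial. Hence $\tilde\xi = \partial(\bar\gamma)$ for some $\bar\gamma \in \mathcal F(K[t])$ whose image in $\mathrm H^1_{\text{\'et}}(K(t),H)$ equals $\xi = \partial(\bar\beta)$; by the injectivity of $\partial$ over the field $K(t)$ (Lemma \ref{BoundaryInj}), $\bar\gamma$ and $\bar\beta$ coincide in $\mathcal F(K(t))$.

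It now remains to show that $\bar\gamma \in \mathcal F(K[t])$ actually comes from $\mathcal F(K)$. The idea is to specialize at two distinct $K$-rational points and compare. Evaluating $\bar\gamma$ at $t = 0$ and at $t = 1$ (using the evaluation maps $ev_{(t)}$ and $ev_{(t-1)}$ of Definition \ref{SpecializationDef}) gives elements of $\mathcal F(K)$; I expect that the homotopy $\bar\gamma$ itself, together with the fact that $\bar\beta = \bar\gamma$ is $K[t]$-unramified, forces these specializations to agree and to agree with $\bar\beta$ viewed appropriately — more precisely, one checks that $s_{\mathfrak p}(\bar\beta)$ is independent of the $K$-rational point $\mathfrak p$ because $\bar\gamma$ is defined on all of $\mathrm{Spec}\,K[t]$ minus finitely many closed points yet extends over every $K$-rational one, so the specialization is literally $ev_{\mathfrak p}(\bar\gamma)$ and the difference $\bar\gamma - (\text{constant extension of } ev_{(t)}(\bar\gamma))$ is a class vanishing at $t=0$; applying Theorem \ref{C-TOthm}-type rigidity once more (or directly the injectivity of $\mathcal F(K[t]) \to \mathcal F(K(t))$ combined with vanishing at one point) shows $\bar\gamma$ is constant, i.e. lies in the image of $\mathcal F(K)$.

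The main obstacle I anticipate is this last step — showing the homotopy $\bar\gamma$ is genuinely constant rather than merely unramified everywhere on $\mathbf A^1_K$. The cleanest route is probably to invoke Theorem \ref{C-TOthm} a second time for the pointed-set version: form the class $\bar\gamma \cdot (ev_{(t)}\bar\gamma)^{-1} \in \mathcal F(K[t])$, observe it maps to a class in $\mathrm H^1_{\text{\'et}}(K[t],H)$ that is unramified and trivial at $t = 0$, and argue via the $G$-triviality (Theorem \ref{C-TOthm}) plus injectivity of $\partial$ over $K(t)$ that it is trivial in $\mathcal F(K(t))$, hence $\bar\gamma = ev_{(t)}(\bar\gamma)$ already in $\mathcal F(K(t))$, placing $\bar\beta = \bar\gamma$ in the image of $\mathcal F(K)$. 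Care is needed because $\mathcal F$ is only a quotient, not a cohomology functor, so every manipulation must be transported to $\mathrm H^1_{\text{\'et}}(-,H)$ via $\partial$ and brought back using Lemma \ref{BoundaryInj}; tracking that the relevant classes remain unramified under these passages is the delicate bookkeeping of the argument.
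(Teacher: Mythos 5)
Your handling of the inclusion $\mathcal F(K)\subseteq \mathcal F_{nr,K[t]}(K(t))$ and the bulk of the reverse inclusion is exactly the paper's argument: pass to $\xi=\partial(\bar\beta)\in H^1_{\text{\'et}}(K(t),H)$, lift it to $\tilde\xi\in H^1_{\text{\'et}}(K[t],H)$ by Harder's patching lemma (Lemma \ref{Harder}), kill its image in $H^1_{\text{\'et}}(K[t],G)$ by Theorem \ref{C-TOthm}, obtain $\bar\gamma\in\mathcal F(K[t])$ with $\partial(\bar\gamma)=\tilde\xi$ from exactness, and identify $\bar\gamma$ with $\bar\beta$ in $\mathcal F(K(t))$ via the injectivity of $\partial$ over the field $K(t)$ (Lemma \ref{BoundaryInj}). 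Up to that point there is nothing to object to.

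The gap is in your final step, descending from $\mathcal F(K[t])$ to $\mathcal F(K)$. The fact you are missing is elementary: $C$ is a torus, so $C(K[t])=C(K)$ (split $C$ over a finite separable extension $L/K$; then $C(L[t])=(L[t]^{\times})^{n}=(L^{\times})^{n}=C(L)$ because $L[t]$ is a domain, and take Galois invariants). Hence $\mathcal F(K)\to\mathcal F(K[t])$ is surjective and $\bar\gamma$ is automatically constant; this one-line observation is how the paper concludes. Your proposed substitute does not work as written. From the statement that $\bar\gamma$ times the inverse of its value at $t=0$ vanishes at $t=0$, together with injectivity of $\mathcal F(K[t])\to\mathcal F(K(t))$, you cannot conclude that this class vanishes in $\mathcal F(K(t))$: that inference is precisely the homotopy invariance you are in the middle of proving, not a consequence of injectivity. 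Theorem \ref{C-TOthm} does not help either, since it concerns classes that become trivial over $K(t)$, not classes trivial at a rational point of $\Aff^1_K$. A further caveat: $\partial$ is a map of pointed sets into $H^1_{\text{\'et}}(-,H)$, not a group homomorphism, so ``transporting differences'' through $\partial$ as you suggest requires justification in its own right. Replace that last paragraph by the observation $C(K[t])=C(K)$ and your proof is complete and coincides with the paper's.
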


\begin{proof}
The injectivity is clear, since the composition
$$
{\cal F}(K) \to {\cal F}_{nr,K[t]}(K(t)) \xrightarrow{s_0} {\cal F}(K)
$$
coincides with the identity (here
$
s_0
$
is the specialization map at the point zero defined in 4.6).

It remains to check the surjectivity. Let
$$\mu_K = \mu \otimes_{\mathcal O} K: G_K= G \otimes_{\mathcal O} K \to C \otimes_{\mathcal O} K = C_K .$$
Let
$a\in {\cal F}_{nr,K[t]}(K(t))$
and let
$H_K=ker(\mu_K)$. Since $\mu$ is smooth the $K$-group $H_K$ is smooth.
Since $G_K$ is reductive it is $K$-affine. Whence $H_K$ is $K$-affine.
Clearly,
the element
$
\partial (a)\in H_{et}^1(K(t),H_K)
$
is a class which for every closed point
$
x\in \Aff^1_K
$
belongs to the image of
$
H_{et}^1({\cal O}_x,H_K)
$.
Thus by Lemma
\ref{Harder},
$
\xi:= \partial (a)
$
can be represented by an element
$
\tilde \xi \in H_{et}^1(K[t],H_K)
$,
where
$
K[t]
$
is the polynomial ring.
Consider the diagram
$$
\SelectTips{cm}{}
\xymatrix @C=2pc @R=10pt {
{} & \tilde a \ar@{|->}[r] & {\tilde \xi} \ar@{|->}[r] & {\tilde \zeta} & {} \\
1 \ar[r] & {{\cal F}(K[t])} \ar[r]^{\partial\qquad} \ar[dd]_{\epsilon} & {H_{et}^1(K[t],H_K)} \ar[r] \ar[dd]_{\rho} & {H_{et}^1(K[t],G_K)} \ar[dd]_{\eta} \\
{} & {} & {} & {} & {} \\
1 \ar[r] & {{\cal F}(K(t))} \ar[r]^{\partial\qquad} & {H_{et}^1(K(t),H_K)} \ar[r] & {H_{et}^1(K(t),G_K)} \\
{} & {a} \ar@{|->}[r] & {\xi } \ar@{|->}[r] & {\ast} & {} }
$$
in which all the maps are canonical, the horizontal lines are exact sequences of pointed sets and
$ker(\eta)=*$ \
by Theorem
\ref{C-TOthm}.
Since
$
\xi
$
goes to the trivial element in
$
H_{et}^1(K(t),G_K)
$,
one concludes that
$
\eta(\tilde \zeta)= * .
$
Whence $\tilde \zeta = *$
by Theorem
\ref{C-TOthm}.
Thus there exists an element
$
\tilde a \in {\cal F}(K[t])
$
such that
$
\partial (\tilde a)= \tilde \xi.
$
The map
$
{\cal F}(K(t)) \to H_{et}^1(K(t),H_K)
$
is injective by Lemma
\ref{BoundaryInj}.
Thus
$
\epsilon(\tilde a)=a.
$
The map
$\mathcal F(K) \to \mathcal F(K[t])$
induced by the inclusion
$K \hra K[t]$
is surjective,
since the corresponding map
$C(K) \to C(K[t])$
is an isomorphism. Whence there exists an
$a_0 \in \mathcal F(K)$
such that its image in
$\mathcal F(K(t))$
coincides with the element
$a$.

\end{proof}

\begin{cor}
\label{TwoSpecializations}
Let $S \mapsto {\cal F}(S)$ be the functor defined in
(\ref{MainFunctor}).
Let
$$
s_0, s_1: {\cal F}_{nr, K[t]}(K(t)) \to {\cal F}(K)
$$
be the specialization maps at zero and at one
(at the primes (t) and (t-1)). Then $s_0=s_1$.
\end{cor}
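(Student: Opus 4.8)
The plan is to deduce the corollary formally from the homotopy invariance Theorem \ref{HomInvNonram}. Write $j\colon {\cal F}(K)\to {\cal F}_{nr,K[t]}(K(t))$ for the map induced by the $K$-algebra inclusion $K\hookrightarrow K[t]\subset K(t)$; its image does land in the unramified subgroup, since an element coming from $K$ is unramified at every height one prime of $K[t]$. Theorem \ref{HomInvNonram} asserts precisely that $j$ is surjective, and its proof already records that $j$ is injective (because $s_0\circ j=\mathrm{id}$). Hence $j$ is a bijection.

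Next I would check that $s_1$, exactly like $s_0$, is a left inverse of $j$. Given $a_0\in {\cal F}(K)$, the element $a_0$ regarded in ${\cal F}(K[t]_{(t-1)})$ via $K\to K[t]_{(t-1)}$ is a lift of $j(a_0)\in {\cal F}_{nr,K[t]}(K(t))$ to ${\cal F}(K[t]_{(t-1)})$, so by the recipe of Definition \ref{SpecializationDef} one has $s_1(j(a_0))=ev_{(t-1)}(a_0)$, which equals $a_0$ because the residue field of $K[t]$ at $(t-1)$ is canonically $K$ and the composite $K\to K[t]_{(t-1)}\to K$ is the identity. The same computation with $(t-1)$ replaced by $(t)$ gives $s_0\circ j=\mathrm{id}$ (this is also the computation invoked in the proof of Theorem \ref{HomInvNonram}). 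Thus $s_0\circ j=s_1\circ j=\mathrm{id}_{{\cal F}(K)}$.

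Finally, since $j$ is bijective and both $s_0$ and $s_1$ are left inverses of $j$, composing on the right with $j^{-1}$ yields $s_0=j^{-1}=s_1$, which is the assertion. The only genuinely substantive input is Theorem \ref{HomInvNonram}; once surjectivity of $j$ is available, the argument is purely formal, so I do not anticipate a real obstacle here beyond being careful that the well-definedness provided by Theorem \ref{NisnevichCor} is cited correctly, so that $s_0$ and $s_1$ are honest maps on ${\cal F}_{nr,K[t]}(K(t))$ to begin with.
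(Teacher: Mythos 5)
Your proposal is correct and is precisely the argument the paper intends when it says the corollary is "an obvious consequence of Theorem \ref{HomInvNonram}": the theorem makes $j:{\cal F}(K)\to{\cal F}_{nr,K[t]}(K(t))$ a bijection, both $s_0$ and $s_1$ are left inverses of $j$ by the evaluation recipe of Definition \ref{SpecializationDef}, hence both equal $j^{-1}$. Nothing further is needed.
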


\begin{proof}
It is an obvious consequence of Theorem
\ref{HomInvNonram}.
\end{proof}

\section{Equating group schemes}
\label{SecEquatingGroups} The following Theorem is a
straightforward analog of \cite[Prop.7.1]{OP}

\begin{thm}
\label{PropEquatingGroups}
Let $S$ be a regular semi-local
irreducible scheme such that the residue fields at all its closed points are {\bf finite}.
Let
$\mu_1: G_1\to C_1$ and
$\mu_2: G_2\to C_2$
be two smooth $S$-group scheme morphisms with tori $C_1$ and $C_2$.
Assume as well that $G_1$ and $G_2$ are reductive $S$-group schemes
which are forms of each other. Assume that $C_1$ and $C_2$ are forms of each other.
Let $T \subset S$ be a connected non-empty closed
sub-scheme of $S$, and
$\varphi: G_1|_T \to G_2|_T$,
$\psi: C_1|_T \to C_2|_T$
be $S$-group scheme isomorphisms such that
$(\mu_2|_{T}) \circ \varphi = \psi \circ (\mu_1|_{T})$.
Then there exists a finite \'{e}tale morphism
$\tilde S \xra{\pi} S$ together with its section $\delta: T \to
\tilde S$ over $T$ and $\tilde S$-group scheme isomorphisms
$\Phi: \pi^*{G_1} \to \pi^*{G_2}$
and
$\Psi: \pi^*{C_1} \to \pi^*{C_2}$
such that
\begin{itemize}
\item[(i)]
$\delta^*(\Phi)=\varphi$,
\item[(ii)]
$\delta^*(\Psi)=\psi$,
\item[(iii)]
$\pi^*(\mu_2) \circ \Phi = \Psi \circ \pi^*(\mu_1): \pi^*(G_1) \to \pi^*(C_2)$
\item[(iv)]
the scheme $\tilde S$ is irreducible.
\end{itemize}
\end{thm}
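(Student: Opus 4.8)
The plan is to reinterpret the sought isomorphisms $(\Phi,\Psi)$ as a point of a suitable isomorphism scheme, split off the ``component group'' part of its structure group over a finite \'etale cover, and then produce a point of the remaining (connected, adjoint) torsor over a further finite \'etale cover by a slicing argument of Bertini type adapted to finite residue fields. \emph{Step 1 (the isomorphism scheme).} First I would introduce the $S$-functor $\mathbf{I}$ carrying an $S$-scheme $S'$ to the set of pairs $(\Phi,\Psi)$ of $S'$-group scheme isomorphisms $\Phi\colon G_1|_{S'}\xra{\sim}G_2|_{S'}$, $\Psi\colon C_1|_{S'}\xra{\sim}C_2|_{S'}$ with $\mu_2|_{S'}\circ\Phi=\Psi\circ\mu_1|_{S'}$. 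As $\mu_i$ is faithfully flat, $\Psi$ is determined by $\Phi$, so $\mathbf{I}$ is the closed subscheme of the smooth affine $S$-scheme $\underline{\mathrm{Isom}}(G_1,G_2)$ of those $\Phi$ carrying $H=\ker\mu_1$ onto $\ker\mu_2$ (for reductive kernels $\Phi(H)\subseteq\ker\mu_2$ already forces equality). From the hypotheses that $G_1,G_2$ and $C_1,C_2$ are forms of each other, compatibly over $T$, one checks that $\mathbf{I}$ is an fppf-torsor under the $S$-group scheme $\A=\underline{\mathrm{Aut}}(\mu_1)$; hence $\mathbf{I}\to S$ is smooth affine. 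Since $H$ is normal in $G_1$, every inner automorphism preserves $H$, so $G_1^{\mathrm{ad}}\subseteq\A$, whence $\A^{0}=G_1^{\mathrm{ad}}$ is adjoint reductive and $\pi_0(\A)=\A/G_1^{\mathrm{ad}}$ is finite \'etale over $S$. The given pair $(\varphi,\psi)$ is a point $s_T\in\mathbf{I}(T)$, and conversely any point $\tilde s\in\mathbf{I}(\tilde S)$ is exactly a compatible pair $(\Phi,\Psi)$ over $\tilde S$ as in (iii), with $\delta^{*}\tilde s=(\delta^{*}\Phi,\delta^{*}\Psi)$. Thus the theorem becomes: produce a finite \'etale \emph{irreducible} $\pi\colon\tilde S\to S$, a section $\delta\colon T\to\tilde S$ over $T$, and a point $\tilde s\in\mathbf{I}(\tilde S)$ with $\delta^{*}\tilde s=s_T$.

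\emph{Step 2 (splitting off $\pi_0$).} Next I would pass to $\mathbf{J}=\mathbf{I}/\A^{0}$, a torsor under the finite \'etale group $\pi_0(\A)$, hence a finite \'etale $S$-scheme. The point $s_T$ maps into a single connected component $\tilde S_1\subset\mathbf{J}$; being connected and finite \'etale over the irreducible normal $S$, $\tilde S_1$ is irreducible, and finite \'etale and surjective over $S$, and it carries a section $\delta_1\colon T\to\tilde S_1$ over $T$. Pulling back along $\tilde S_1\hookrightarrow\mathbf{J}$, the scheme $P:=\mathbf{I}\times_{\mathbf{J}}\tilde S_1$ is a torsor under $G_1^{\mathrm{ad}}|_{\tilde S_1}$, i.e.\ a smooth affine $\tilde S_1$-scheme with geometrically connected fibres, carrying a $T$-point $p_T$ over $\delta_1$; and a $\tilde S$-point of $P$ (for $\tilde S\to\tilde S_1$) gives, by composition with $\tilde S_1\to S$, a $\tilde S$-point of $\mathbf{I}$. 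It remains to find a finite \'etale irreducible $\tilde S\to\tilde S_1$ with a section over $T$ and a point of $P$ restricting to $p_T$ over it. Note that for every closed point $x$ of $\tilde S_1$ the residue field $\kappa(x)$ is finite and $P_x$ is a torsor under the \emph{connected} adjoint group $G_1^{\mathrm{ad}}|_{\kappa(x)}$, so $P_x(\kappa(x))\neq\emptyset$ by Lang's theorem.

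\emph{Step 3 (slicing).} This last assertion is the geometric core and is the analogue of \cite[Prop.~7.1]{OP}. Fix a rational point $p_x\in P_x(\kappa(x))$ over each closed point $x$ of $\tilde S_1$, choosing $p_x=p_T(x)$ whenever $x\in T$. Since $P\to\tilde S_1$ is smooth of relative dimension $n=\dim G_1^{\mathrm{ad}}$, Zariski-locally around the finite set $p_T(T)\cup\{p_x\}$ it is \'etale over $\Aff^{n}_{\tilde S_1}$; I would then cut $P$ by $n$ suitably chosen global functions vanishing on $p_T(T)$ and transverse to the fibres at each $p_x$, obtaining a closed subscheme $Y\subset P$, quasi-finite over $\tilde S_1$, and unramified and flat --- hence \'etale --- over $\tilde S_1$ along $p_T(T)$ and at each $p_x$. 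Spreading $Y$ out, via Zariski's main theorem, to a finite $\tilde S_1$-scheme $\bar Y$, and letting $\tilde S$ be a connected component of the clopen union of components of $\bar Y$ meeting $p_T(T)$, one arranges --- and this is exactly where the choice of slicing functions matters --- that $\tilde S\to\tilde S_1$ is finite \'etale, surjective and irreducible. It carries the tautological section $T\to\tilde S$ over $T$ and the tautological $\tilde S$-point of $P$ restricting to $p_T$; composing with $\tilde S_1\to S$ and unwinding Steps 1--2 yields $\pi,\delta,\Phi,\Psi$, and properties (i)--(iv) hold by construction.

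\emph{The main obstacle} is Step 3 over a \textbf{finite} field: the usual general-position/Bertini arguments that would make the slice $Y\to\tilde S_1$ finite \'etale \emph{and surjective} rely on infinitely many rational points in the fibres. One circumvents this precisely by exploiting Lang's theorem --- which is why the reduction in Step 2 to the connected group $G_1^{\mathrm{ad}}$ is essential --- together with a more careful construction of the slicing functions, if necessary after an auxiliary finite extension of the ground field $k$ followed by descent, so that the resulting finite \'etale neighbourhood still surjects onto $\tilde S_1$. This extra difficulty, absent in the infinite-field treatment of \cite{Pa2}, is exactly what the appendix argument must handle.
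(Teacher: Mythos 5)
Your reformulation in Steps 1--2 is a genuinely different organization from the paper's, which instead splits $G_r$ through the central isogeny $Rad(G_r)\times_S der(G_r)\to G_r$, equates the semisimple parts (quoting \cite[Prop.5.1]{Pan1} as Proposition \ref{PropEquatingGroups3}), the radicals and the kernels $Z_r$ separately, and recovers all the compatibilities with the $\mu_r$ from rigidity of morphisms of multiplicative-type group schemes (Proposition \ref{PropEquatingGroups5}). Two assertions in your Steps 1--2 are false or unjustified as stated, though repairable: (a) for a reductive, non-semisimple $G_1$ the component group $\pi_0(\underline{\mathrm{Aut}}(\mu_1))$ is a twisted form of an arithmetic group such as $GL_n(\mathbb Z)$ (already for $G_1=C_1=\mathbb G_m^2$, $\mu_1=\mathrm{id}$), hence is \emph{not} finite \'etale over $S$ and $\mathbf J$ is not a finite \'etale $S$-scheme; to see that the particular component $\tilde S_1$ met by $s_T$ is finite over $S$ you must invoke isotriviality, i.e.\ that any isomorphism of the relevant forms over a geometric point is already defined over a finite \'etale cover; (b) the torsor property of $\mathbf I$ over all of $S$ (equivalently, surjectivity of $\mathbf I\to S$) does not follow from the hypotheses that the $G_i$ and the $C_i$ are separately forms of each other --- the diagrams $\mu_1,\mu_2$ could fail to be locally isomorphic away from $T$ --- so you must propagate the compatibility from $T$ to the whole of the connected $S$, which is again a rigidity argument on the induced maps of coradicals that you do not supply.

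The genuine gap is Step 3. Producing a finite \'etale, surjective, irreducible $\tilde S\to\tilde S_1$ carrying a section over $T$ together with a point of the $G_1^{\mathrm{ad}}$-torsor $P$ restricting to $p_T$ is precisely the hard content of the statement over a finite field, and your text does not prove it: the sentence about ``a more careful construction of the slicing functions, if necessary after an auxiliary finite extension of the ground field followed by descent'' restates the difficulty rather than resolving it. Concretely, after choosing $n$ slicing functions vanishing on $p_T(T)$ one must guarantee simultaneously that the slice $Y$ is \'etale over $\tilde S_1$ at points lying over \emph{every} closed point of $\tilde S_1$ (not only those of $T$), that the component one keeps is finite \emph{and surjective} over $\tilde S_1$ (quasi-finiteness plus Zariski's main theorem does not make the added points of the compactification \'etale, and a quasi-finite slice need not dominate $\tilde S_1$), and that one can keep it irreducible while retaining the section over $T$. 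Over finite residue fields the general-position argument that secures all of this in \cite[Prop.5.1]{PSV} is unavailable; Lang's theorem only supplies the rational points $p_x$ and says nothing about transversality of a globally chosen slice at all of them at once; and an auxiliary extension of $k$ followed by descent destroys the section $\delta$ over $T$ unless the extension is built into the cover itself. This is exactly the content of \cite[Prop.5.1]{Pan1}, which the paper treats as an external input for the semisimple case; your argument would have to reprove it, and as written it does not.
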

We refer to \cite[Prop.5.1]{PSV} for the proof of a slightly weaker
statement. The proof of the Theorem is done in the same
style with some additional technicalities (see Appendix).

\section{Nice triples}
\label{NiceTriples}
We study in the present Section certain packages of geometric data and morphisms of that
packages. The concept of "nice triples" is very closed to the one of "standard triples "
\cite[Defn.4.1]{Vo} and is inspired by the latter one. Let
$k$ be {\bf a finite field}, $X/k$ be a smooth geometrically irreducible affine variety,
$x_1,x_2, \dots ,x_n \in X$ be {\bf a family of its closed points}.
Let
$\mathcal O_{X, \{x_1,x_2,\dots,x_n\}}$
be the respecting semi-local ring.
\begin{defn}
\label{DefnNiceTriple}
Let
$U:= \text{Spec}(\mathcal O_{X, \{x_1,x_2,\dots,x_n\}})$.
A nice triple over $U$
consists of the following family of data:
\begin{itemize}
\item[(i)]
a smooth morphism
$q_U: \mathcal X \to U$, where $\mathcal X$ is an irreducible scheme,
\item[(iii)]
an element
$f \in \Gamma(\mathcal X, \mathcal O_{\mathcal X})$
\item[(ii)]
a section
$\Delta$
of the morphism
$q_U$.
\end{itemize}
These data must satisfy the following conditions:
\begin{itemize}
\item[(a)]
each component of each fibre of the morphism $q_U$ has dimension one,
\item[(b)]
the
$\Gamma(\mathcal X, \mathcal O_{\mathcal X})/f\cdot\Gamma(\mathcal X, \mathcal O_{\mathcal X})$
is a finite
$\Gamma(U, \mathcal O_{U})=O_{X, \{x_1,x_2,\dots,x_n\}}$-module,
\item[(c)]
there exists a finite surjective $U$-morphism
$\Pi: \mathcal X \to \Aff^1 \times U$,
\item[(d)]
$\Delta^*(f) \neq 0 \in \Gamma(U, \mathcal O_{U})$.
\end{itemize}

A morphism between two nice triples
$(q^{\prime}_U: \mathcal X^{\prime} \to U, f^{\prime}, \Delta^{\prime}) \to
(q: \mathcal X \to U, f, \Delta)$
over $U$
is an \'{e}tale morphism of $U$-schemes
$\theta: \mathcal X^{\prime} \to \mathcal X$
such that
\begin{itemize}
\item[(1)]
$q^{\prime}_U = q_U \circ \theta$,
\item[(2)]
$f^{\prime} = \theta^{*}(f)\cdot g^{\prime}$
for an element
$g^{\prime} \in \Gamma(\mathcal X^{\prime}, \mathcal O_{\mathcal X^{\prime}})$ \\
(in particular,
$\Gamma(\mathcal X^{\prime} , \mathcal O_{\mathcal X^{\prime} })/\theta^*(f)\cdot\Gamma(\mathcal X^{\prime} , \mathcal O_{\mathcal X^{\prime} })$
is a finite
$O_{X, \{x_1,x_2,\dots,x_n\}}$-module ),
\item[(3)]
$\Delta = \theta \circ \Delta^{\prime}$.
\end{itemize}
(Stress that there are no conditions concerning an interaction of $\Pi^{\prime}$ and $\Pi$ ).
\end{defn}

If
$U$ as in Definition
\ref{DefnNiceTriple}
then for any $U$-scheme $V$ and any closed point $u \in U$
set $V_u=u\times_U V$.
For a finite set $A$ denote $\sharp A$ the cardinality of $A$.

\begin{thm}
\label{ThmEquatingGroups}
Let $U$ be as in Definition
\ref{DefnNiceTriple}.
Let
$(q: \mathcal X \to U, f, \Delta)$
be a nice triple over $U$.
Let $G_{\mathcal X}$ be a reductive
$\mathcal X$-group scheme and
$G_U:= \Delta^*(G_{\mathcal X})$
and
$G_{\text{const}}$
be the pull-back of $G_U$ to
$\mathcal X$.
Let $C_{\mathcal X}$ be an
$\mathcal X$-tori and
$C_U:= \Delta^*(C_{\mathcal X})$
and
$C_{\text{const}}$
be the pull-back of $C_U$ to
$\mathcal X$.
Let
$\mu_{\mathcal X}: G_{\mathcal X} \to C_{\mathcal X}$
be an $\mathcal X$-group scheme morphism smooth as a scheme morphism.
Let
$\mu_U = \Delta^*(\mu_{\mathcal X})$
and
$\mu_{\const}: G_{\const} \to C_{\const}$
be the the pull-back of $\mu_U$ to
$\mathcal X$.

Then
there exist a morphism
$\theta: (q^{\prime}: \mathcal X^{\prime} \to U, f^{\prime}, \Delta^{\prime}) \to
(q: \mathcal X \to U, f, \Delta)$
between nice triples over $U$ and isomorphisms
$$\Phi: \theta^*(G_{\text{const}}) \to \theta^*(G_{\mathcal X}), \ \Psi: \theta^*(C_{\text{const}}) \to \theta^*(C_{\mathcal X})$$
of
$\mathcal X^{\prime}$-group schemes such that
\begin{itemize}
\item[\rm{(1)}]
$(\Delta^{\prime})^*(\Phi)= id_{G_U}$,
$(\Delta^{\prime})^*(\Phi)= id_{G_U}$
and
\begin{equation}
\label{PhiAndPsiRelation}
\theta^*(\mu_{\mathcal X}) \circ \Phi = \Psi \circ \theta^*(\mu_{const})
\end{equation}
\item[\rm{(2)}]
for the closed sub-scheme $\mathcal Z^{\prime}$ of $\mathcal X^{\prime}$ defined by $\{f^{\prime}=0\}$
and any closed point $u \in U$ the point $\Delta^{\prime}(u) \in \mathcal Z^{\prime}_u$ is the only
$k(u)$-rational point of $\mathcal Z^{\prime}_u$,
\item[\rm{(3)}]
for any closed point $u \in U$ and any integer $r \geq 1$ and for $\mathcal Z^{\prime}$ as in \rm{(2)} one has
$$\sharp\{z \in \mathcal Z^{\prime}_u| \text{deg}[z:u]=r \} \leq \ \sharp\{x \in \Aff^1_u| \text{deg}[z:u]=r \}$$
\end{itemize}
\end{thm}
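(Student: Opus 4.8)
The plan is to combine the ``equating'' Theorem \ref{PropEquatingGroups} with a geometric refinement of the nice triple $(q\colon\mathcal X\to U, f,\Delta)$ so that the closed subscheme $\mathcal Z=\{f=0\}$ becomes ``small'' in the sense of (2) and (3). First I would apply Theorem \ref{PropEquatingGroups} with $S=\mathcal X$, $T=\Delta(U)$ (which is connected since $U$ is irreducible semi-local, hence irreducible), $\mu_1=\mu_{\const}$, $\mu_2=\mu_{\mathcal X}$, $G_1=G_{\const}$, $G_2=G_{\mathcal X}$, $C_1=C_{\const}$, $C_2=C_{\mathcal X}$: indeed $G_{\const}$ and $G_{\mathcal X}$ are forms of each other (both become the pull-back of $G_U$ étale-locally), similarly for the tori, and over $T$ the identifications $\varphi=\id$, $\psi=\id$ satisfy the required compatibility with $\mu$ by construction of the constant objects. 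This produces a finite étale $\pi\colon\widetilde{\mathcal X}\to\mathcal X$, irreducible, with a section $\delta\colon\Delta(U)\to\widetilde{\mathcal X}$ over $T$, and isomorphisms $\Phi,\Psi$ pulling back to the identity along $\delta$ and satisfying \eqref{PhiAndPsiRelation}. The composite $\mathcal X':=\widetilde{\mathcal X}$, $f':=\pi^*(f)$, $\Delta':=\delta\circ(\text{the identification }U\cong\Delta(U))$, $\theta:=\pi$ is then a candidate morphism of nice triples: conditions (1), (2), (a), (d) of Definition \ref{DefnNiceTriple} are immediate, (b) holds because $\pi$ is finite and (c) because the composition of finite surjective morphisms $\widetilde{\mathcal X}\to\mathcal X\to\Aff^1\times U$ is finite surjective.

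The substantive content is forcing conditions (2) and (3) on $\mathcal Z'=\{f'=0\}$, and for this I would follow the standard strategy of shrinking $\mathcal X$ around the section before (or after) applying Theorem \ref{PropEquatingGroups}: replace $\mathcal X$ by an open (or étale) neighbourhood of $\Delta(U)$ on which $\mathcal Z$ is finite over $U$ (possible by condition (b): $\Gamma(\mathcal X,\mathcal O)/f$ is a finite $\mathcal O_U$-module, so $\mathcal Z\to U$ is finite), and then use a further finite étale base change / choice of the finite map $\Pi\colon\mathcal X\to\Aff^1\times U$ to arrange that over each closed point $u\in U$ the fibre $\mathcal Z_u$ has exactly one $k(u)$-rational point, namely $\Delta'(u)$, and that the number of closed points of $\mathcal Z'_u$ of each residue degree $r$ does not exceed that of $\Aff^1_u$. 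Since $\Aff^1_{k(u)}$ over a finite field $k(u)=\mathbb F_q$ has exactly $q$ rational points and, for each $r\geq 1$, a fixed positive number of closed points of degree $r$ (governed by the necessary-condition count $\#\{z:\deg r\}=\tfrac1r\sum_{d\mid r}\mu(r/d)q^d$), the inequality in (3) is a genuine combinatorial constraint that one enforces by making $\mathcal Z'$ a subscheme of $\Aff^1\times U$ via $\Pi'$ after a ``general position'' modification; concretely one replaces $f'$ by $f'$ times a unit and adjusts $\Pi'$ so that $\mathcal Z'\hookrightarrow\Aff^1\times U$ identifies $\mathcal Z'_u$ with a closed subscheme of $\Aff^1_u$, whence the counts in (3) are automatic and the single-rational-point condition (2) can be imposed by a linear change of coordinate on $\Aff^1$.

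The main obstacle I anticipate is precisely the simultaneous control of (2) and (3): Theorem \ref{PropEquatingGroups} is used as a black box and imposes its own finite étale cover $\widetilde{\mathcal X}\to\mathcal X$, so one must check that this cover can be chosen compatibly with (or followed by) the geometric shrinking that makes $\mathcal Z'$ embed into $\Aff^1\times U$ — in other words that ``equating the group schemes'' and ``putting the zero locus into $\Aff^1$'' do not interfere. The resolution is the usual one in this circle of ideas (cf.\ the use of nice/standard triples in \cite{Vo}, \cite{Pan1}): one first performs the geometric normalization of the triple so that $\mathcal Z$ already lies nicely over $\Aff^1\times U$ with the required fibre-counting bounds, \emph{then} applies Theorem \ref{PropEquatingGroups}, and finally observes that the finite étale morphism $\pi$ it produces preserves these bounds because $\pi$ restricts to an isomorphism over a neighbourhood of $\Delta(U)$ in the relevant fibres (by the section $\delta$ and condition (2) of Definition \ref{DefnNiceTriple} applied to $\theta$), so that $\mathcal Z'_u$ is built from $\mathcal Z_u$ by a finite étale cover that does not increase residue-degree-$r$ point counts after the coordinate adjustment. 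The remaining verifications — that $\theta$ is a bona fide morphism of nice triples and that $(\Delta')^*\Phi=\id$, $(\Delta')^*\Psi=\id$, \eqref{PhiAndPsiRelation} hold — are then read off directly from the output of Theorem \ref{PropEquatingGroups}.
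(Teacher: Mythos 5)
There are two genuine gaps in your plan; the overall architecture (an equating step plus a geometric step controlling $\{f'=0\}$) is the right one, but both of your key mechanisms fail.

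First, you apply Theorem \ref{PropEquatingGroups} with $S=\mathcal X$. That theorem requires $S$ to be a regular \emph{semi-local} irreducible scheme whose closed points have finite residue fields, and $\mathcal X$, being a smooth $U$-scheme with one-dimensional fibres, is not semi-local. The paper instead forms $\mathcal Y=\Pi^{-1}(\Pi(\mathcal Z_{red}\cup\Delta(U)))$, which is finite over $U$, takes $S=\Spec(\mathcal O_{\mathcal X,y_1,\dots,y_m})$ for the finitely many closed points $y_i$ of $\mathcal Y$, applies Theorem \ref{PropEquatingGroups} to this genuinely semi-local $S$ with $T=\Delta(U)$, and only afterwards spreads the resulting finite \'{e}tale cover and the isomorphisms $\Phi,\Psi$ out to an open neighbourhood $\mathcal V$ of the $y_i$, recovering a finite surjective morphism to $\Aff^1\times U$ on a smaller open $\mathcal W\subseteq\mathcal V$ by Lemma \ref{Lemma_8_2}. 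This localize-and-spread-out step is not optional and is missing from your sketch.

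Second, and more seriously, your argument for conditions (2) and (3) does not work. The closed points of $\mathcal Z_u$ and their residue degrees are intrinsic to $f$ and $u$: multiplying $f'$ by a unit, changing the coordinate on $\Aff^1$, or adjusting $\Pi$ does not alter them, and over a finite field $k(u)$ the fibre $\mathcal Z_u$ may genuinely have more than one $k(u)$-rational point, or more degree-$r$ points than $\Aff^1_u$ has. So (2) and (3) must be \emph{created}, not observed. Your proposal to arrange an embedding $\mathcal Z'_u\hookrightarrow\Aff^1_u$ and then read off (3) is circular — such an embedding exists only if the counting bound (3) already holds (this is exactly the role of hypotheses (2) and (3) in Theorem \ref{ElementaryNisSquare}, whose conclusion (a) is the embedding) — and it would not give (2) in any case, since $\Aff^1_{k(u)}$ has $\sharp k(u)>1$ rational points. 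The only way to change the residue degrees is a nontrivial finite \'{e}tale cover, and this is precisely what Lemma \ref{SmallAmountOfPoints} supplies: a further cover $S''\to S'$ with a section over $T'$, after which $\delta'(u')$ is the unique $k(u)$-rational point of the fibre and the degree-$r$ counts are dominated by those of $\Aff^1_u$. Note also that this correction is performed \emph{after} the equating cover $S'\to S$; your claim that the equating cover ``does not increase residue-degree-$r$ point counts'' is false in general, since a finite \'{e}tale cover of degree $d$ can split a degree-$r$ point into as many as $d$ points all still of degree $r$. That is why the point-count step must come last in the chain $S''\to S'\to S$.
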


\begin{proof}[Proof of Theorem \ref{ThmEquatingGroups}]
We can start by
almost literally repeating arguments from the proof of
\cite[Lemma
8.1]{OP1}, which involve the following purely geometric lemma
\cite[Lemma 8.2]{OP1}.
\par
For reader's convenience below we state that Lemma adapting
notation to the ones of Section \ref{NiceTriples}.
Namely, let $U$ be as in
the Theorem and
$(q: \mathcal X \to U,f,\Delta)$ be the nice triple over $U$.
By the definition
of a nice triple there exists a finite surjective morphism
$\Pi:\mathcal X\to\Aff^1\times U$ of $U$-schemes.
\begin{lem}
\label{Lemma_8_2} Let $\mathcal Y$ be a closed nonempty sub-scheme
of $\mathcal X$, finite over $U$. Let $\mathcal V$ be an open
subset of $\mathcal X$ containing $\Pi^{-1}(\Pi(\mathcal Y))$. There
exists an open set $\mathcal W \subseteq \mathcal V$ still
containing $q_U^{-1}(q_U(\mathcal Y))$ and endowed with a finite
surjective morphism
$\Pi^*: \mathcal W\to\Aff^1\times U$ {\rm(}in general
$\neq\Pi${\rm)}.
\end{lem}
Let $\Pi:\mathcal X\to\Aff^1\times U$ be the above finite
surjective $U$-morphism.
The following diagram summarises the situation:
$$ \xymatrix{
{}&\mathcal Z \ar[d]&{}\\
{\mathcal X - \mathcal Z\ }\ar@{^{(}->}@<-2pt>[r]&\mathcal X\ar@<2pt>[d]^{q_U}\ar[r]^>>>>{\Pi}& \Aff^1 \times U\\
{}& U\ar@<2pt>[u]^{\Delta}&{} } $$
\noindent
Here $\mathcal Z$ is the closed sub-scheme defined by the equation
$f=0$. By assumption, $\mathcal Z$ is finite over $U$. Let
$\mathcal Y=\Pi^{-1}(\Pi(\mathcal Z_{red} \cup \Delta(U)))$. Since
$\mathcal Z$ and $\Delta(U)$ are both finite over $U$ and since
$\Pi$ is a finite morphism of $U$-schemes, $\mathcal Y$ is also
finite over $U$. Denote by $y_1,\dots,y_m$ its closed points and
let $S=\text{Spec}(\mathcal O_{\mathcal X,y_1,\dots,y_m})$. Set
$T=\Delta(U)\subseteq S$.
Further, let
$G_{\mathcal X}$,
$G_U$,
$G_{\const}$,
$C_{\mathcal X}$,
$C_U$,
$C_{\const}$,
$\mu_{\mathcal X}$,
$\mu_U$,
$\mu_{\const}$
be as in the hypotheses of Theorem
\ref{ThmEquatingGroups}.
Finally,
let
$\varphi:G_{\const}|_T \to G_{\mathcal X}|_T$
and
$\psi:C_{\const}|_T \to C_{\mathcal X}|_T$
be the canonical
isomorphisms.
Clearly,
$(\mu_{\mathcal X}|_{T}) \circ \varphi = \psi \circ (\mu_{\const}|_{T})$
Recall that by assumption $\mathcal X$ is $U$-smooth and irreducible,
and thus $S$ is regular and irreducible.
\par
By Theorem
\ref{PropEquatingGroups}
there exists a finite
\'etale morphism $\theta_0:S^{\prime}\to S$, a section
$\delta:T\to S^{\prime}$ of $\theta_0$ over $T$
and isomorphisms
$$ \Phi_0:\theta^*_0(G_{\const,S})\to\theta^*_0(G_{\mathcal X}|_S) \ \text{and} \
\Psi_0:\theta^*_0(C_{\const,S})\to\theta^*_0(C_{\mathcal X}|_S)$$
\noindent
such that
$S^{\prime}$ is irreducible,
$\delta^*\Phi_0=\varphi$,
$\delta^*\Psi_0=\psi$
and
$$\theta^*_0(\mu_{\mathcal X}) \circ \Phi_0 = \Psi_0 \circ \theta^*_0(\mu_{\const}): \theta^*_0(G_{\const}) \to \theta^*_0(C_{\mathcal X}).$$

Let $p=q_U \circ \theta_0: S^{\prime} \to U$.
By Lemma \ref{SmallAmountOfPoints}
there exists a finite \'{e}tale morphism
$\rho: S^{\prime\prime} \to S^{\prime}$ (with an irreducible scheme $S^{\prime\prime}$)
and a section
$\delta^{\prime}: T^{\prime} \to S^{\prime\prime}$
of $\rho$ over $T^{\prime}$
such that the properties (1) and (2) from
Lemma
\ref{SmallAmountOfPoints} hold.
Set
$\delta^{\prime\prime}=\delta^{\prime} \circ \delta: T \to S^{\prime\prime}$
and
$\theta^{\prime\prime}_0=\theta_0 \circ \rho: S^{\prime\prime} \to U$.
We are also given with the
$S^{\prime\prime}$-group scheme isomorphisms
$$\Phi^{\prime\prime}_0=\rho^*(\Phi_0): (\theta^{\prime\prime}_0)^*(G_{\const,S}) \to (\theta^{\prime\prime}_0)^*(G_{\mathcal X}|_S) \ \text{and} \
\Psi^{\prime\prime}_0=\rho^*(\Psi_0): (\theta^{\prime\prime}_0)^*(C_{\const,S}) \to (\theta^{\prime\prime}_0)^*(C_{\mathcal X}|_S)$$
such that
$(\delta^{\prime\prime})^*(\Phi^{\prime\prime}_0)=\varphi$,
$(\delta^{\prime\prime})^*(\Psi^{\prime\prime}_0)=\psi$
and
$$(\theta^{\prime\prime}_0)^*(\mu_{\mathcal X}) \circ \Phi^{\prime\prime}_0 =
\Psi^{\prime\prime}_0 \circ (\theta^{\prime\prime}_0)^*(\mu_{\const}):
(\theta^{\prime\prime}_0)^*(G_{\const}) \to (\theta^{\prime\prime}_0)^*(C_{\mathcal X}).$$

We can extend these data to a
neighborhood $\mathcal V$ of $\{y_1,\dots,y_n\}$ and get the
diagram
\begin{equation}
\xymatrix{
     {}  &  S^{\prime\prime} \ar[d]^{\theta^{\prime\prime}_0} \ar@{^{(}->}@<-2pt>[r]  & {\mathcal V}^{\prime\prime}  \ar[d]_{\theta} &\\
     T \ar@{^{(}->}@<-2pt>[r] \ar[ur]^{
\delta^{\prime\prime}} & S \ar@{^{(}->}@<-2pt>[r]  &   \mathcal V \ar@{^{(}->}@<-2pt>[r]  &  \mathcal X &\\
    }
\end{equation}
\noindent
where
$\theta: {\mathcal V}^{\prime\prime}\to\mathcal V$
finite \'etale and
${\mathcal V}^{\prime\prime}$
is irreducible, and
isomorphisms
$$\Phi:\theta^*(G_{\const})\to\theta^*(G_{\mathcal X}) \ \text{and} \ \Psi:\theta^*(C_{\const})\to\theta^*(C_{\mathcal X})$$
such that
$$\theta^*(\mu_{\mathcal X}) \circ \Phi =
\Psi \circ \theta^*(\mu_{\const}):
\theta^*(G_{\const}) \to \theta^*(C_{\mathcal X}).$$

Since $T$ isomorphically projects onto $U$, it is still closed
viewed as a sub-scheme of $\mathcal V$. Note that since $\mathcal
Y$ is semi-local and $\mathcal V$ contains all of its closed
points, $\mathcal V$ contains $\Pi^{-1}(\Pi(\mathcal Y))=\mathcal
Y$. By Lemma \ref{Lemma_8_2} there exists an open subset $\mathcal
W\subseteq\mathcal V$ containing $\mathcal Y$ and endowed with a
finite surjective $U$-morphism $\Pi^*:\mathcal W\to\Aff^1\times
U$.
\par
Let $\mathcal X^{\prime}=\theta^{-1}(\mathcal W) \subseteq {\mathcal V}^{\prime\prime}$,
$f^{\prime}=\theta^{*}(f)$, $q^{\prime}_U=q_U\circ\theta$, and let
$\Delta^{\prime}:U\to\mathcal X^{\prime}$ be the section of
$q^{\prime}_U$ obtained as the composition of $\delta^{\prime\prime}$ with
$\Delta$. We claim that the triple
$(\mathcal X^{\prime},f^{\prime},\Delta^{\prime})$ is a nice triple over $U$. Let us
verify this. Firstly, the structure morphism
$q^{\prime}_U:\mathcal X^{\prime} \to U$ coincides with the
composition
$$
\mathcal X^{\prime}\xra{\theta}
\mathcal W\hra\mathcal X\xra{q_U} U.
$$
\noindent
Thus, it is smooth.
The scheme
$\mathcal X^{\prime}$
is irreducible as an open sub-scheme
of irreducible
${\mathcal V}^{\prime\prime}$.
The element $f^{\prime}$ belongs to the ring
$\Gamma(\mathcal X^{\prime},\mathcal O_{\mathcal X^{\prime}})$,
the morphism $\Delta^{\prime}$ is a section of $q^{\prime}_U$.
Each component of each fibre of the morphism $q_U$ has dimension
one, the morphism $\mathcal X^{\prime}\xra{\theta}\mathcal
W\hra\mathcal X$ is \'{e}tale. Thus, each component of each fibre
of the morphism $q^{\prime}_U$ is also of dimension one. Since
$\{f=0\} \subset {\mathcal W}$ and $\theta: \mathcal X^{\prime}
\to {\mathcal W}$ is finite,
the scheme
$\{f^{\prime}=0\}$ is finite over
$\{f=0\}$ and hence  also over $U$. In other words, the $\mathcal
O$-module $\Gamma(\mathcal X^{\prime},\mathcal O_{\mathcal
X^{\prime}})/f^{\prime}\cdot\Gamma(\mathcal X^{\prime},\mathcal
O_{\mathcal X^{\prime}})$ is finite. The morphism $\theta:
\mathcal X^{\prime}\to\mathcal W$ is finite and surjective.
We have constructed above in
Lemma \ref{Lemma_8_2}
the finite surjective morphism
$\Pi^*:\mathcal W\to\Aff^1\times U$. It follows that
$\Pi^*\circ\theta:\mathcal X^{\prime}\to\Aff^1\times U$ is finite
and surjective.
\par
Clearly, the \'{e}tale morphism
$\theta:\mathcal X^{\prime} \to\mathcal X$
is a morphism between the nice triples over $U$,
with $g^{\prime}=1$.
\par
Denote the restriction of $\Phi$ to $\mathcal X^{\prime}$ simply
by $\Phi$
and denote
the restriction of $\Psi$ to $\mathcal X^{\prime}$ simply
by $\Psi$. The equalities
$(\Delta^{\prime})^*{\Phi}=\id_{G_U}$
and
$(\Delta^{\prime})^*{\Psi}=\id_{C_U}$
hold by the very construction of the isomorphisms
$\Phi$ and $\Psi$.

All the closed points of the sub-scheme
$\{f=0\} \subset \mathcal X$
are in
$S$.
The morphism
$\theta$ is finite and
$\theta^{-1}(S)=S^{\prime\prime}$.
Thus all the closed points of the sub-scheme
$\{f^{\prime}=0\} \subset \mathcal X^{\prime}$
are in
$S^{\prime\prime}$.
Now the properties
(1) and (2)
from
Lemma
\ref{SmallAmountOfPoints}
of the
$U$-scheme
$S^{\prime\prime}$
show that
the assertions (2) and (3) of Theorem
\ref{ThmEquatingGroups}
do hold
for the closed sub-scheme
$\mathcal Z^{\prime}$ of $\mathcal X^{\prime}$ defined by $\{f^{\prime}=0\}$.
Theorem
\ref{ThmEquatingGroups}
follows.

\end{proof}

\section{One more Theorem}
\label{SectElemNisnevichSquare}
Theorem
\ref{ElementaryNisSquare} below
is a refinement of
\cite[Lemma 2]{OP}. It's proved in
\cite{Pan1}.
Let $k$ be a finite field and let
$U$ be as in Definition
\ref{DefnNiceTriple}.
For a $U$-scheme $V$ and any closed point $u \in U$
set $V_u=u\times_U V$.
\begin{thm}
\label{ElementaryNisSquare}
Let
$(q^{\prime}_U: \mathcal X^{\prime} \to U,f^{\prime},\Delta^{\prime})$
be a nice triple
over the scheme $U$ such that $f^{\prime}$ {\bf vanishes at every closed point of} $\Delta^{\prime}(U)$.
Let $\mathcal Z^{\prime}$ be the closed sub-scheme of $\mathcal X^{\prime}$ defined by $\{f^{\prime}=0\}$.
Assume that $\mathcal Z^{\prime}$ satisfies the conditions
{\rm (2)} and {\rm (3)} from
Theorem
\ref{ThmEquatingGroups}.
Then there exists a distinguished finite surjective morphism
$$ \sigma:\mathcal X^{\prime} \to\Aff^1\times U $$
\noindent
of $U$-schemes which enjoys the following properties:
\begin{itemize}
\item[\rm{(a)}]
the morphism
$\sigma|_{\mathcal Z^{\prime}}: \mathcal Z^{\prime} \to \Aff^1\times U$
is a closed embedding;
\item[\rm{(b)}] $\sigma$ is \'{e}tale in a neighborhood of
$\mathcal Z^{\prime}\cup\Delta^{\prime}(U)$;
\item[\rm{(c)}]
$\sigma^{-1}(\sigma(\mathcal Z^{\prime}))=\mathcal Z^{\prime}\coprod \mathcal Z^{\prime\prime}$
scheme theoretically and
$\mathcal Z^{\prime\prime} \cap \Delta^{\prime}(U)=\emptyset$;
\item[\rm{(d)}]
$\sigma^{-1}(\{0\} \times U)=\Delta^{\prime}(U)\coprod \mathcal D$ scheme theoretically and $\mathcal D \cap \mathcal Z^{\prime}=\emptyset$;
\item[\rm{(e)}]
for $\mathcal D_1:=\sigma^{-1}(\{1\} \times U)$ one has
$\mathcal D_1 \cap \mathcal Z^{\prime}=\emptyset$.
\item[\rm{(f)}]
there is a monic polinomial
$h \in \mathcal O[t]$
such that
$(h)=Ker[\mathcal O[t] \xrightarrow{\bar {} \circ \sigma^*}
\Gamma(\mathcal X^{\prime}, \mathcal O_{\mathcal X^{\prime}})/(f^{\prime})]$.
\end{itemize}
\end{thm}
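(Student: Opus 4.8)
The idea is to build the finite surjective morphism $\sigma$ by hand, the entire difficulty being the choice of a single coordinate function on $\mathcal X'$; everything else is either formal or bookkeeping. Recall first the data at our disposal: being a nice triple, $(q'_U\colon\mathcal X'\to U,f',\Delta')$ is equipped with a finite surjective $U$-morphism $\Pi\colon\mathcal X'\to\Aff^1\times U$, so that, writing $B=\Gamma(\mathcal X',\mathcal O_{\mathcal X'})$ (a domain, since $\mathcal X'$ is integral) and $t$ for the coordinate on the $\Aff^1$-factor, $B$ is a finite $\mathcal O[t]$-module. By condition (b) of a nice triple $\mathcal Z'=\Spec(B/f'B)$ is finite over $U=\Spec\mathcal O$; since $f'$ is a nonzero divisor of $B$ and the closed fibres of $q'_U$ are reduced $1$-dimensional, $B/f'B$ is in fact a finite \emph{free} $\mathcal O$-module, and it is semi-local. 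Note also that by hypothesis $f'$ vanishes at every closed point of $\Delta'(U)$, so those points lie on $\mathcal Z'$, and that $\mathcal Z'$ satisfies (2) and (3) of Theorem~\ref{ThmEquatingGroups}.

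First I would produce a good coordinate $\bar g$ on $\mathcal Z'$. Using that $B/f'B$ is semi-local, finite over $\mathcal O$, and using the counting hypotheses (2) and (3), I would choose $\bar g\in B/f'B$ such that: (i) the $\mathcal O$-algebra homomorphism $\mathcal O[t]\to B/f'B$, $t\mapsto\bar g$, is \emph{surjective} — by the freeness of $B/f'B$ and a primitive-element/Nakayama argument at each maximal ideal of $\mathcal O$ its kernel is then $(h)$ for a monic $h\in\mathcal O[t]$, which is exactly properties (a) and (f); (ii) $\bar g$ vanishes at the closed points $\Delta'(u)$ ($u$ a closed point of $U$), so that $\sigma$ will send $\Delta'(U)$ into $\{0\}\times U$; and (iii) for each closed point $u\in U$ the images under $\bar g$ of the points of $\mathcal Z'_u$ distinct from $\Delta'(u)$ avoid both $0$ and $1$ in $\Aff^1_{k(u)}$. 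The content of (2) and (3) is precisely that over the \emph{finite} residue field $k(u)$ the line $\Aff^1_{k(u)}$ contains at least as many closed points of each degree $r\ge1$ as $\mathcal Z'_u$ does — for $r=1$ after reserving $0$ for the unique rational point $\Delta'(u)$ and discarding the point $1$ — so that the required injective, separating assignment exists; over an infinite base one would just take a generic element of a finite generating set, but here one must make the choice explicitly by a Chinese-remainder-type construction respecting the counts.

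Next I would lift $\bar g$ to some $g\in B$ and correct it, within its coset $g+f'B$ (so as not to disturb what was arranged on $\mathcal Z'$, hence not to disturb (a), (f) and the behaviour at $\Delta'(U)$ and at the fibral points above), so that $\sigma:=(g,q'_U)\colon\mathcal X'\to\Aff^1\times U$ becomes finite and surjective. Here one works with a relative compactification $\overline{\mathcal X'}\to U$ of the relative affine curve $\mathcal X'$ whose boundary is finite over $U$: by a suitable modification of $g$ (still within its coset modulo $f'$, together with a polynomial in the $\Pi$-coordinate $t$ of controlled degree) one arranges that $g$ has a pole along every fibral component of the boundary; then $\sigma$ is proper, and it is quasi-finite because $q'_U$ has purely $1$-dimensional fibres, hence finite, and surjective. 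Simultaneously I would demand that $g$ be a \emph{separating} element near $\mathcal Z'\cup\Delta'(U)$, i.e.\ that $dg$ generate $\Omega^1_{\mathcal X'/U}$ along that closed set — again possible thanks to the counting hypotheses — which yields property (b), that $\sigma$ is \'etale in a neighbourhood of $\mathcal Z'\cup\Delta'(U)$. I expect this step, in combination with the previous one, to be the main obstacle: over a finite base field one cannot invoke any "generic choice", so one must realize all of (a), (b), the prescribed values $0$ on $\Delta'(U)$ and the avoidance of $0,1$ elsewhere, \emph{and} finiteness, by one explicit construction whose feasibility rests on (2) and (3); this is the hard technical core, carried out in detail in \cite{Pan1} (the statement being a refinement of \cite[Lemma 2]{OP}).

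Finally I would read off (c)--(e) formally. Since $\sigma|_{\mathcal Z'}$ is a closed immersion and $\sigma$ is unramified near $\mathcal Z'$, the subscheme $\mathcal Z'$ is open and closed in $\sigma^{-1}(\sigma(\mathcal Z'))$, so $\sigma^{-1}(\sigma(\mathcal Z'))=\mathcal Z'\amalg\mathcal Z''$ scheme-theoretically; and $\mathcal Z''\cap\Delta'(U)=\emptyset$ because over each closed point $u$ the only point of $\mathcal Z'_u$ mapping to $\sigma(\Delta'(u))=0$ is $\Delta'(u)$ itself, by (iii) — this is (c). For (d): $\sigma^{-1}(\{0\}\times U)$ is finite over $U$ and contains $\Delta'(U)$, and near $\Delta'(U)$ the morphism $\sigma$ is \'etale by (b), so $\Delta'(U)$ is a connected component; hence $\sigma^{-1}(\{0\}\times U)=\Delta'(U)\amalg\mathcal D$, and $\mathcal D\cap\mathcal Z'=\emptyset$ again by (iii). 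Property (e) is immediate from (iii): no point of any $\mathcal Z'_u$ maps to $1$, whence $\mathcal D_1=\sigma^{-1}(\{1\}\times U)$ is disjoint from $\mathcal Z'$. And (f) is the observation from the first bullet of the second paragraph that $\mathcal O[t]/(h)\xrightarrow{\ \sim\ }\Gamma(\mathcal X',\mathcal O_{\mathcal X'})/(f')$ with $h$ monic, which follows from (a) together with the freeness of $B/f'B$ over $\mathcal O$.
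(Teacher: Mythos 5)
The first thing to note is that this paper contains no proof of Theorem \ref{ElementaryNisSquare} to compare yours against: Section \ref{SectElemNisnevichSquare} states only that the theorem is a refinement of \cite[Lemma 2]{OP} and that it is proved in \cite{Pan1}. What you have written is therefore a reconstruction of the strategy of the companion preprint, and as a strategy it is the right one: reduce everything to the choice of one function $g$ on $\mathcal X^{\prime}$; use the finiteness (indeed freeness, by miracle flatness) of $\Gamma(\mathcal X^{\prime},\mathcal O_{\mathcal X^{\prime}})/(f^{\prime})$ over $\mathcal O$ together with the fibrewise counting conditions (2) and (3) — which over the finite residue fields $k(u)$ are precisely the substitute for the ``generic choice'' available over an infinite field — to embed $\mathcal Z^{\prime}$ into $\Aff^1\times U$ sending $\Delta^{\prime}(u)$ to $0$ and everything else away from $0$ and $1$; get (f) by Nakayama; restore finiteness of $\sigma$ by correcting $g$ inside its class modulo $f^{\prime}$ (the $u=s-rv^N$ trick with $v\in f^{\prime}A$ works for $r=1$, so no genericity is needed there); and read off (c)--(e) formally. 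Your deduction of (c)--(e) from (a), (b) and the fibrewise dispositions is correct.

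Two steps in the sketch are genuinely too quick, beyond the acknowledged deferral of the core construction to \cite{Pan1}. First, arranging that $\bar g$ vanishes at the closed points of $\Delta^{\prime}(U)$ does \emph{not} yield $\sigma(\Delta^{\prime}(U))\subseteq\{0\}\times U$: since $(\Delta^{\prime})^*(f^{\prime})\neq 0$, the section $\Delta^{\prime}(U)$ is not contained in $\mathcal Z^{\prime}$, so prescribing $g$ modulo $f^{\prime}$ controls $g$ only at the closed points of $\Delta^{\prime}(U)$, not along all of it. One must additionally normalize $g$ by subtracting $(q^{\prime}_U)^*((\Delta^{\prime})^*(g))$; this is harmless because $(\Delta^{\prime})^*(g)$ lies in the Jacobson radical of $\mathcal O$, so the fibrewise conditions and the Nakayama argument survive, but without it property (d) fails as stated. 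Second, \'etaleness of $\sigma$ along $\mathcal Z^{\prime}$ is not a formal consequence of $\sigma|_{\mathcal Z^{\prime}}$ being a closed embedding. At a point $z\in\mathcal Z^{\prime}_u$ where $f^{\prime}$ vanishes to order at least $2$, monogenicity of $\mathcal O_{\mathcal Z^{\prime}_u,z}$ over $k(u)$ does force $dg$ to generate $\Omega^1_{\mathcal X^{\prime}_u/k(u)}$ at $z$; but where $f^{\prime}$ vanishes to order exactly $1$ the local ring of $\mathcal Z^{\prime}_u$ at $z$ is just $k(z)$, the closed-embedding condition records only the residue field, and the $1$-jet of $g$ at $z$ is not even determined by $\bar g$ (it depends on the lift modulo the uniformizer $f^{\prime}$). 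So unramifiedness at such points is an extra condition on the lift, to be imposed and verified separately, not a corollary of (a). With these two repairs your outline matches the construction actually carried out in \cite{Pan1}.
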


\begin{cor}
\label{NormAtZeroAndOne}
Under the hypotheses of Theorem
\ref{ElementaryNisSquare}
let $K$ be the field of fractions of
$\mathcal O$,
$A=\Gamma(\mathcal X^{\prime}, \mathcal O_{\mathcal X^{\prime}})$,
$A_K=K \otimes_{\mathcal O} A$.
Consider an inclusion
$K[t] \subset A_K$ induced by the inclusion
$\sigma^*: \mathcal O[t] \hra A$.
Then
the polinomial $h$ from the item \emph{(f)}
does not vanish  at the points $t=1$ and $t=0$ of
the affine line
$\Aff^1_K$.

Let
$J^{\prime\prime} \subset A$
be the ideal defining the closed sub-scheme
$\mathcal Z^{\prime\prime}$
of the affine scheme
$\mathcal X^{\prime}$.
Then the extension $K[t] \subset A_K$,
the element
$f^{\prime} \in A_K$,
the polinomial
$h \in K[t]\cap f^{\prime}A_K$
from the item \emph{(\textrm{f})}
and the ideal
$J^{\prime\prime}_K$
satisfy the hypotheses of Lemma
\ref{KeyUnramifiedness}.
\end{cor}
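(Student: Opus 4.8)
The plan is to verify, item by item, the four hypotheses of Lemma~\ref{KeyUnramifiedness} for the data $B=K[t]$, $A=A_K$, $f=f^{\prime}$, with $h$ the monic polynomial of item~(f) and $J^{\prime\prime}=J^{\prime\prime}_K$, and then to deduce separately that $h$ does not vanish at $t=0$ and $t=1$. The mechanism throughout is that $\mathcal O\to K$ is a localization, hence flat, so it commutes with forming ideals, quotients, scheme-theoretic preimages and disjoint-union decompositions; apart from that, the argument is the bookkeeping translation of the geometric assertions (a)--(f) of Theorem~\ref{ElementaryNisSquare} into relations among $f^{\prime}$, $h$ and $J^{\prime\prime}$ inside $A=\Gamma(\mathcal X^{\prime},\mathcal O_{\mathcal X^{\prime}})$. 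First, since $q^{\prime}_U$ is smooth and $U$ is regular (in particular reduced), $\mathcal X^{\prime}$ is reduced, and being irreducible by Definition~\ref{DefnNiceTriple} it is integral; thus $A$ is a domain and $A_K=(\mathcal O\setminus\{0\})^{-1}A$ is a domain. The generic fibre $\mathcal X^{\prime}\times_U\Spec K$ is smooth over $K$, integral, and of pure dimension one by condition (a) of Definition~\ref{DefnNiceTriple}, so $A_K$ is a Dedekind domain; and $\sigma$ finite surjective gives $K[t]\hookrightarrow A_K$ finite, so $K[t]\subset A_K$ is a finite extension of Dedekind domains that are domains. Finally $(\Delta^{\prime})^{*}(f^{\prime})\neq 0$ in $\mathcal O$ by condition (d) of Definition~\ref{DefnNiceTriple}, so $f^{\prime}\neq 0$ already in $A$ and hence in $A_K$, while $A_K/f^{\prime}A_K=K\otimes_{\mathcal O}(A/f^{\prime}A)$ is a finite $K$-module because $A/f^{\prime}A$ is a finite $\mathcal O$-module by condition (b) of Definition~\ref{DefnNiceTriple}. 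This settles the first two hypotheses.

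For the third and fourth, note that by Theorem~\ref{ElementaryNisSquare}(a) the map $\sigma|_{\mathcal Z^{\prime}}$ is a closed embedding, so the composite $\mathcal O[t]\xrightarrow{\sigma^{*}}A\to A/f^{\prime}A=\Gamma(\mathcal Z^{\prime},\mathcal O_{\mathcal Z^{\prime}})$ is surjective; together with item (f) this gives an isomorphism $\mathcal O[t]/(h)\xrightarrow{\ \sim\ }A/f^{\prime}A$, and $h$, being monic, is a nonzero element of $f^{\prime}A$. Tensoring with $K$ yields $K[t]/(h)\xrightarrow{\ \sim\ }A_K/f^{\prime}A_K$ with $h\in K[t]\cap f^{\prime}A_K$ --- the third hypothesis. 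Moreover the scheme-theoretic image $\sigma(\mathcal Z^{\prime})$ is the closed subscheme $V(h)\subseteq\Aff^1\times U$, since $\sigma|_{\mathcal Z^{\prime}}$ is a closed immersion and, by (f), the ideal of its image is $\ker(\mathcal O[t]\to A/f^{\prime}A)=(h)$. Hence Theorem~\ref{ElementaryNisSquare}(c) reads, scheme-theoretically,
$$
V(hA)=\sigma^{-1}(V(h))=\mathcal Z^{\prime}\amalg\mathcal Z^{\prime\prime}=V(f^{\prime}A)\amalg V(J^{\prime\prime}),
$$
which forces $f^{\prime}A+J^{\prime\prime}=A$ and $hA=f^{\prime}A\cap J^{\prime\prime}=f^{\prime}A\cdot J^{\prime\prime}$; localizing at $\mathcal O\setminus\{0\}$ gives $hA_K=f^{\prime}A_K\cdot J^{\prime\prime}_K$ with $J^{\prime\prime}_K$ co-prime to $f^{\prime}A_K$ --- the fourth hypothesis. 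Thus the data satisfy all the hypotheses of Lemma~\ref{KeyUnramifiedness}.

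It remains to see that $h$ does not vanish at $t=0$ and $t=1$ over $K$. Since $\sigma|_{\mathcal Z^{\prime}}$ is a closed immersion onto $V(h)$, for a closed subscheme $W\subseteq\Aff^1\times U$ one has $V(h)\cap W=\sigma(\mathcal Z^{\prime}\cap\sigma^{-1}(W))$ as subsets. With $W=\{1\}\times U$, Theorem~\ref{ElementaryNisSquare}(e) gives $\mathcal Z^{\prime}\cap\sigma^{-1}(\{1\}\times U)=\mathcal Z^{\prime}\cap\mathcal D_1=\emptyset$, so $V(h)\cap(\{1\}\times U)=\emptyset$ and $h(1)$ is a unit of $\mathcal O$; in particular $h(1)\neq 0$ in $K$. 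With $W=\{0\}\times U$, Theorem~\ref{ElementaryNisSquare}(d) gives $\mathcal Z^{\prime}\cap\sigma^{-1}(\{0\}\times U)=\mathcal Z^{\prime}\cap\Delta^{\prime}(U)$ because $\mathcal D\cap\mathcal Z^{\prime}=\emptyset$; under the identification $\Delta^{\prime}(U)\cong U$ this intersection is the closed subscheme $V((\Delta^{\prime})^{*}(f^{\prime}))$ of $U$, which omits the generic point of $U$ since $(\Delta^{\prime})^{*}(f^{\prime})\neq 0$. Passing to $K$ makes it empty, so $V(h)_K$ misses the point $t=0$ of $\Aff^1_K$, i.e.\ $h(0)\neq 0$. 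The only genuinely delicate step is the scheme-theoretic identification $\sigma(\mathcal Z^{\prime})=V(h)$ together with reading the ideal relations $hA=f^{\prime}A\cdot J^{\prime\prime}$ and $f^{\prime}A+J^{\prime\prime}=A$ off the disjoint-union statements of Theorem~\ref{ElementaryNisSquare}; everything else is a routine flatness-of-localization verification.
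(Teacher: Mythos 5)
Your proof is correct and takes essentially the same route as the paper's: the paper likewise derives $h(1)\neq 0$ from item (e), derives $h(0)\neq 0$ from items (a), (d) together with $(\Delta^{\prime})^{*}(f^{\prime})\neq 0$, obtains the isomorphism $K[t]/(h)\to A_K/f^{\prime}A_K$ from items (a) and (f), and gets the coprimality and the relation $hA_K=f^{\prime}A_K\cdot J^{\prime\prime}_K$ from items (a), (c) and (f). You merely make explicit the bookkeeping (Dedekind-ness of $A_K$, the scheme-theoretic identification $\sigma(\mathcal Z^{\prime})=V(h)$) that the paper's terse proof leaves implicit.
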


\begin{proof}
The property $(e)$ of
\ref{ElementaryNisSquare}
implies that $h$ does not vanish at the point $1$.
Since
$(q^{\prime}_U: \mathcal X^{\prime} \to U,f^{\prime},\Delta^{\prime})$
is a nice triple
over $U$
one has
$(\Delta^{\prime})^*(f^{\prime}) \neq 0 \in \mathcal O$.
Since
$(\Delta^{\prime})^*(f^{\prime}) \neq 0 \in \mathcal O$
the conditions $(a)$ and $(d)$ imply that
$h$ does not vanish at $0$ either.

Prove now the second assertion. Firstly $A_K/fA_K$ is finite over $K$, since
$(q^{\prime}_U: \mathcal X^{\prime} \to U,f^{\prime},\Delta^{\prime})$
is a nice triple.
Secondly the items (a) and (f) show that
the induced map
$K[t]/hK[t]\to A_K/f^{\prime}A_K$ is an isomorphism.
Thirdly, the item (c)
shows that the ideal
$J^{\prime\prime}_K \subseteq A$
is co-prime to the ideal
$f^{\prime}A$.
Finally,
the items
(a), (f) and (c)
show that
$hA_K=f^{\prime}A_K\cdot J^{\prime\prime}$.

\end{proof}

\section{Proof of Theorem (A)}
\label{SectProofOfTheoremA}
\begin{proof}
Let $\mathcal O$ be the semi-local ring of finitely many closed points
$\{x_1,x_2,\dots,x_r\}$
on the $k$-smooth irreducible affine scheme $X$, where $k$ is {\bf the finite field}
from Theorem (A).
Set $U:= \text{Spec}(\mathcal O)$. Replacing $k$ with its algebraic closure
in $\Gamma(X, \mathcal O_X)$ we may and will assume that
$X$ is $k$-smooth and geometrically irreducible.
Further,
consider the reductive $U$-group scheme $G$, the $U$-tori $C$
and the smooth $U$-group scheme morphism
$$\mu : G \to C $$
from Theorem (A).
Let $K$ be the fraction field of $\mathcal O$.
Let
$\xi_K \in C(K)$
be such that
the element
$\bar \xi_K \in \mathcal F(K)$
is
$\mathcal O$-unramified (see (\ref{DefnUnramified})).

Shrinking $X$ if
necessary, we may secure the following properties:
\par\smallskip
(i) The points $x_1,x_2,\dots,x_r$ are still in $X$ and $X$ is affine;
\par\smallskip
(ii)
There are given a reductive $X$-group scheme $G_X$, an $X$-tori $C_X$, a smooth $X$-group scheme morphism
$\mu_X: G_X \to C_X$ such that $H_X:=ker(\mu_X)$ is a reductive $X$-group scheme and
$G=U \times_X G_X$, $C=U \times_X C_X$, $\mu=U \times_X \mu_X$. In particular,
for any $U$-scheme $S$ one has
$G(S)=G_X(S)$, $C(S)=C_X(S)$, $\mathcal F(S)=\mathcal F_X(S)$,
where
$\mathcal F_X(S):=C_X(S)/\mu_X(G_X(S))$.
\par\smallskip
(iii) The element $\xi$ is defined over $X_{\text{f}}$, that is
there is given an element
$\xi \in C_X(k[X]_{\text{f}})$
for a non-zero function
$\text{f} \in k[X]$
such that
the image of $\xi$ in $C_X(K)=C(K)$ coincides with the element $\xi_K$;
\par\smallskip
(iv) we may assume also that
$\bar \xi \in {\cal F}_X(k[X]_\textrm{f})$
is $k[X]$-unramified for the functor
$\mathcal F_X$;
\par\smallskip\noindent

Let
$\eta_{\text{f}}: \text{Spec}(K) \to X_{\text{f}}$
be a morphism induced by the inclusion
$k[X_{\text{f}}] \hra k(X)=K$
and let
$\eta: \text{Spec}(K) \to U$
be a morphism induced by the inclusion
$\mathcal O \hra K$.
Clearly,
$\xi_K= \eta^{*}_{\text{f}}(\xi) \in C_X(K)=C(K)$.

$\bullet$ \ {\it Our aim is to find an element
$\xi_U \in C_X(U)$
such that
$\eta^*(\bar \xi_U)=\bar \xi_K \in \mathcal F_X(K)$.}

%

We will construct such an element
$\xi_U$
rather explicitly in
(\ref{RightElement}).
At the moment right now we are given, in particular, with the smooth geometrically irreducible affine
$k$-scheme $X$, the finite family of points $x_1,x_2,\dots,x_n$ on
$X$, and the non-zero function $\text{f}\in k[X]$ vanishing at each
point $x_i$. Recall, that
beginning with these data
a nice triple
$$(q_U:\mathcal X\to U,f,\Delta)$$
is constructed in
\cite[Section 6, (16)]{PSV},
where
$\mathcal X=U \times_S X$
for a $k$-smooth affine scheme $S$ and a smooth morphism $X \to S$.
It is done shrinking $X$ and secure properties (i) to (iv) at the same time.
Recall that $q_U: \mathcal X=U \times_S X \to U$ is the projection to $U$. Let $q_X: \mathcal X= U \times_S X \to X$
be the projection to $X$.
\par
\begin{notation}
Set
$G_{\mathcal X}:=(q_X)^*(G_X)$
and
$G_{const}:=(q_U)^*(G)$,
$C_{\mathcal X}:=(q_X)^*(C_X)$
and
$C_{const}:=(q_U)^*(C)$.
Set
$\mu_{\mathcal X}=q^*_X(\mu_X): G_{\mathcal X} \to C_{\mathcal X}$
and
$\mu_{const}=q^*_U(\mu_{const}): G_{const} \to C_{const}$.
\end{notation}

By Theorem \ref{ThmEquatingGroups} there exist a morphism
\begin{equation}
\label{NiceTriplePrime}
\theta: (q^{\prime}_U: \mathcal X^{\prime} \to U, f^{\prime}, \Delta^{\prime}) \to
(q_U: \mathcal X \to U, f, \Delta)
\end{equation}
of nice triples over $U$ and isomorphisms
$$\Phi: \theta^*(G_{\text{const}}) \to \theta^*(G_{\mathcal X}), \ \Psi: \theta^*(C_{\text{const}}) \to \theta^*(C_{\mathcal X})$$
of
$\mathcal X^{\prime}$-group schemes such that
$(\Delta^{\prime})^*(\Phi)= id_{G_U}$,
$(\Delta^{\prime})^*(\Phi)= id_{G_U}$
and
\begin{equation}
\label{PhiAndPsiRelation}
\theta^*(\mu_{\mathcal X}) \circ \Phi = \Psi \circ \theta^*(\mu_{const})
\end{equation}
and the closed sub-scheme
$\mathcal Z^{\prime} \subset \mathcal X^{\prime}$
defined by $\{f^{\prime}=0\}$
satisfies the conditions
(2) and (3) from Theorem
\ref{ThmEquatingGroups}.

\begin{defn}
\label{F_const_F_X}
{\bf Set $q^{\prime}_X=q_X \circ \theta: \mathcal X^{\prime} \to X$.}
Define two functors $\mathcal F_{const}$ and \ $\mathcal F_{X}$ on the category of $\mathcal X^{\prime}$-schemes
as follows: for an $\mathcal X^{\prime}$-scheme $g: \mathcal Y^{\prime} \to \mathcal X^{\prime}$ set
$$\mathcal F_{const}(\mathcal Y^{\prime}):= C_{const}(\mathcal Y^{\prime})/\mu_{const}(G_{const}(\mathcal Y^{\prime})) \ \text{and} \ \
\mathcal F_X(\mathcal Y^{\prime}):=C_X(\mathcal Y^{\prime})/\mu_X(G_X(\mathcal Y^{\prime})).$$
Here for the functor $\mathcal F_{const}$ the scheme $\mathcal Y^{\prime}$ is regarded as a $U$-scheme via the composition
$q^{\prime}_U \circ g$ and for the functor $\mathcal F_X$ the scheme $\mathcal Y^{\prime}$ is regarded as an $X$-scheme
via the composition
$q^{\prime}_X \circ g$.
\end{defn}
The equality
(\ref{PhiAndPsiRelation})
implies that for every
$\mathcal X^{\prime}$-scheme
$\mathcal Y^{\prime}$
{\bf the group isomorphism}
$\Psi_{\mathcal Y^{\prime}}: \theta^*(C_{const})(\mathcal Y^{\prime}) \to \theta^*(C_{\mathcal X})(\mathcal Y^{\prime})$
induces a group isomorphism
\begin{equation}
\label{EquatingFunctors}
\bar \Psi_{\mathcal Y^{\prime}}:\mathcal F_{const}(\mathcal Y^{\prime})\to \mathcal F_X(\mathcal Y^{\prime})
\end{equation}
The group isomorphisms
$\Psi_{\mathcal Y^{\prime}}$
form a natural functor transformation of functors defined on the category of
$\mathcal X^{\prime}$-schemes and $\mathcal X^{\prime}$-scheme morphisms.
The same holds concerning isomorphisms
(\ref{EquatingFunctors}).

If an $\mathcal X^{\prime}$-scheme
is of the form
$\mathcal Y^{\prime} \to U \xra{\Delta^{\prime}} \mathcal X^{\prime}$
then
$\Psi_{\mathcal Y^{\prime}}=id$.
Indeed
$(\Delta^{\prime})^*(\Psi)= id_{C_U}$.
Particulary, we have proved the following
\begin{clm}
\label{PsiRavenId}
If $U$ is regarded as an $\mathcal X^{\prime}$-scheme via the morphism
$\Delta^{\prime}$
and $\textrm{Spec}(K)$
is regarded as an $\mathcal X^{\prime}$-scheme via the morphism
$\Delta^{\prime} \circ \eta$, then
$\Psi_U=id$ and $\Psi_K=id$.
\end{clm}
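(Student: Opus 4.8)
The plan is to deduce the Claim directly from the naturality of the transformation $\Psi_{\bullet}$ together with the normalization property $(\Delta^{\prime})^*(\Psi)=\id_{C_U}$ that was produced by Theorem \ref{ThmEquatingGroups}. In effect the Claim is a formal specialization of the observation made in the paragraph immediately preceding it, so the proof should be very short.

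First I would record the following general remark. Let $g\colon \mathcal Y^{\prime}\to\mathcal X^{\prime}$ be any $\mathcal X^{\prime}$-scheme whose structure morphism factors through $\Delta^{\prime}$, say $g=\Delta^{\prime}\circ h$ with $h\colon\mathcal Y^{\prime}\to U$. Since $\Psi_{\bullet}$ is a natural transformation of functors on $\mathcal X^{\prime}$-schemes, the isomorphism $\Psi_{\mathcal Y^{\prime}}$ is obtained from $(\Delta^{\prime})^*(\Psi)$ by base change along $h$; as $(\Delta^{\prime})^*(\Psi)=\id_{C_U}$ by the output of Theorem \ref{ThmEquatingGroups}, this base change is again the identity, i.e. $\Psi_{\mathcal Y^{\prime}}=\id$. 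No new geometric input is needed beyond what is already in the excerpt.

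Next I would apply this remark twice. Taking $\mathcal Y^{\prime}=U$ with $h=\id_U$ gives $\Psi_U=\id$. Taking $\mathcal Y^{\prime}=\Spec(K)$ with $h=\eta\colon\Spec(K)\to U$ — which is legitimate precisely because the $\mathcal X^{\prime}$-structure on $\Spec(K)$ stipulated in the Claim is $\Delta^{\prime}\circ\eta$, hence factors through $\Delta^{\prime}$ — gives $\Psi_K=\id$. That completes the argument.

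I do not expect any genuine obstacle here: the statement is a direct consequence of the functoriality of $\Psi$ and the equality $(\Delta^{\prime})^*(\Psi)=\id_{C_U}$. The only mild point of care is bookkeeping: one must make sure that the two $\mathcal X^{\prime}$-scheme structures under consideration — on $U$ via $\Delta^{\prime}$ and on $\Spec(K)$ via $\Delta^{\prime}\circ\eta$ — are exactly the ones factoring through $\Delta^{\prime}$, so that the base-change description of $\Psi_{\mathcal Y^{\prime}}$ applies verbatim.
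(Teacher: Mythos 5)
Your proof is correct and is essentially the paper's own argument: the text immediately preceding the Claim makes exactly the same observation, namely that for any $\mathcal X^{\prime}$-scheme whose structure map factors through $\Delta^{\prime}$ the naturality of $\Psi_{\bullet}$ together with $(\Delta^{\prime})^*(\Psi)=\id_{C_U}$ forces $\Psi_{\mathcal Y^{\prime}}=\id$, and the Claim is the specialization to $U$ and $\Spec(K)$. Nothing further is needed.
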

We will identify below in the proof the functors
$\mathcal F_{const}$
and
$\mathcal F_{X}$
via the functor isomorphism
$\bar \Psi$.
{\it We are rather closed to a construction of the required element $\xi_U \in C_X(U)$.}
Let
$(q^{\prime}_U: \mathcal X^{\prime}\to U, f^{\prime}, \Delta^{\prime})$
be the nice triple over $U$ from  (\ref{NiceTriplePrime}).
{\bf Since by \cite[Section 6, Claim 6.1]{PSV} $f$ vanishes at all closed points of $\Delta(U)$,
and $\theta$ is a morphism of nice triples, $f^{\prime}$ vanishes at all closed points of $\Delta^{\prime}(U)$ as well}.
As was already mentioned in this proof the closed sub-scheme
$\mathcal Z^{\prime} \subset \mathcal X^{\prime}$
defined by $\{f^{\prime}=0\}$
satisfies the conditions
(2) and (3) from Theorem
\ref{ThmEquatingGroups}.
So, the nice triple
$(q^{\prime}_U: \mathcal X^{\prime}\to U, f^{\prime}, \Delta^{\prime})$
satisfies the hypotheses of Theorem
\ref{ElementaryNisSquare}.

By Theorem
\ref{ElementaryNisSquare}
there exists
a finite surjective morphism
$$ \sigma:\mathcal X^{\prime} \to\Aff^1\times U $$
\noindent
of $U$-schemes which enjoys the properties:
{\bf subjecting conditions (a) to (e)
from that Theorem. }
Since ${\cal X^{\prime}}$ and
${\cal Y^{\prime}}:=U \times \Aff^1$
are regular schemes and $\sigma$ is finite surjective
it is finite and flat by a theorem of Grothendieck
\cite[Cor.18.17]{E}.
So, for any $U$-map
$t:{\cal Y^{\prime\prime}}\to {\cal Y^{\prime}}$
of affine $U$-schemes, setting
${\cal X^{\prime\prime}}={\cal X^{\prime}}\times_{\cal Y^{\prime}} {\cal Y^{\prime\prime}}$
we have the norm map
given by formulae
(\ref{NormMap})
$$
N_{\mathcal X^{\prime\prime}/\mathcal Y^{\prime\prime}}: C_{const}({\cal X^{\prime\prime}}) \to C_{const}(\cal Y^{\prime\prime}).
$$
Let
${\cal D}_1=\sigma^{-1}(U\times \{1\})$ and ${\cal D}$ be as in Theorem
\ref{ElementaryNisSquare}.
The scheme theoretic pre-image
$\sigma^{-1}(U\times \{0\})$
of
$U \times \{0\}$
is equal to
$\Delta^{\prime}(U)\coprod \mathcal D$.

Consider the element
$\xi \in C_X(k[X]_\textrm{f})$
chosen above.
Set
$$\zeta= (q_X^{\prime})^*(\xi) \in C_X(\mathcal X^{\prime}_{f^{\prime}})=C_{const}(\mathcal X^{\prime}_{f^{\prime}}).$$
By Theorem
\ref{ElementaryNisSquare}
one has
$\mathcal D_1 \cap \mathcal Z^{\prime}=\emptyset$
and
$\mathcal D \cap \mathcal Z^{\prime}=\emptyset$.
Thus one can form the following element
\begin{equation}
\label{RightElement}
\xi_U = N_{{\cal D}_1/U}(\zeta|_{{\cal D}_1})N_{{\cal D}/U}(\zeta|_{{\cal D}})^{-1}
\in C_{const}(U)=C_X(U)
\end{equation}

\begin{clm}[Main]
\label{MainClaim}
One has
$\bar \xi_{K}= \eta^*(\bar \xi_U) \in \mathcal F_X(K)$,
where $K$ is the fraction field  of both $k[X]$ and
$\mathcal O$.
\end{clm}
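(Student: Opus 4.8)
The idea is to compare the two specializations of the class $\zeta\in C_{\mathrm{const}}(\mathcal X'_{f'})$ along the two horizontal sections $\{0\}\times U$ and $\{1\}\times U$ of $\sigma$, exploiting the finite flat morphism $\sigma\colon\mathcal X'\to\Aff^1\times U$ and the norm maps of Section~\ref{SectNorms}. The key observation is that over the generic point $\operatorname{Spec}K$ the polynomial $h$ of Theorem~\ref{ElementaryNisSquare}(f) does not vanish at $t=0$ or $t=1$ (Corollary~\ref{NormAtZeroAndOne}), so $\mathcal Z'$ is disjoint from both $\mathcal D$, $\Delta'(U)$ (the fibre over $0$) and from $\mathcal D_1$ (the fibre over $1$); hence $\zeta$, a priori only defined on $\mathcal X'_{f'}$, restricts to honest sections over $\mathcal D_1$, $\mathcal D$ and $\Delta'(U)$. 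First I would pass to the generic fibre: put $A_K = K\otimes_{\mathcal O}\Gamma(\mathcal X',\mathcal O_{\mathcal X'})$, and use Corollary~\ref{NormAtZeroAndOne} to check that the data $(K[t]\subset A_K,\ f',\ h,\ J''_K)$ satisfy the hypotheses of Lemma~\ref{KeyUnramifiedness}.

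**Step via homotopy invariance.** Over $K$, the finite flat extension $K[t]\subset A_K$ together with $\zeta\otimes 1\in C(A_{K,f'})$ produces, by formula~(\ref{NormMap}), a norm $N_{A_K/K[t]}(\zeta\otimes 1)\in C(K[t])$; its class $\overline{N_{A_K/K[t]}(\zeta\otimes 1)}\in\mathcal F(K[t])$ has a well-defined image $\beta(t)\in\mathcal F(K(t))$. The point of Lemma~\ref{KeyUnramifiedness}, applied with $B=K[t]$, $A=A_K$, is that since $\bar\zeta$ is $\mathcal X'$-unramified over the generic point (which follows from condition~(iv) of the proof of Theorem~A, transported along $q'_X$, since $\bar\xi$ is $k[X]$-unramified and $q'_X$ is smooth), the class $\bar\beta=\overline{N_{F/E}(\zeta)}$ is $K[t]$-unramified, i.e. $\bar\beta\in\mathcal F_{nr,K[t]}(K(t))$. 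Then Corollary~\ref{TwoSpecializations} gives $s_0(\bar\beta)=s_1(\bar\beta)$ in $\mathcal F(K)$. I would then identify $s_1(\bar\beta)$ with $\eta^*\bigl(\overline{N_{\mathcal D_1/U}(\zeta|_{\mathcal D_1})}\bigr)$ and $s_0(\bar\beta)$ with $\eta^*\bigl(\overline{N_{\Delta'(U)/U}(\zeta|_{\Delta'(U)})\cdot N_{\mathcal D/U}(\zeta|_{\mathcal D})}\bigr)$, using the base-change property~(i$'$) and multiplicativity~(ii$'$) of the norm maps together with the scheme-theoretic splittings $\sigma^{-1}(\{0\}\times U)=\Delta'(U)\amalg\mathcal D$ and $\sigma^{-1}(\{1\}\times U)=\mathcal D_1$ from Theorem~\ref{ElementaryNisSquare}(d),(e); here the fibre over $t=1$ is entirely $\mathcal D_1$, so no extra factor appears.

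**Conclusion.** Combining the last two identifications with $s_0(\bar\beta)=s_1(\bar\beta)$ yields, after dividing, exactly
$\eta^*(\bar\xi_U)=\eta^*\bigl(\overline{N_{\mathcal D_1/U}(\zeta|_{\mathcal D_1})}\cdot\overline{N_{\mathcal D/U}(\zeta|_{\mathcal D})}^{-1}\bigr)
= \eta^*\bigl(\overline{N_{\Delta'(U)/U}(\zeta|_{\Delta'(U)})}\bigr).$
Finally, $\Delta'(U)\to U$ is an isomorphism, so $N_{\Delta'(U)/U}$ is the identity by normalization~(iii$'$), and $\zeta|_{\Delta'(U)}=(\Delta')^*(q'_X)^*(\xi)$. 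Since $q'_X\circ\Delta'$ factors through $\eta_{\mathrm f}$ on the generic point (because $\Delta'$ is obtained from $\Delta$, and $\Delta$ is compatible with the chosen point of $X_{\mathrm f}$), one gets $\eta^*\bigl(\overline{\zeta|_{\Delta'(U)}}\bigr)=\eta_{\mathrm f}^*(\bar\xi)=\bar\xi_K$ in $\mathcal F_X(K)$, where the identification of the functors $\mathcal F_{\mathrm{const}}$ and $\mathcal F_X$ over $\operatorname{Spec}K$ is trivial by Claim~\ref{PsiRavenId} ($\Psi_K=\mathrm{id}$). This proves $\bar\xi_K=\eta^*(\bar\xi_U)$.

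**Main obstacle.** The delicate point is not the homotopy-invariance step, which is formal once Lemma~\ref{KeyUnramifiedness} applies, but rather the bookkeeping that the element $\zeta=(q'_X)^*(\xi)$ is simultaneously a section over $\mathcal X'_{f'}$ \emph{and} restricts to genuine (unramified) sections over each of $\mathcal D_1$, $\mathcal D$, $\Delta'(U)$, and that all the norm maps, base-change squares, and the identification $\bar\Psi$ of $\mathcal F_{\mathrm{const}}$ with $\mathcal F_X$ are mutually compatible; one must also be careful that $\bar\zeta$ is $\mathcal X'$-unramified (as opposed to merely $\mathcal X'_{f'}$-unramified) precisely away from the divisor $\{h=0\}$, which is where Corollary~\ref{NormAtZeroAndOne} is used to rule out $t=0,1$.
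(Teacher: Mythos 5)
Your proposal is correct and follows essentially the same route as the paper: your $\bar\beta(t)$ is exactly the paper's $\zeta_t=N_{K(\mathcal X'_K)/K(\Aff^1)}(\zeta_K)$, whose $K[t]$-unramifiedness is established via Corollary \ref{NormAtZeroAndOne} and Lemma \ref{KeyUnramifiedness}, after which Corollary \ref{TwoSpecializations} plus the base-change, multiplicativity and normalization properties of the norm, together with the identity $(\Delta'_K)^*(\zeta_K)=\xi_K$, give the claim. The only cosmetic imprecision is that the norm lands in $C(K[t]_h)$ rather than $C(K[t])$, which you effectively acknowledge by invoking the non-vanishing of $h$ at $t=0,1$.
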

To complete the proof of Theorem A, it remains to prove the Claim.
To do this it is convenient to fix some notations.
For
an $U$-scheme ${\cal Z}$
set
${\cal Z}_K= \textrm{Spec}(K) \times_U {\cal Z}$,
for a
$U$-morphism
$\varphi: {\cal Z} \to {\cal W}$
set
$\varphi_K= \textrm{Spec}(K) \times_U \varphi$.
Clearly one has
$U_K= \textrm{Spec}(K)$
and
$(U \times \Aff^1)_K$
is just the affine line
$\Aff^1_K$.
Its closed subschemes
$U_K \times \{1\}$
and
$U_K \times \{0\}$
coincide with the points
$1$ and $0$ of $\Aff^1_K$.
The morphism
$(q^{\prime}_U)_K: {\cal X^{\prime}}_K \to U_K=\textrm{Spec}(K)$
is smooth. In fact, the morphism $\theta$ is \'{e}tale and
$q_U: \mathcal X \to U$ is smooth.
The morphism $\sigma_K$ is finite flat and fits in the commutative triangle
\begin{equation}
    \xymatrix{
    {\cal X^{\prime}}_K \ar[rr]^-{\sigma_K} \ar[rd]_-{(q^{\prime}_U)_K} && \Aff^1_K \ar[ld]^-{pr_K} &   \\
    & \textrm{Spec}(K)  \\
    }
\end{equation}
Clearly
${\cal D}_{1,K}$ is the scheme theoretic pre-image of
the point $\{1\} \in \Aff^1_K$
and the disjoint union
${\cal D}_K \coprod \Delta^{\prime}_K$
is the scheme theoretic pre-image of the point
$\{0\} \in \Aff^1_K$
under the morphism
$\sigma_K$.

Let
$\zeta_{K}$
be the pull-back of
$\zeta$
under the natural map
$\mathcal X^{\prime}_{f^{\prime},K} \to \mathcal X^{\prime}_{f^{\prime}}$.
Let
$\eta: \textrm{Spec}(K)= U_K \to U$
be the map from the beginning of the present section.
By the base change property of the norm map
(\ref{NormMap}) one has
\begin{equation}
\label{XiPrimeUK}
\eta^*(\xi_U)=\xi_{U,K} = N_{{\cal D}_{1,K}/U_K}(\zeta_{K}|_{{\cal D}_{1,K}})N_{{\cal D_K}/U_K}(\zeta_{K}|_{{\cal D}_K})^{-1}
\in C_{const}(K)
\end{equation}
Set
\begin{equation}
\label{ZetaU}
\zeta_t:=N_{K({\cal X^{\prime}}_K)/K(\Aff^1)}(\zeta_{K})  \in C_{const}(K(\Aff^1))=C_{const}(K(t)).
\end{equation}
To prove Claim
\ref{MainClaim}
we need the following one:
\begin{clm}
\label{Claim}
The class
$\bar \zeta_t \in {\cal F}_{const}(K(t))$
is $K[t]$-unramified for the functor
$\mathcal F_{const}$.
\end{clm}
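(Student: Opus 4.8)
The goal is to show that $\bar\zeta_t = \overline{N_{K(\mathcal X'_K)/K(t)}(\zeta_K)} \in \mathcal F_{const}(K(t))$ is unramified at every height-one prime of $K[t]$, i.e. for every closed point $v \in \Aff^1_K$ the class $\bar\zeta_t$ lies in the image of $\mathcal F_{const}(K[t]_{(v)}) \to \mathcal F_{const}(K(t))$. The natural tool is Lemma \ref{KeyUnramifiedness}, applied to the finite extension of Dedekind $K$-algebras $K[t] \subset A_K$ (where $A = \Gamma(\mathcal X', \mathcal O_{\mathcal X'})$, $A_K = K\otimes_{\mathcal O} A$), with $f = f'$, with the monic polynomial $h$ supplied by Theorem \ref{ElementaryNisSquare}(f), and with the ideal $J''_K$. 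The point is that Corollary \ref{NormAtZeroAndOne} has already verified that all the hypotheses of Lemma \ref{KeyUnramifiedness} are met by exactly this data: $A_K/f'A_K$ is finite over $K$, the map $K[t]/hK[t] \to A_K/f'A_K$ is an isomorphism, $J''_K$ is coprime to $f'A_K$, and $hA_K = f'A_K \cdot J''_K$.

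First I would record that $\zeta = (q'_X)^*(\xi) \in C_X(\mathcal X'_{f'}) = C_{const}(\mathcal X'_{f'})$, and hence $\zeta_K \in C_{const}(\mathcal X'_{f',K}) = C((A_K)_{f'})$; its class $\overline{\zeta_K} \in \mathcal F_{const}(\mathrm{Frac}(A_K))$ is the pull-back of $\bar\xi \in \mathcal F_X(k[X]_{\mathrm f})$ along $q'_X$. Next I would check the crucial input: that $\overline{\zeta_K}$ is $A_K$-unramified for the functor $\mathcal F_{const}$. This follows from property (iv) in the proof of Theorem A, which says $\bar\xi$ is $k[X]$-unramified for $\mathcal F_X$, together with the fact that $q'_X : \mathcal X' \to X$ is a composition of a flat morphism and an étale morphism, so pull-back sends unramified classes to unramified classes — one passes to a height-one prime $\mathfrak q$ of $A_K$, looks at its image $\mathfrak p$ in (a suitable localization of) $k[X]$, and uses that $\bar\xi$ extends over $(k[X]_{\mathrm f})_{\mathfrak p}$ hence over $(A_K)_{\mathfrak q}$. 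Then Lemma \ref{KeyUnramifiedness}, with $\alpha = \zeta_K \in C((A_K)_{f'})$, $E = K(t)$, $F = \mathrm{Frac}(A_K) = K(\mathcal X'_K)$, yields exactly that $\bar\beta = \overline{N_{F/E}(\zeta_K)} = \bar\zeta_t$ is $K[t]$-unramified, which is the Claim.

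The one genuinely delicate point is making precise the transfer of unramifiedness along $q'_X$ in the passage from $\mathcal F_X$ on $k[X]$-schemes to $\mathcal F_{const}$ on $\mathcal X'$-schemes, because the two functors $\mathcal F_{const}$ and $\mathcal F_X$ are being silently identified via $\bar\Psi$ (see \eqref{EquatingFunctors}) and one must ensure that the chosen identification is compatible with restriction to localizations of $A_K$ — this is exactly why $\bar\Psi$ was arranged to be a natural transformation of functors on the category of $\mathcal X'$-schemes, so no incompatibility arises. I would also double-check that $A_K$ is indeed a Dedekind domain (equivalently, that $\mathcal X'_K$ is a smooth affine curve over $K$, which holds since $\mathcal X' \to U$ is smooth with one-dimensional fibres and $\theta$ is étale), so that Lemma \ref{KeyUnramifiedness} genuinely applies. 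Everything else is bookkeeping already carried out in Corollary \ref{NormAtZeroAndOne}.
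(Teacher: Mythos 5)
Your proposal is correct and takes essentially the same route as the paper: both arguments first establish that $\bar\zeta_K$ is $\mathcal X'_K$-unramified by pulling back the $k[X]$-unramifiedness of $\bar\xi$ along the composite $r:\mathcal X'_K\to\mathcal X'\xrightarrow{q'_X}X$ (the key observation being that $r$ sends height-one points to points of codimension at most one), and then conclude by applying Lemma \ref{KeyUnramifiedness} to the data $K[t]\subset A_K$, $f'$, $h$, $J''_K$ whose hypotheses were verified in Corollary \ref{NormAtZeroAndOne}.
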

{\it Prove now this Claim.}
Let
$\zeta_{K} \in C_X(\mathcal X^{\prime}_{f^{\prime},K})$
be the pull-back of
$\zeta \in C_X(\mathcal X^{\prime}_{f^{\prime}})$
under the natural morphism
$\mathcal X^{\prime}_{f^{\prime},K} \to \mathcal X^{\prime}_{f^{\prime}}$.

Consider the composition morphism of schemes
$r: \mathcal X^{\prime}_{K} \to \mathcal X^{\prime} \xra{q^{\prime}_X} X$.
It induces a rational function field inclusion
$K \hra K(\mathcal X^{\prime}_{K})$
(stress that it does not coincides with the one induced by the projection
$q^{\prime}_{U,K}: \mathcal X^{\prime}_{K} \to \textrm{Spec}(K)$).
It's easy to check that for each closed point
$y \in \mathcal X^{\prime}_{K}$
its image
$r(y) \in X$
has either hight $1$ or $0$ (codimension $1$ or $0$).
The family of commutative diagrams
\begin{equation}
\label{UnramifienessDiagram}
    \xymatrix{
         &&  \mathcal F_X(\mathcal O_{\mathcal X^{\prime}_{K},y}) \ar[d]^{}  && \mathcal F_X(\mathcal O_{X,r(y)}) \ar[ll]_{r^*} \ar[d]_{} &\\
     && \mathcal F_X(K(\mathcal X^{\prime}_{K})) &&  \ar[ll]_{r^*} \mathcal F_X(K).  & \\
    }
\end{equation}
shows that the map
$r^*: \mathcal F_X(K) \to \mathcal F_X(K(\mathcal X^{\prime}_{K}))$
takes
$X$-unramified elements to
$\mathcal X^{\prime}_{K}$-unramified elements for the functor $\mathcal F_X$.
Since the class
$\bar \xi \in {\cal F}_X(k[X]_\textrm{f})$
is $k[X]$-unramified,
the element
$\bar \zeta_{K} =r^*(\bar \xi) \in \mathcal F_X(K(\mathcal X^{\prime}_{f^{\prime},K}))=\mathcal F_{const}(\mathcal X^{\prime}_{f^{\prime},K})$
is
$\mathcal X^{\prime}_{K}$-unramified.

{\bf Under the notation of Corollary
\ref{NormAtZeroAndOne}
the extension $K[t] \subset A_K$,
the element
$f^{\prime} \in A_K$,
the polinomial
$h \in K[t]\cap f^{\prime}A_K$
and the ideal
$J^{\prime\prime}_K$
satisfy the hypotheses of Lemma
\ref{KeyUnramifiedness}.
As we already know the element
$\bar \zeta_{K} \in \mathcal F_{const}(\mathcal X^{\prime}_{f^{\prime},K})$
is
$\mathcal X^{\prime}_{K}$-unramified
for the functor $\mathcal F_{const}$.

Thus by Lemma
\ref{KeyUnramifiedness}
the class $\bar \zeta_t$
is $K[t]$-unramified for the functor
$\mathcal F_{const}$.
This implies the Claim
\ref{Claim}.}

Continue the proof of Claim
\ref{MainClaim}.
By Claim
\ref{Claim}
we can apply the specialization maps to
$\bar \zeta_t$.
By Corollary
\ref{TwoSpecializations}
the specializations at $0$ and $1$ of the element
$\bar \zeta_t$
coincide, that is
\begin{equation}
\label{Rel2}
s_1(\bar \zeta_t)=s_0(\bar \zeta_t) \in {\cal F}_{const}(K).
\end{equation}
By Corollary
\ref{NormAtZeroAndOne}
the function
$h \in K[t]$
does not vanish as at $1$, so at $0$ and
by the items (a) and (f) of Theorem
\ref{ElementaryNisSquare}
one has
$\zeta_t \in C_{const}(K[t]_{h})$.
Using the relation between specialization and evaluation maps described in
Definition
\ref{SpecializationDef}
one has a chain of equalities
\begin{equation}
\label{Rel3}
\overline {Ev_1(\zeta_t)} = s_1(\bar \zeta_t)= s_0(\bar \zeta_t)=
\overline {Ev_0(\zeta_t)} \in {\cal F}_{const}(K).
\end{equation}
The base change and the multiplicativity properties of the norm map
(\ref{NormMap})
imply equalities
\begin{equation}
\label{Rel4}
Ev_1(\zeta_t)= N_{{\cal D}_{1,K}/U_K}(\zeta_{K}|_{{\cal D}_{1,K}})
\end{equation}
and
\begin{equation}
\label{Rel4}
Ev_0(\zeta_t)=
N_{{\cal D}_K \coprod \Delta^{\prime}_K/U_K}(\zeta_{K}|_{{\cal D}_K \coprod \Delta^{\prime}_K})=
N_{{\cal D}_K/U_K}(\zeta_{K}|_{{\cal D}_K})\cdot N_{\Delta^{\prime}_K/U_K}(\zeta_{K}|_{\Delta^{\prime}_K})
\end{equation}
So, we have a chain of equalities in ${\cal F}_{const}(K)$
\begin{equation}
\label{Rel4a}
\overline {N_{{\cal D}_{1,K}/U_K}(\zeta_{K}|_{{\cal D}_{1,K}})}=
\overline {Ev_1(\zeta_t)}=\overline {Ev_0(\zeta_t)}=
\overline {N_{{\cal D}_K/U_K}(\zeta_{K}|_{{\cal D}_K})}\cdot \overline {N_{\Delta^{\prime}_K/U_K}(\zeta_{K}|_{\Delta^{\prime}_K})}
\end{equation}
By the normalization property of the norm map
one has
\begin{equation}
\label{Rel4b}
N_{\Delta^{\prime}_K/U_K}(\zeta_{K}|_{\Delta^{\prime}_K})=(\Delta^{\prime}_K)^*(\zeta_{K})
\end{equation}

\begin{clm}
\label{Claim2}
$(\Delta^{\prime}_K)^*(\zeta_{K})=\xi_{K} \in C_{const}(K)$.
\end{clm}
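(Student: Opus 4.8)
The plan is to unwind the definitions of $\zeta_K$ and of the section $\Delta^{\prime}_K$, and to trace the element $\xi$ through the various pull-back maps, using that $\Delta^{\prime}$ is a section of $q^{\prime}_U$ and sits compatibly with $\Delta$. First I would recall that $\zeta = (q^{\prime}_X)^*(\xi) \in C_X(\mathcal X^{\prime}_{f^{\prime}})$ where $q^{\prime}_X = q_X \circ \theta$, and that $\zeta_K$ is the pull-back of $\zeta$ along $\mathcal X^{\prime}_{f^{\prime},K} \to \mathcal X^{\prime}_{f^{\prime}}$. Hence $(\Delta^{\prime}_K)^*(\zeta_K) = (\Delta^{\prime}_K)^*\big((q^{\prime}_X)^*(\xi)\big)$, computed in the base-changed setting over $K$.

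The key step is the identity of morphisms $q^{\prime}_X \circ \Delta^{\prime} \circ \eta = \eta_{\text{f}}$ as maps $\text{Spec}(K) \to X_{\text{f}}$ (equivalently into $X$). This follows because $\Delta^{\prime}$ is obtained, in the construction after (\ref{NiceTriplePrime}) and Definition \ref{F_const_F_X}, as the composition of $\delta^{\prime\prime}$ with $\Delta$, so that $q^{\prime}_X \circ \Delta^{\prime} = q_X \circ \theta \circ \Delta^{\prime} = q_X \circ \Delta$; and by construction of the original nice triple in \cite[Section 6, (16)]{PSV}, $q_X \circ \Delta: U \to X$ is the canonical map (the one realizing $\mathcal X = U \times_S X$ with $\Delta$ the graph-type section), whose restriction along $\eta: \text{Spec}(K) \to U$ is exactly $\eta_{\text{f}}$ up to the open immersion $X_{\text{f}} \hookrightarrow X$. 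Therefore
$$
(\Delta^{\prime}_K)^*(\zeta_K) = \eta^*\big((\Delta^{\prime})^*(q^{\prime}_X)^*(\xi)\big) = \eta^*\big((q^{\prime}_X \circ \Delta^{\prime})^*(\xi)\big) = \eta^*_{\text{f}}(\xi) = \xi_K
$$
in $C_X(K) = C_{const}(K)$, where the last equality is the relation $\xi_K = \eta^*_{\text{f}}(\xi)$ recorded just before the statement of the Main Claim. I would also note that Claim \ref{PsiRavenId} guarantees $\Psi_K = \text{id}$, so that identifying $C_{const}$ with $C_X$ over $\text{Spec}(K)$ via $\Psi$ introduces no discrepancy; this is why the equality may be read inside $C_{const}(K)$.

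The main obstacle is purely bookkeeping: one must be careful that all the base changes to $K$ are taken along the correct structure map. There are two different field inclusions $K \hookrightarrow K(\mathcal X^{\prime}_K)$ floating around — the one induced by $q^{\prime}_{U,K}$ and the one induced by $r = q^{\prime}_X \circ (\text{base change})$ — and the point here is that after restricting along the section $\Delta^{\prime}_K$ both pull-backs of $\xi$ land in $C(K)$ and agree, because $\Delta^{\prime}_K$ is a section of $q^{\prime}_{U,K}$ lying over $\text{Spec}(K)$ while also mapping, via $q^{\prime}_X$, to the generic point of $X$. Once the compositions of morphisms are matched up correctly, the claim is immediate from the contravariant functoriality of $S \mapsto C_X(S)$.
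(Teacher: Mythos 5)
Your proposal is correct and follows essentially the same route as the paper: the paper introduces the composite $r_{\text{f}}:\mathcal X^{\prime}_{f^{\prime},K}\to\mathcal X^{\prime}_{f^{\prime}}\to X_{\text{f}}$, observes $r_{\text{f}}\circ\Delta^{\prime}_K=\eta_{\text{f}}$ (your identity $q^{\prime}_X\circ\Delta^{\prime}\circ\eta=\eta_{\text{f}}$, which rests on $\theta\circ\Delta^{\prime}=\Delta$ and $q_X\circ\Delta=can$), and concludes $(\Delta^{\prime}_K)^*(\zeta_K)=(\Delta^{\prime}_K)^*(r_{\text{f}}^*(\xi))=\eta_{\text{f}}^*(\xi)=\xi_K$ by functoriality, exactly as you do. Your extra remark that $\Psi_K=\mathrm{id}$ justifies reading the equality in $C_{const}(K)$ is a harmless clarification of the identification $C_X(K)=C_{const}(K)$ that the paper leaves implicit.
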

Set
$q^{\prime}_{X,\textrm{f}}= q^{\prime}_{X}|_{\mathcal X^{\prime}_{f^{\prime}}}:
\mathcal X^{\prime}_{f^{\prime}} \to X_{\textrm{f}}$,
where $q^{\prime}_{X}$ is from Definition
\ref{F_const_F_X}.
Let $r_{\textrm{f}}$ be the composite morphism
$\mathcal X^{\prime}_{f^{\prime},K} \to \mathcal X^{\prime}_{f^{\prime}} \xra{q^{\prime}_{X, \textrm{f}}} X_{\textrm{f}}$.
Clearly,
$r_{\textrm{f}} \circ \Delta^{\prime}_K= \eta_{\textrm{f}}: Spec(K) \to X_{\textrm{f}}$.
The following chain of equalities prove Claim
\ref{Claim2}:
$(\Delta^{\prime}_K)^*(\zeta_{K})=(\Delta^{\prime}_K)^*(r^*_{\textrm{f}}(\xi))=
\eta^*_{\textrm{f}}(\xi)=\xi_K \in C_X(K)=C_{const}(K)$. \\
By Claim \ref{Claim2} and equalities
(\ref{Rel4b}), (\ref{Rel4a}), (\ref{XiPrimeUK})
we have
in
${\cal F}_{const}(K)={\cal F}_X(K)$
$$
\bar \xi_{K}= (\Delta^{\prime}_K)^*(\bar \zeta_{K})=
\overline {(\Delta^{\prime}_K)^*(\zeta_{K})}=
\overline {N_{\Delta^{\prime}_K/U_K}(\zeta_{K}|_{\Delta^{\prime}_K})}=
$$
$$
\overline {N_{{\cal D}_{1,K}/U_K}(\zeta_{K}|_{{\cal D}_{1,K}})} \cdot
(\overline {N_{{\cal D}_K/U_K}(\zeta_{K}|_{{\cal D}_K})})^{-1}=\eta^*(\bar \xi_U).
$$

Whence the Claim
\ref{MainClaim}.
The proof of {\bf Theorem A} is completed.

\end{proof}

\section{An extension of Theorem A}
\label{SectProofofTheoremB}
The main aim of the present Section is to prove the following result
\begin{thm}[Theorem B]
\label{PurityGeneral}
Let $R$ be a regular local
domain containing {\bf a field $k$.}
Let
$$\mu: G\to C$$
be a smooth $R$-group scheme morphism of reductive
$R$-group schemes, with a torus $C$. Set
$H= ker (\mu)$ and suppose additionally that
$H$ is a reductive $\mathcal O$-group scheme.
The functor
$$\mathcal F: S\mapsto C(S)/\mu(G(S))$$
defined on the category of $R$-algebras
satisfies purity for $R$.
\end{thm}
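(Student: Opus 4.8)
The plan is to deduce Theorem \ref{PurityGeneral} from the geometric cases already in hand, namely Theorem \ref{TheoremA} (purity for the semi-local ring of finitely many closed points of a smooth affine variety over a \emph{finite} field) and its analogue over \emph{infinite} fields from \cite{Pa2}, by means of Popescu's desingularization theorem together with a limit argument. First, since $R$ contains a field it contains its prime field $k_0$ (which is $\mathbb F_p$ or $\mathbb Q$), and as $G$, $C$, $\mu$ and $H=\ker\mu$ are given over $R$ we may assume $k=k_0$. Let $L=\mathrm{Frac}(R)$ and let $\xi\in\mathcal F(L)$ lie in $\bigcap_{\mathrm{ht}\,\mathfrak p=1}\mathrm{Im}[\mathcal F(R_{\mathfrak p})\to\mathcal F(L)]$; we must lift $\xi$ to $\mathcal F(R)$. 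Represent $\xi$ by $c\in C(L)$; since $C$ is finitely presented over $R$ and $L=\varinjlim_{0\neq g\in R}R_g$, already $c\in C(R_f)$ for some $0\neq f\in R$. If $f$ is a unit we are done, so $f\in\mathfrak m$, and then $\xi$ is automatically unramified at every height one prime not containing $f$; thus the only constraints sit at the finitely many height one primes $\mathfrak p_1,\dots,\mathfrak p_s$ minimal over $(f)$, and for each $i$ we fix $c_i\in C(R_{\mathfrak p_i})$ whose class in $\mathcal F(R_{\mathfrak p_i})$ maps to $\xi$, together with $g_i\in G(L)$ satisfying $c=\mu(g_i)c_i$ in $C(L)$.

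By Popescu's theorem $R=\varinjlim_\alpha A_\alpha$ is a filtered colimit of smooth affine $k_0$-algebras. By the usual spreading-out and limit formalism all of the above data descend to some index $\alpha$: reductive group schemes $G_\alpha,C_\alpha$ with $C_\alpha$ a torus, a smooth morphism $\mu_\alpha\colon G_\alpha\to C_\alpha$ with reductive kernel (after shrinking $\Spec(A_\alpha)$ to a principal open neighbourhood of the relevant points so that reductivity and smoothness hold everywhere), a function $f_\alpha\in A_\alpha$, an element $c_\alpha\in C_\alpha((A_\alpha)_{f_\alpha})$, the primes $\mathfrak p_i$ and $\mathfrak m$ as primes $\mathfrak q_{i,\alpha}$, $\mathfrak n_\alpha$ of $A_\alpha$, and the witnesses $c_i,g_i$. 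Write $\mathcal O'_\alpha:=(A_\alpha)_{\mathfrak n_\alpha}$ for the local ring of $\Spec(A_\alpha)$ at the point $y_\alpha$ given by $\mathfrak n_\alpha$; it is regular, and (modulo the standard passage to the quotient by the kernel of $A_\alpha\to R$) one has $R=\varinjlim\mathcal O'_\alpha$. For $\alpha$ large the height one primes of $\mathcal O'_\alpha$ containing $f_\alpha$ are exactly $\mathfrak q_{1,\alpha},\dots,\mathfrak q_{s,\alpha}$, so the descended class $\bar c_\alpha\in\mathcal F_\alpha(\mathrm{Frac}(\mathcal O'_\alpha))$ (for the functor $\mathcal F_\alpha$ attached to $\mu_\alpha$) is $\mathcal O'_\alpha$-unramified in the sense of (\ref{DefnUnramified}): at a height one prime not containing $f_\alpha$ it is unramified since $c_\alpha$ is regular there, and at $\mathfrak q_{i,\alpha}$ it is unramified because $\xi$ is unramified at $\mathfrak p_i$, which descends for $\alpha$ large. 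It therefore suffices to lift $\bar c_\alpha$ to $\mathcal F_\alpha(\mathcal O'_\alpha)$ and push the result forward to $\mathcal F(R)$.

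Finally we dispose of $\mathcal O'_\alpha$, the local ring at a possibly non-closed point of the smooth affine $k_0$-variety $\Spec(A_\alpha)$. Put $m=\mathrm{tr.deg}_{k_0}\kappa(y_\alpha)$. If $m=0$ then $y_\alpha$ is a closed point: when $k_0=\mathbb F_p$ this is precisely the situation of Theorem \ref{TheoremA} (one closed point), and when $k_0=\mathbb Q$ we invoke the purity theorem of \cite{Pa2} over the infinite field $\mathbb Q$; either way purity for $\mathcal O'_\alpha$ furnishes the required lift. If $m>0$, using that $A_\alpha$ is smooth over the \emph{perfect} field $k_0$ (pick \'{e}tale coordinates near $y_\alpha$ and a $k_0$-linear subspace of them in general position) we may choose $t_1,\dots,t_m\in A_\alpha$ with $A_\alpha$ smooth over $k_0[t_1,\dots,t_m]$ in a neighbourhood of $y_\alpha$ and $\mathfrak n_\alpha\cap k_0[t_1,\dots,t_m]=0$. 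Then $A':=A_\alpha\otimes_{k_0[t_1,\dots,t_m]}k_0(t_1,\dots,t_m)$ is smooth affine over the \emph{infinite} field $k_0(t_1,\dots,t_m)$, and since every nonzero element of $k_0[t_1,\dots,t_m]$ is already a unit in $\mathcal O'_\alpha$ this base change identifies $\mathcal O'_\alpha$ with the local ring at a \emph{closed} point $y'$ of $\Spec(A')$. The descended data base change accordingly, and \cite{Pa2} applies to yield the lift of $\bar c_\alpha$ over $\mathcal O'_\alpha$. (This single-point argument is exactly why one obtains the statement for local, and not for semi-local, $R$: distinct points would in general force base changes along transcendence bases of different sizes.)

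The main obstacle is the limit argument of the second paragraph: one must arrange, for $\alpha$ large, that reductivity and smoothness of the descended group schemes hold over a whole principal open neighbourhood, that $R=\varinjlim\mathcal O'_\alpha$ after handling the kernels of $A_\alpha\to R$, and — most delicately — that the set of height one primes of $\mathcal O'_\alpha$ lying over $(f_\alpha)$ has genuinely stabilized to $\{\mathfrak q_{1,\alpha},\dots,\mathfrak q_{s,\alpha}\}$, so that no extra codimension-one ramification of $\bar c_\alpha$, invisible to $R$, can survive. This rests on the noetherian finiteness of the minimal primes over $(f_\alpha)$ and on careful bookkeeping of height one primes along the colimit; once it is in place, Theorem \ref{NisnevichCor} guarantees that a lift, once produced over $\mathcal O'_\alpha$, is compatibly defined on each localization, and the argument closes.
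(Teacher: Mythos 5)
Your overall route -- Popescu desingularization, descent of all data to a smooth finite level $A_\alpha$, unramifiedness at that level, and then the geometric purity theorem -- is the same as the paper's. But there is a genuine gap at the very last step, the ``push the result forward to $\mathcal F(R)$''. The structure maps $A_\alpha\to R$ produced by Popescu's theorem need not be injective, and your parenthetical fix -- ``the standard passage to the quotient by the kernel of $A_\alpha\to R$'' -- is not available: dividing $A_\alpha$ by $\mathfrak a_\alpha=\ker(A_\alpha\to R)$ destroys smoothness over $k_0$, which is the entire reason for invoking Popescu. Concretely, once you have a lift $\eta\in C(\mathcal O'_\alpha)$ with $\bar\eta=\bar c_\alpha$ in $\mathcal F(K_\alpha)$, this equality is witnessed by some $g\in G(K_\alpha)$ defined only on a nonempty open of $\operatorname{Spec}(A_\alpha)$, and that open may miss $V(\mathfrak a_\alpha)$ entirely; since the map $A_\alpha\to L$ factors through the residue field $k_\alpha$ of $B_\alpha=(A_\alpha)_{\mathfrak a_\alpha}$ rather than through $K_\alpha$, the generic equality does not transfer. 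The paper closes exactly this hole with Lemma \ref{WeakGrothendieck} (a specialization statement: two classes in $\mathcal F(B)$ for $B$ regular local that agree in $\mathcal F(\operatorname{Frac} B)$ already agree in $\mathcal F(B/\mathfrak m)$, proved by induction on $\dim B$ using a regular parameter and the rank--one case, Theorem \ref{NisnevichCor}). Some such lemma is indispensable and is absent from your argument.

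A second, smaller issue: your finite-level unramifiedness is asserted rather than proved, and the mechanism you propose (stabilization of the set of height-one primes over $(f_\alpha)$ along the colimit) is not how the paper's Lemma \ref{LiftToFiniteLevel} works and is not obviously achievable. The paper instead arranges the $A_\alpha$ to be regular \emph{local} (hence factorial) with \emph{local} structure maps $\varphi_\alpha\colon A_\alpha\to R$, writes any height-one prime of $A_\alpha$ containing $f_\alpha$ as $(q_\alpha)$ with $q_\alpha r_\alpha=f_\alpha$, observes that the image $q\in R$ is a nonunit dividing $f$ and hence lies in some $\mathfrak p_i$, and then uses the descended witness $\xi_{i,\alpha}\in C(A_{\alpha,h_{i,\alpha}})$ with $h_{i,\alpha}\notin(q_\alpha)$. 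You should supply this (or an equivalent) argument. On the credit side, your reduction of the non-closed point $y_\alpha$ to a closed point over the infinite field $k_0(t_1,\dots,t_m)$, followed by the purity theorem of \cite{Pa2}, is correct and addresses a point the paper itself passes over silently when it invokes Theorem A for a local ring that need not be the local ring of a closed point.
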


\begin{proof}
To prove Theorem B
we now recall a celebrated result of Dorin
Popescu (see \cite{P}
or, for a self-contained proof, \cite{Sw}).

Let $k$ be a field and $R$ a local $k$-algebra. We say that $R$ is
{\it geometrically regular} if $k'\otimes_kR$ is regular for any
finite extension $k'$ of $k$. A ring homomorphism $A\to R$ is
called {\it geometrically regular} if it is flat and for each
prime ideal $\mathfrak q$ of $R$ lying over $\mathfrak p$, $R_\mathfrak q/\mathfrak p R_\mathfrak q =
k(\mathfrak p)\otimes_AR_\mathfrak q$ is geometrically regular over
$k(\mathfrak p)=A_\mathfrak p/\mathfrak p_\mathfrak p$.

Observe that any regular local ring containing a {\bf perfect} field $k$ is
geometrically regular over $k$.

\begin{thm}[Popescu's theorem] A  homomorphism $A\to R$  of
noetherian rings is geometrically regular if and only if $R$ is a
filtered direct limit of smooth $A$-algebras.
\end{thm}

{\it Proof of Theorem B.}
Let $R$ be a regular local ring
containing {\bf a field $k$.}
If $char(k)$ is zero then $k$ is perfect.
If $char(k)=p>0$, then we replace $k$ with the field $\mathbb F_p$.
In both cases $k$ is perfect.
Since $k$ is perfect
one can apply Popescu's theorem. So, $R$ can be presented as a filtered
direct limit of smooth $k$-algebras $A_{\alpha}$ over the field $k$.
We
first observe that we may replace the direct system of the
$A_\alpha$'s by a system of  essentially smooth local
$k$-algebras. In fact, if
${\mathfrak  m}$
are all maximal ideal of $R$ and
$S=R- {\mathfrak  m}$
we can replace each
$A_\alpha$
by
$\left(A_\alpha\right)_{S_{\alpha}}$,
where
$S_\alpha=S \cap A_\alpha$. Note
that in this case the canonical morphisms
$\varphi_\alpha:A_\alpha\to R$
are local (sends the maximal ideal to the maximal one) and every
$A_\alpha$
is a regular local ring, in particular a factorial ring.

Let now $L$ be the field of fractions of $R$ and, for each
$\alpha$,  let $K_\alpha$ be the field of fractions of $A_\alpha$.
For each index $\alpha$ let
$\mathfrak a_{\alpha}$
be the kernel of the map
$\varphi_{\alpha}: A_{\alpha} \to R$
and
$B_{\alpha}=(A_{\alpha})_{\mathfrak a_{\alpha}}$.
Clearly, for each
$\alpha$,  $K_{\alpha}$
is the field of fractions of
$B_{\alpha}$.
The composition map
$A_{\alpha} \to R \to L$
factors through $B_{\alpha}$
and hence it also factors through the residue field $k_{\alpha}$
of $B_{\alpha}$.
Since $R$ is a filtering direct limit of the $A_{\alpha}$'s
we see that $L$ is a filtering direct limit of the $B_{\alpha}$'s.
We will write
$\psi_{\alpha}$
for the canonical morphism
$B_{\alpha} \to L$.

Let $\xi \in C(L)$ be such that the class
$\bar \xi \in {\cal F}(L)$
is $R$-unramified. We need the following two lemmas.
\begin{lem}
\label{WeakGrothendieck}
Let $B$ be a regular local ring
and let
$K$ be its field of fractions.
Let ${\mathfrak m}$ be a maximal ideal of $B$
and
$\bar B= B/{\mathfrak m}$.
For an element
$\theta \in {\cal F}(B)$
write
$\bar \theta$ for its image in
${\cal F}(\bar B)$
and
$\theta_K$
for its image in
${\cal F}(K)$.
Let
$\eta, \rho \in {\cal F}(B)$
be such that
$\eta_K = \rho_K \in {\cal F}(K)$.
Then
$\bar \eta = \bar \rho \in {\cal F}(\bar B)$.
\end{lem} 

\begin{lem}
\label{LiftToFiniteLevel}
There exists an index $\alpha$ and an element
$\xi_{\alpha} \in C(B_{\alpha})$
such that
$\psi_{\alpha}(\xi_{\alpha})=\xi$
and the class
$\bar \xi_{\alpha} \in {\cal F}(K_{\alpha})$
is $A_{\alpha}$-unramified.
\end{lem}
Assuming these two Lemmas we complete the proof as follows.
Consider a commutative diagram
$$
\xymatrix{
    A_{\alpha}  \ar[d]\ar[rr]^-{\varphi_{\alpha}}\ar[d]_-{} && R \ar[d]^-{} &   \\
     B_{\alpha} \ar[d]\ar[r]^{}  & k_{\alpha} \ar[r]^{} & L \\
     K_{\alpha}.
    }
$$
By Lemma
\ref{LiftToFiniteLevel}
the class
$\bar \xi_{\alpha} \in {\cal F}(K_{\alpha})$
is $A_{\alpha}$-unramified.
Hence by Theorem $A$ there exists an element
$\eta \in C(A_{\alpha})$
such that
$\bar \xi_{\alpha}=\bar \eta \in {\cal F}(K_{\alpha})$.
By Lemma
\ref{WeakGrothendieck}
the elements
$\bar \xi_{\alpha}$
and
$\bar \eta$
have the same image in ${\cal F}(k_{\alpha})$.
Hence $\bar \xi \in {\cal F}(L)$ coincides with
the image of the element $\varphi_{\alpha} (\bar \eta)$
in ${\cal F}(L)$.
It remains to prove the two Lemmas.

{\it Proof of Lemma \ref{WeakGrothendieck}.}
Induction on $dim (B)$. The case of dimension $1$ follows from Theorem
\ref{NisnevichCor} applied to the local ring
$B$. To prove the general case choose an $f \in B$
such that $\eta = \rho \in {\cal F}(B_f)$. Let $\pi \in B$
be such that
$\pi$ is a regular parameter in
$B$, having no common factors with $f$. Let
$B^{\prime}= B/\pi B$. Then for the image
$(\eta - \rho)^{\prime}$ of $\eta - \rho$
in
${\cal F}(B^{\prime})$
we have
$(\eta - \rho)^{\prime}_f =0
\in {\cal F}(B^{\prime}_{f})$.
By the inductive hypotheses one has
$\overline {(\eta - \rho)^{\prime}}=0 \in {\cal F}(\bar B)$.
Since
$\overline {(\eta - \rho)}= \overline {(\eta - \rho)^{\prime}} \in {\cal F}(\bar B)$,
one has
$\bar \eta = \bar \rho \in {\cal F}(\bar B)$.

{\it Proof of Lemma \ref{LiftToFiniteLevel}.}

Choose an $f \in R$ such that $\xi$ is defined over $R_f$.
Then $\xi$ is ramified at most at those hight one primes
$\mathfrak p_1, \dots, \mathfrak p_r$
which contains $f$. Since the class
$\bar \xi \in {\cal F}(L)$
is $R$-unramified there exists, for any
$\mathfrak p_i$, an element
$\sigma_i \in G(L)$
and an element
$\xi_i \in C(R_{\mathfrak p_i})$
such that
$\xi=\mu(\sigma_i) \xi_i \in C(L)$.
We may assume that $\xi_i$ is defined over $R_{h_i}$
for some
$h_i \in R - \mathfrak p_i$
and that $\sigma_i$ is defined over $R_{g_i}$
for some $g_i \in R$.

We can find an index $\alpha$ such that $A_{\alpha}$
contains lifts
$f_{\alpha}, h_{1,\alpha}, \dots, h_{r,\alpha}, g_{1\alpha}, \dots, g_{r,\alpha}$
and moreover
\begin{itemize}
\item[(1)]
$C(A_{\alpha, f_{\alpha}})$ contains a lift $\xi_{\alpha}$ of $\xi$,
\item[(2)]
$C(A_{\alpha, h_{i,\alpha}})$ contains a lift of $\xi_{i,\alpha}$ of $\xi_i$,
\item[(3)]
$G(A_{\alpha, g_{i,\alpha}})$ contains a lift of $\sigma_{i,\alpha}$ of $\sigma_i$.
\end{itemize}
Since none of the
$f_{\alpha}, h_{1,\alpha}, \dots, h_{r,\alpha}, g_{1\alpha}, \dots, g_{r,\alpha}$
 vanishes in $R$, the elements
$\xi_{\alpha}, \xi_{1,\alpha}, \dots, \xi_{ir,\alpha}$
and
$\sigma_{1,\alpha}, \dots, \sigma_{r,\alpha}$
may be regarded as elements of
$C(B_{\alpha})$
and
$G(B_{\alpha})$
respectively.

We know that
$\xi_{i,\alpha} \mu (\sigma_{i,\alpha})$
and
$\xi_{\alpha}$
map to the same element in $C(L)$.
Hence replacing $\alpha$ by a larger index, we may assume that
$\xi_{\alpha}= \xi_{i,\alpha} \mu (\sigma_{i,\alpha}) \in C(B_{\alpha})$.
We claim that the class
$\bar \xi_{\alpha} \in {\cal F}(K_{\alpha})$
is $A_{\alpha}$-unramified.
To prove this note that
the only primes at which $\bar \xi_{\alpha}$ could be ramified are those which divide
$f_{\alpha}$. Let
$\mathfrak q_{\alpha}$
be one of them. Check that $\bar \xi_{\alpha}$ is
unramified at $\mathfrak q_{\alpha}$. Let $q_{\alpha} \in A_{\alpha}$ be
a prime element such that
$q_{\alpha}A_{\alpha}=\mathfrak q_{\alpha}$. Then
$q_{\alpha}r_{\alpha} = f_{\alpha}$
for an element
$r_{\alpha}$.
Thus $qr=f \in R$ for the images of $q_{\alpha}$ and $r_{\alpha}$ in $R$.
Since the homomorphism $\varphi_{\alpha}: A_{\alpha} \to R$ is local,
$q \in \mathfrak m_R$. The relation $qr=f$ shows that
$q \in \mathfrak p_i$ for some index $i$. Thus
$q_{\alpha} \in \varphi_{\alpha}^{-1}(\mathfrak p_i)$
and
$\mathfrak q_{\alpha} \subset \varphi_{\alpha}^{-1}(\mathfrak p_i)$.
On the other hand
$h_{i, \alpha} \in A_{\alpha} - \varphi_{\alpha}^{-1}(\mathfrak p_i)$,
because
$h_i \in R - \mathfrak p_i$. Thus
$h_{i, \alpha} \in A_{\alpha} - \mathfrak q_{\alpha}$.
Now the relation
$\xi_{\alpha}= \xi_{i,\alpha} \mu (\sigma_{i,\alpha}) \in C(B_{\alpha})$
with
$\xi_{i,\alpha} \in C(A_{\alpha, h_{i,\alpha}})$
shows that
$\bar \xi_{\alpha}$ is unramified at
$\mathfrak q_{\alpha}$.
Thus
$\bar \xi_{\alpha}$
is unramified at each hight one prime in $A_{\alpha}$ containing $f_{\alpha}$.
Since
$\xi_{\alpha} \in C(A_{\alpha, f_{\alpha}})$
we conclude that
$\bar \xi_{\alpha}$
is $A_{\alpha}$-unramified.
The lemma follows. The theorem is proved.

\end{proof}

\section{One more purity result}
\label{SectionOneMorePurityTheorem}
In this Section we prove another purity theorem for reductive group schemes.
Let $k$ be {\bf a finite field}. Let $\mathcal O$ be the semi-local ring of finitely many {\bf closed} points
on a $k$-smooth {\bf irreducible affine} $k$-variety $X$ and let $K$ be the field of fractions of $\mathcal O$.
Let $G$ be a semi-simple $\mathcal O$-group scheme.
Let
$i: Z \hra G$ be a closed subgroup scheme of the center $Cent(G)$.
{\bf It is known that $Z$ is of multiplicative type}.
Let $G'=G/Z$ be the factor group,
$\pi: G \to G'$ be the projection.
It is known that $\pi$ is finite surjective and strictly flat. Thus
the sequence of $\mathcal O$-group schemes
\begin{equation}
\label{ZandGndGprime}
\{1\} \to Z \xra{i} G \xra{\pi} G^{\prime} \to \{1\}
\end{equation}
induces an exact sequence of group sheaves in $\text{fppt}$-topology.
Thus for every $\mathcal O$-algebra $R$ the sequence
(\ref{ZandGndGprime})
gives rise to a boundary operator
\begin{equation}
\label{boundary}
\delta_{\pi,R}: G'(R) \to \textrm{H}^1_{\text{fppt}}(R,Z)
\end{equation}
One can check that it is a group homomorphism
(compare \cite[Ch.II, \S 5.6, Cor.2]{Se}).
Set
\begin{equation}
\label{AnotherFunctor}
{\cal F}(R)= \textrm{H}^1_{\text{fttp}}(R,Z)/ Im(\delta_{\pi,R}).
\end{equation}
Clearly we get a functor on the category of $\mathcal O$-algebras.
\begin{thm}
\label{PurityForSubgroup}
Let $\mathcal O$ be the semi-local ring of finitely many {\bf closed} points
on a $k$-smooth irreducible {\bf affine} $k$-variety $X$.
Let $G$ be a semi-simple $\mathcal O$-group scheme.
Let
$i: Z \hra G$ be a closed subgroup scheme of the center $Cent(G)$.
Then
the functor
${\cal F}$
on the category
$\mathcal O$-algebras
given by
(\ref{AnotherFunctor})
satisfies purity for the ring $\mathcal O$
regarded as an
$\mathcal O$-algebra
via the identity map.

If $K$ is the fraction field of $\mathcal O$ this statement can be restated in an explicit way
as follows:
given an element
$\xi \in \textrm{H}^1_{\text{fppt}}(K,Z)$
suppose that for each height $1$ prime ideal
$\mathfrak p$ in $\mathcal O$ there exists
$\xi_{\mathfrak p } \in \textrm{H}^1_{\text{fppt}}(\mathcal O_{\mathfrak p }, Z)$,
$g_{\mathfrak p } \in G'(K)$
with
$\xi=\xi_{\mathfrak p } + \delta_{\pi}(g_{\mathfrak p }) \in \textrm{H}^1_{\text{fppt}}(K,Z)$.
Then there exists
$\xi_{\mathfrak m } \in \textrm{H}^1_{\text{fppt}}(\mathcal O, Z)$, $g_{\mathfrak m } \in G'(K)$,
such that
$$
\xi=\xi_{\mathfrak m } + \delta_{\pi}(g_{\mathfrak m }) \in \textrm{H}^1_{\text{fppt}}(K,Z).
$$

\end{thm}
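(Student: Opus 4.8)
The plan is to exhibit the functor $\mathcal F$ of (\ref{AnotherFunctor}) as an instance of the functor occurring in Theorem \ref{TheoremA}, and then simply to quote that theorem. Since $Z$ is of multiplicative type and $\mathcal O$ is regular, hence normal, $Z$ is isotrivial; consequently one can choose a closed immersion $j\colon Z\hookrightarrow T$ of $\mathcal O$-group schemes into a \emph{quasi-trivial} $\mathcal O$-torus $T$, i.e. a finite product of Weil restrictions $\Res_{A/\mathcal O}\mathbb G_m$ along finite \'etale extensions $A/\mathcal O$ (standard for groups of multiplicative type over a normal connected base). Since $Z$ is central in $G$, the fppt quotient $\bar G:=(G\times T)/Z$ by the anti-diagonally embedded copy of $Z$ exists and is a reductive $\mathcal O$-group scheme. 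It sits in two short exact sequences of $\mathcal O$-group schemes which, together with $\{1\}\to Z\xrightarrow{i}G\xrightarrow{\pi}G'\to\{1\}$ and $\{1\}\to Z\xrightarrow{j}T\to T/Z\to\{1\}$, form a commutative $3\times 3$ diagram:
\begin{equation}
\label{TwoSeq}
\{1\}\to G\longrightarrow\bar G\xrightarrow{\ \mu\ }T/Z\to\{1\},\qquad \{1\}\to T\longrightarrow\bar G\longrightarrow G'\to\{1\}.
\end{equation}
Here $T/Z$ is again a torus (its character lattice is the kernel of $X^*(T)\to X^*(Z)$, hence torsion free), the morphism $\mu$ is smooth because its kernel $G$ is smooth, and $\ker(\mu)=G$ is semi-simple, hence reductive. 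Thus $\mu\colon\bar G\to T/Z$ satisfies all the hypotheses of Theorem \ref{TheoremA}, so the functor $\mathcal F_\mu\colon S\mapsto (T/Z)(S)/\mu(\bar G(S))$ satisfies purity for $\mathcal O$.

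Next I would identify $\mathcal F_\mu$ with $\mathcal F$ on the rings that matter for purity. For every semi-local $\mathcal O$-algebra $S$, and for every field $S$ containing $K$, one has $\textrm{H}^1_{\text{fppt}}(S,T)=0$, since $T$ is quasi-trivial and the Picard group of a semi-local ring, resp. of a finite product of fields, vanishes; in particular this applies to $S\in\{\mathcal O,\ \mathcal O_{\mathfrak p}\ (\text{ht}\,\mathfrak p=1),\ K\}$. For such an $S$, taking $S$-points in the first sequence of (\ref{TwoSeq}) and using the associated exact sequence of pointed sets, the connecting map induces a bijection
\begin{equation}
\label{IdentifyFmu}
\mathcal F_\mu(S)=(T/Z)(S)/\mu(\bar G(S))\ \xrightarrow{\ \sim\ }\ \ker\big[\textrm{H}^1_{\text{fppt}}(S,G)\to\textrm{H}^1_{\text{fppt}}(S,\bar G)\big].
\end{equation}
Since $\textrm{H}^1_{\text{fppt}}(S,T)=0$ and the composite $Z\hookrightarrow T\hookrightarrow\bar G$ agrees (up to inversion) with $Z\xrightarrow{i}G\to\bar G$, the map $i_*\colon\textrm{H}^1_{\text{fppt}}(S,Z)\to\textrm{H}^1_{\text{fppt}}(S,G)$ takes values in the kernel appearing in (\ref{IdentifyFmu}); conversely, mapping the first sequence of (\ref{TwoSeq}) onto $\{1\}\to G'\xrightarrow{=}G'\to\{1\}\to\{1\}$ shows that this kernel is contained in $\ker[\textrm{H}^1_{\text{fppt}}(S,G)\to\textrm{H}^1_{\text{fppt}}(S,G')]=i_*\big(\textrm{H}^1_{\text{fppt}}(S,Z)\big)$. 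Hence the kernel in (\ref{IdentifyFmu}) equals $i_*\big(\textrm{H}^1_{\text{fppt}}(S,Z)\big)$, which, as $Z$ is central in $G$, is in bijection with $\textrm{H}^1_{\text{fppt}}(S,Z)/Im(\delta_{\pi,S})=\mathcal F(S)$. All the maps involved are natural in $S$, so one obtains an isomorphism $\mathcal F_\mu(S)\cong\mathcal F(S)$ natural in $S$ and, in particular, compatible with the localization homomorphisms $\mathcal O\to\mathcal O_{\mathfrak p}\to K$.

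Finally, purity for $\mathcal O$ amounts to the single equality $\bigcap_{\text{ht}\,\mathfrak p=1}Im[\mathcal F(\mathcal O_{\mathfrak p})\to\mathcal F(K)]=Im[\mathcal F(\mathcal O)\to\mathcal F(K)]$, which involves only the values of the functor at $\mathcal O$, at $K$, and at the height one localizations $\mathcal O_{\mathfrak p}$ --- all of them semi-local rings or fields. Transporting this equality across the natural isomorphisms of the previous paragraph, it follows at once from the corresponding equality for $\mathcal F_\mu$, which is Theorem \ref{TheoremA}; the explicit reformulation in the statement is just an unwinding of this equality.

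The hard part will be the second paragraph. Constructing $\bar G$ as a reductive $\mathcal O$-group scheme with $\mu$ smooth and $\ker(\mu)$ reductive is routine, but one must be careful that the connecting maps in \emph{non-abelian} fppt cohomology really do induce the asserted bijections of pointed sets (in particular that the fibres of $i_*$ over its image are exactly the cosets of $Im(\delta_{\pi,S})$, which uses the centrality of $Z$) and that every identification is natural in $S$. The passage through a \emph{quasi-trivial} torus $T$ is essential: it is precisely the vanishing $\textrm{H}^1_{\text{fppt}}(S,T)=0$ for semi-local $S$ and for fields that forces $\mathcal F_\mu(S)\cong\mathcal F(S)$; this cannot be expected for arbitrary $\mathcal O$-algebras $S$, but, as noted, only semi-local rings and fields enter the purity statement.
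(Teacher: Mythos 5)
Your proposal is correct and follows essentially the same route as the paper: embed $Z$ into a quasi-trivial (permutation) torus $T^{+}=R_{A/\mathcal O}(\mathbb G_{m,A})$, form $G^{+}=(G\times T^{+})/Z$ and $T=T^{+}/Z$, use the resulting $3\times 3$ diagram together with Hilbert 90 for the semi-local algebra $A$ to identify $\textrm{H}^1_{\text{fppt}}(S,Z)/Im(\delta_{\pi,S})$ with $T(S)/\mu^{+}(G^{+}(S))$ naturally in the semi-local rings and fields that enter the purity statement, and then quote Theorem A for $\mu^{+}:G^{+}\to T$. The only differences are cosmetic (your remark that the identification need only hold at $\mathcal O$, $\mathcal O_{\mathfrak p}$ and $K$ is made implicitly in the paper).
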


\begin{proof}
The group $Z$ is of  multiplicative type.
So we can find a finite \'{e}tale $\mathcal O$-algebra $A$ and
a closed embedding
$Z \hra R_{A/{\mathcal O}}(\mathbb G_{m,\ A})$
into the permutation torus
$T^{+}=R_{A/\mathcal O}(\mathbb G_{m,\ A})$.
Let
$G^{+}=(G \times T^{+})/Z$
and
$T=T^{+}/Z$,
where
$Z$ is embedded in
$G \times T^{+}$
diagonally.
Clearly
$G^{+}/G=T$.
Consider a commutative diagram
$$
\xymatrix {
{}           & \{1\}                       & \{1\} \\
{}           & {G'} \ar[r]^{id}  \ar[u]          & {G'}   \ar[u]          \\
\{1\} \ar[r] & {G} \ar[r]^{j^+} \ar[u]^{\pi} & {G^+} \ar[r]^{\mu^+} \ar[u]^{\pi^+} & {T} \ar[r] & \{1\} \\
\{1\} \ar[r] & {Z} \ar[r]^{j} \ar[u]^{i} & {T^+} \ar[r]^{\mu} \ar[u]^{i^+} & {T} \ar[u]_{id} \ar[r] &  \{1\} \\
{}           & \{1\} \ar[u]                      & \{1\} \ar[u]\\
}
$$
with exact rows and columns.
By
\ref{FlatAndEtale}
and Hilbert 90
for the semi-local $\mathcal O$-algebra $A$ one has
$\textrm{H}^1_{\text{fppt}}(\mathcal O,T^+)= \textrm{H}^1_{\text{\'et}}(\mathcal O,T^+)= \textrm{H}^1_{\text{\'et}}(A,\Bbb G_{m,A})=\{* \}$.
So, the latter diagram gives rise to
a commutative diagram of pointed sets
$$
\xymatrix {
{}           & {}                            & {\textrm{H}^1_{\text{fppt}}(\mathcal O,G')} \ar[r]^{id}            & {\textrm{H}^1_{\text{fppt}}(\mathcal O,G')}            \\
{G^+(\mathcal O)} \ar[r]^{\mu^+_\mathcal O} & {T(\mathcal O)} \ar[r]^{\delta^+_\mathcal O}  & {\textrm{H}^1_{\text{fppt}}(\mathcal O,G)} \ar[r]^{j^+_*} \ar[u]^{\pi_*} & {\textrm{H}^1_{\text{fppt}}(\mathcal O,G^+)} \ar[u]^{\pi^+_*}  \\
{T^+(\mathcal O)} \ar[r]^{\mu_\mathcal O} \ar[u]^{i^+_*} & {T(\mathcal O)} \ar[r]^{\delta_\mathcal O} \ar[u]^{id} & {\textrm{H}^1_{\text{fppt}}(\mathcal O,Z)} \ar[r]^{\mu} \ar[u]^{i_*} & {\{*\}} \ar[u]^{i^+_*}  \\
{}           & {} & {G'(\mathcal O)} \ar[u]^{\delta_{\pi}}                      & {} \\
}
$$
with exact rows and columns. It follows that
$\pi^+_*$ has  trivial kernel and one has a chain of group isomorphisms
$$
\textrm{H}^1_{\text{fppt}}(\mathcal O,Z) / Im (\delta_{\pi,\mathcal O})= ker (\pi_*)= ker (j^+_*) = T(\mathcal O)/\mu^+ (G^+(\mathcal O)).
$$
Clearly these isomorphisms respect $\mathcal O$-homomorphisms of semi-local $\mathcal O$-algebras.

The morphism $\mu^{+}: G^{+} \to T$ is a smooth $\mathcal O$-morphism of reductive
$\mathcal O$-group schemes, with the torus $T$. The kernel
$ker(\mu^{+})$
is equal to $G$ and $G$ is a reductive $\mathcal O$-group scheme.
The functor
$\mathcal O^{\prime} \mapsto T(\mathcal O^{\prime})/\mu^+ (G^+(\mathcal O^{\prime}))$
satisfies purity for the regular semi-local $\mathcal O$-algebra $\mathcal O$ by Theorem (A).
Hence the functor
$\mathcal O^{\prime} \mapsto \textrm{H}^1_{\text{fppt}}(\mathcal O^{\prime},Z) / Im (\delta_{\pi,\mathcal O^{\prime}})$
satisfies purity for $\mathcal O$.

\end{proof}

\section{Proof of Theorem \ref{MainThmGeometric}}
\label{SectionGr_SerreConj}

\begin{proof}[Proof of Theorem \ref{MainThmGeometric} for the case of semi-simple reductive group scheme]
Let $\mathcal O$ and $G$ be the same as in Theorem \ref{MainThmGeometric} and assume additionally that $G$ is semi-simple.
We need to prove that
\begin{equation}
\label{Gr_SerreForGsemisimple}
ker[H^1_{\text{\'et}}(\mathcal O,G) \to H^1_{\text{\'et}}(K,G)]=* .
\end{equation}

Let
$G^{sc}_{}$
be the corresponding simply-connected semi-simple $\mathcal O$-group scheme
and let
$\pi: G^{sc}_{} \to G$
be the corresponding $\mathcal O$-group scheme morphism.
Let $Z=ker(\pi)$.
It is known that
$Z$
is contained in the center $Cent(G^{sc}_{})$ of
$G^{sc}_{}$ and
$Z$
is a finite group scheme of multiplicative type.
It is known that
$G_{}=G^{sc}_{}/Z$
and $\pi$ is finite surjective and strictly flat.
Thus the sequence of $\mathcal O$-group schemes
\begin{equation}
\label{ZenterCsDer}
\{1\} \to Z \xra{i} G^{sc}_{} \xra{\pi} G_{} \to \{1\},
\end{equation}
gives rise to an exact sequence of pointed sets
\begin{equation}
\label{H1OfZenterCsDer1}
H^1_{\text{fppt}}(\mathcal O, Z)/\partial (G_{}(\mathcal O)) \to H^1_{\text{fppt}}(\mathcal O,G^{sc}_{}) \to H^1_{\text{fppt}}(\mathcal O,G_{})
\to H^2_{\text{fppt}}(\mathcal O,Z)
\end{equation}

By the Theorem
\ref{PurityForSubgroup}
the functor
$$
{\cal F}(R)= \textrm{H}^1_{\text{fppt}}(R,Z)/ Im(\delta_{\pi,R}) = \textrm{H}^1_{\text{fppt}}(R,Z)/\partial (G_{}(R)).
$$
satisfies purity for the ring $\mathcal O$.
The following result is known (see \cite[Thm.11.7]{Gr3})
\begin{lem}
\label{FlatAndEtale}
Let $R$ be a noetherian ring. Then for a reductive $R$-group scheme $H$ and for $n=0, 1$ the canonical map
$\textrm{H}^n_{\text{\'et}}(R, H) \to \textrm{H}^n_{\text{fppt}}(R, H)$
is a bijection of pointed sets. For a $R$-tori $T$ and for each integer $n \geq 0$
the canonical map
$\textrm{H}^n_{\text{\'et}}(R, T) \to \textrm{H}^n_{\text{fppt}}(R, T)$
is an isomorphism.
\end{lem}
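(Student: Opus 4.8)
The plan is to treat the two assertions of the Lemma separately and to reduce the whole statement, as far as possible, to a single change-of-topology argument; only the last reduction relies on the cited theorem of Grothendieck.

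\emph{Degrees $0$ and $1$.} In degree $0$ there is nothing to do, since $\mathrm H^0_{\text{\'et}}(R,H)=H(R)=\mathrm H^0_{\text{fppt}}(R,H)$ for any affine $R$-group scheme $H$, and likewise for a torus. For $n=1$ I would use that $\mathrm H^1_\tau(R,H)$ classifies $\tau$-locally trivial $H$-torsors over $\Spec R$, so it suffices to show that every fppf $H$-torsor $P$ is already \'etale-locally trivial whenever $H$ is smooth; reductive $R$-group schemes, and in particular $R$-tori, are smooth and affine over $R$, so this applies. Now $P\to\Spec R$ is faithfully flat and of finite presentation (an fppf torsor is an fppf cover of its base) and, being an fppf twist of $H$, it is smooth over $R$ by fppf descent of smoothness; a smooth surjective morphism of finite presentation admits sections \'etale-locally on the target (EGA IV, 17.16.3), so $P$ is split after an \'etale cover of $\Spec R$. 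Hence $\mathrm H^1_{\text{\'et}}(R,H)\to\mathrm H^1_{\text{fppt}}(R,H)$ is a bijection of pointed sets, and a group isomorphism when $H$ is commutative. This already supplies everything used in Sections \ref{SectionOneMorePurityTheorem} and \ref{SectionGr_SerreConj} in degrees $\le 1$.

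\emph{Tori in arbitrary degree.} For the last assertion one must compare the two cohomologies in all degrees, and the natural device is the morphism of sites $\epsilon\colon(\Spec R)_{\text{fppt}}\to(\Spec R)_{\text{\'et}}$ induced by the fact that every \'etale cover is an fppf cover. Its Leray spectral sequence reads $\mathrm H^p_{\text{\'et}}(R,R^q\epsilon_*\underline T)\Rightarrow\mathrm H^{p+q}_{\text{fppt}}(R,\underline T)$, so the claim follows once one knows that $\epsilon_*\underline T=\underline T$ on the \'etale site --- which is clear, since the functor of points of a scheme is an fppf sheaf whose restriction to the \'etale site is the \'etale sheaf it represents --- together with $R^q\epsilon_*\underline T=0$ for $q\ge 1$. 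The sheaf $R^q\epsilon_*\underline T$ is the \'etale sheafification of $U\mapsto\mathrm H^q_{\text{fppt}}(U,T)$, so this vanishing is local on the \'etale site and reduces, by passing to stalks, to showing $\mathrm H^q_{\text{fppt}}(A,T)=0$ for every strictly henselian local ring $A$ (which, since $R$ is noetherian, may be taken noetherian) and every $q\ge 1$. Over such an $A$ the torus $T$ is split (it is isotrivial, and $A$ has no nontrivial finite \'etale covers), so the problem reduces to $T=\mathbb G_m$, where $\mathrm H^1_{\text{fppt}}(A,\mathbb G_m)=\Pic(A)=0$.

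\emph{Where the difficulty sits.} The genuinely nontrivial ingredient is the vanishing of $\mathrm H^q_{\text{fppt}}(A,\mathbb G_m)$ for $A$ strictly henselian local and $q\ge 2$; for $q=2$ this is the triviality of the Brauer group of such a ring, and the higher degrees are more delicate. This is exactly the content of Grothendieck's comparison theorem, so in practice I would simply invoke \cite[Thm.~11.7]{Gr3} at this point, the preceding two paragraphs serving to record how the full statement of the Lemma is assembled from that input. Accordingly I expect the $n\le 1$ part to be entirely routine and the hard part to be confined to, and delegated to, \cite[Thm.~11.7]{Gr3}.
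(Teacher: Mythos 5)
Your argument is correct, but it is worth knowing that the paper does not prove this lemma at all: it is stated as a known result with a bare citation of \cite[Thm.11.7]{Gr3}, so there is no in-text proof to compare against. What you have written is essentially a reconstruction of Grothendieck's own argument, and it is sound. The $n\le 1$ part is self-contained and routine, as you say: in degree $0$ both sides are $H(R)$, and in degree $1$ the key point is that an fppf torsor $P$ under a smooth affine group scheme is itself an affine $R$-scheme (fppf descent of affineness, which you use implicitly when treating $P\to\Spec R$ as a scheme morphism) and is smooth by descent of smoothness, hence acquires a section after an \'etale cover; injectivity is automatic since an isomorphism of torsors is a topology-independent datum. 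For tori in all degrees your Leray/stalk reduction is the standard one, and you correctly isolate the only genuinely nontrivial input, namely the vanishing of $\mathrm H^q_{\text{fppt}}(A,\mathbb G_m)$ for $q\ge 1$ over a strictly henselian local ring $A$ (note that on the \'etale side everything vanishes for free, since the small \'etale site of a strictly henselian local ring is trivial, so the flat-side vanishing really is the whole content). Delegating that last step to \cite[Thm.11.7]{Gr3} is not circular --- the reduction from arbitrary noetherian $R$ and arbitrary tori to $\mathbb G_m$ over strictly henselian rings is real progress --- but it does mean your proof ultimately rests on the same reference the paper invokes for the full statement, so what you have bought is an explicit account of how the lemma follows from that single local computation rather than an independent proof of it.
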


\begin{lem}
\label{Gr_Serre_For_H2_Z}
For the ring $\mathcal O$ above one has
$ker[\textrm{H}^2_{\text{fppt}}(\mathcal O, Z) \to \textrm{H}^2_{\text{fppt}}(K, Z)]= *$.
\end{lem}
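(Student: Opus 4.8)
The plan is to embed $Z$ into a permutation torus and then to deduce the assertion from two classical injectivity facts: one for the Brauer group of a regular domain, and one for $\textrm{H}^{1}$ of a torus over a regular semi-local ring. Since $Z$ is a finite $\mathcal O$-group scheme of multiplicative type, I would first, exactly as in the proof of Theorem~\ref{PurityForSubgroup}, choose a finite \'etale $\mathcal O$-algebra $A$ together with a closed embedding $Z\hra T^{+}:=R_{A/\mathcal O}(\mathbb G_{m,A})$ into the associated permutation torus, and set $T:=T^{+}/Z$, a torus over $\mathcal O$. The sequence of commutative $\mathcal O$-group schemes $1\to Z\to T^{+}\to T\to 1$ is exact for the $\text{fppt}$-topology, hence produces, over $\mathcal O$ and over $K$, two long exact cohomology sequences which I would arrange into a commutative ladder. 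By Lemma~\ref{FlatAndEtale} the $\text{fppt}$-cohomology of $T$ and of $T^{+}$ agrees with their \'etale cohomology in every degree, and by Shapiro's lemma $\textrm{H}^{i}_{\text{\'et}}(-,T^{+})\cong\textrm{H}^{i}_{\text{\'et}}((-)\times_{\mathcal O}A,\mathbb G_m)$.

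Next I would collect the inputs. The algebra $A$ is semi-local, being finite over the semi-local ring $\mathcal O$, and $A_K:=A\otimes_{\mathcal O}K$ is a finite product $\prod_i F_i$ of finite separable field extensions of $K$; consequently $\Pic(A)=0=\Pic(A_K)$, that is $\textrm{H}^{1}_{\text{\'et}}(\mathcal O,T^{+})=0=\textrm{H}^{1}_{\text{\'et}}(K,T^{+})$ by Hilbert~90 and Lemma~\ref{FlatAndEtale}. Moreover $A$ is regular, being \'etale over the regular ring $\mathcal O$; writing $A=\prod_i A_i$ with each $A_i$ a regular semi-local domain with fraction field $F_i$, the restriction map
$$\textrm{H}^{2}_{\text{\'et}}(\mathcal O,T^{+})=\prod_i\textrm{H}^{2}_{\text{\'et}}(A_i,\mathbb G_m)\longrightarrow\prod_i\textrm{H}^{2}_{\text{\'et}}(F_i,\mathbb G_m)=\textrm{H}^{2}_{\text{\'et}}(K,T^{+})$$
is injective, by Grothendieck's theorem that the cohomological Brauer group of a regular domain injects into the Brauer group of its fraction field. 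Finally, since $\mathcal O$ is a regular semi-local domain, the map $\textrm{H}^{1}_{\text{\'et}}(\mathcal O,T)\to\textrm{H}^{1}_{\text{\'et}}(K,T)$ is injective; this is the case of groups of multiplicative type of the Grothendieck--Serre conjecture, established by Colliot-Th\'el\`ene and Sansuc.

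With these ingredients the conclusion is a short diagram chase, which I would run as follows. Let $\xi\in\textrm{H}^{2}_{\text{fppt}}(\mathcal O,Z)$ map to $0$ in $\textrm{H}^{2}_{\text{fppt}}(K,Z)$. Its image in $\textrm{H}^{2}_{\text{fppt}}(\mathcal O,T^{+})$ then maps to $0$ in $\textrm{H}^{2}_{\text{fppt}}(K,T^{+})$, hence vanishes by the Brauer injectivity; so, by exactness of the $\mathcal O$-row, $\xi=\partial_{\mathcal O}(\eta)$ for some $\eta\in\textrm{H}^{1}_{\text{fppt}}(\mathcal O,T)$, where $\partial$ denotes the connecting homomorphism $\textrm{H}^{1}(-,T)\to\textrm{H}^{2}(-,Z)$. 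Writing $\eta_K$ for the image of $\eta$ in $\textrm{H}^{1}_{\text{fppt}}(K,T)$, commutativity of the ladder gives $\partial_{K}(\eta_K)=0$; but $\ker\partial_K$ is the image of $\textrm{H}^{1}_{\text{fppt}}(K,T^{+})=0$, so $\eta_K=0$, and then the injectivity of $\textrm{H}^{1}_{\text{\'et}}(\mathcal O,T)\to\textrm{H}^{1}_{\text{\'et}}(K,T)$ forces $\eta=0$, whence $\xi=0$. In fact this argument yields the stronger statement that $\textrm{H}^{2}_{\text{fppt}}(\mathcal O,Z)\to\textrm{H}^{2}_{\text{fppt}}(K,Z)$ is injective.

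I do not expect a genuine obstacle here: both non-formal ingredients --- Grothendieck's injectivity for the Brauer group of a regular ring and the Colliot-Th\'el\`ene--Sansuc injectivity for $\textrm{H}^{1}$ of a torus over a regular semi-local ring --- are classical, and the rest is bookkeeping with the two long exact sequences. The only point needing slight care is that the finite \'etale algebra $A$ may fail to be connected, which is why the Brauer-group input must be applied componentwise to the factors $A_i$ of $A$.
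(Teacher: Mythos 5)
Your proof is correct and follows the same route as the paper: embed $Z$ into the permutation torus $T^{+}=R_{A/\mathcal O}(\mathbb G_{m,A})$, use the long exact sequence for $1\to Z\to T^{+}\to T\to 1$, the vanishing of $\textrm{H}^1_{\text{\'et}}(A,\mathbb G_m)$ for the semi-local algebra $A$, Grothendieck's Brauer-group injectivity for the regular semi-local $A$, and the Colliot-Th\'el\`ene--Sansuc injectivity of $\textrm{H}^1_{\text{\'et}}(\mathcal O,T)\to\textrm{H}^1_{\text{\'et}}(K,T)$, followed by a diagram chase. The only difference is that you spell out the chase and the componentwise treatment of a possibly disconnected $A$, which the paper leaves implicit.
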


\begin{proof}
In fact, consider the closed embedding
$Z \hra R_{A/{\mathcal O}}(\mathbb G_{m,\ A})$
into the permutation torus
$T^{+}=R_{A/\mathcal O}(\mathbb G_{m,\ A})$
from Theorem
\ref{PurityForSubgroup}.
Set
$T= T^{+}/Z$.
The short sequence of $\mathcal O$-group schemes
$1 \to Z \to T^{+} \to T \to 1$
gives rise to a short exact sequence of group sheaves in the $\text{fppt}$-topology.
That sequence in turn gives rise to a long exact sequence of cohomology groups
\begin{equation}
\label{H2ZandBr}
\dots \to \textrm{H}^1_{\text{fppt}}(\mathcal O, T^{+}) \to \textrm{H}^1_{\text{fppt}}(\mathcal O, T)
\xra{\partial} \textrm{H}^2_{\text{fppt}}(\mathcal O, Z) \to \textrm{H}^2_{\text{fppt}}(\mathcal O, T^{+})
\end{equation}
Firstly note that
$\textrm{H}^1_{\text{fppt}}(\mathcal O, T^{+})=\textrm{H}^1_{\text{\'et}}(\mathcal O, T^{+})= \textrm{H}^1_{\text{\'et}}(A, \Bbb G_{m,A})=0$,
since $A$ is semi-local.
Secondly note that
$\textrm{H}^2_{\text{fppt}}(\mathcal O, T^{+})=\textrm{H}^2_{\text{\'et}}(\mathcal O, T^{+})= \textrm{H}^2_{\text{\'et}}(A, \Bbb G_{m,A})$.
The map
$\textrm{H}^1_{\text{fppt}}(\mathcal O, T) \to \textrm{H}^1_{\text{fppt}}(K, T)$
is injective by Lemma
\ref{FlatAndEtale}
and
\cite{C-T-S}.
The homomorphism
$\textrm{H}^2_{\text{fppt}}(\mathcal O, T^{+}) \to \textrm{H}^2_{\text{fppt}}(K, T^{+})$
coincides with the map
$\textrm{H}^2_{\text{\'et}}(A, \Bbb G_{m,A}) \to \textrm{H}^2_{\text{\'et}}(A \otimes_{\mathcal O} K, \Bbb G_{m,A \otimes_{\mathcal O} K})$.
The latter map is injective, since
$A$ is regular semi-local of geometric type
(see \cite[??]{Gr2}). Now a diagram chaise completes the proof of the Lemma.
\end{proof}
Continue the proof of the equality (\ref{Gr_SerreForGsemisimple}).
We have the exact sequence of pointed sets
\begin{equation}
\label{H1OfZenterCsDer2}
H^1_{\text{fppt}}(\mathcal O, Z)/\partial (G_{}(\mathcal O)) \to H^1_{\text{fppt}}(\mathcal O,G^{sc}_{}) \to H^1_{\text{fppt}}(\mathcal O,G_{})
\to H^2_{\text{fppt}}(\mathcal O,Z)
\end{equation}
and furthermore a commutative diagram
with exact arrows
\begin{equation}
\label{RactangelDiagram}
    \xymatrix{
H^1_{\text{fppt}}(\mathcal O, Z)/\partial (G_{}(\mathcal O)) \ \ \ar[r]^{i_*}\ar[d]_{\alpha} & H^1_{\text{fppt}}(\mathcal O,G^{sc}_{}) \ar[r]^{\pi_*} \ar[d]^{\beta} &
H^1_{\text{fppt}}(\mathcal O,G_{}) \ar[r]^{\partial} \ar[d]_{\gamma} & H^2_{\text{fppt}}(\mathcal O,Z) \ar[d]^{\delta} &\\
H^1_{\text{fppt}}(\mathcal O_{\mathfrak p}, Z)/\partial (G_{}(\mathcal O_{\mathfrak p})) \ \ \ar[r]^{i^{\prime}_*}\ar[d]_{\alpha_{\mathfrak p}} & H^1_{\text{fppt}}(\mathcal O_{\mathfrak p},G^{sc}_{}) \ar[r]^{\pi_*} \ar[d]^{\beta_{\mathfrak p}} &
H^1_{\text{fppt}}(\mathcal O_{\mathfrak p},G_{}) \ar[r]^{\partial} \ar[d]_{\gamma_{\mathfrak p}} & H^2_{\text{fppt}}(\mathcal O_{\mathfrak p},Z) \ar[d]^{\delta_{\mathfrak p}} &\\
H^1_{\text{fppt}}(K, Z)/\partial (G_{}(K)) \ \ \ar[r]^{i^{\prime\prime}_*} & H^1_{\text{fppt}}(K,G^{sc}_{}) \ar[r]^{\pi_*} &
H^1_{\text{fppt}}(K,G_{}) \ar[r]^{\partial}  & H^1_{\text{fppt}}(K,Z) &\\
}
\end{equation}
Here
$\mathfrak p \subset \mathcal O$ is a hight one prime ideal in $\mathcal O$.
The maps $i_*$, $i^{\prime}_*$ and $i^{\prime\prime}_*$ are injective
(compare (\cite[Ch.I,Sect.5, Prop.39 and Cor.1 of Prop.40]{Se})).
Set
$\alpha_K=\alpha_{\mathfrak p} \circ \alpha$,
$\beta_K= \beta_{\mathfrak p} \circ \beta$,
$\gamma_K = \gamma_{\mathfrak p} \circ \gamma$,
$\delta_K = \delta_{\mathfrak p} \circ \delta$.
By a theorem of Nisnevich
\cite{Ni}
and Lemma
\ref{FlatAndEtale}
one has
\begin{equation}
\label{NisnevichForPurity}
ker(\beta_{\mathfrak p})=ker(\gamma_{\mathfrak p})=* \ .
\end{equation}
Thus $ker(\alpha_{\mathfrak p})=*$.
By the assumptions of Theorem \ref{MainThmGeometric}
and by Lemma
\ref{FlatAndEtale}
one has
\begin{equation}
\label{Gr_SerreForG_sc}
ker[H^1_{\text{fppt}}(\mathcal O,G^{sc}_{}) \to H^1_{\text{fppt}}(K,G^{sc}_{})]=* \ .
\end{equation}
By Lemma
\ref{Gr_Serre_For_H2_Z}
the map
$\delta_K$
is injective.
As mentioned right above Lemma
\ref{FlatAndEtale}
the functor
$
{\cal F}(R)= \textrm{H}^1_{\text{fppt}}(R,Z)/ \partial (G_{}(R))
$
satisfies purity for the ring $\mathcal O$.
Now we are ready to make a diagram chaise.

Let
$\xi \in ker(\gamma_K)$,
then
$\partial(\xi) \in ker(\delta_K)$.
By
\cite{C-T-S}
one has
$ker(\delta_K)=*$,
whence
$\partial(\xi)=*$
and
$\xi=\pi_*(\zeta)$
for an
$\zeta \in H^1_{\text{\'et}}(\mathcal O,G^{sc}_{})$.
Since
$\gamma_K(\xi)=*$
and
$ker(\gamma_{\mathfrak p})=*$
we see that
$\gamma(\xi)=*$.
Thus
$\pi_*(\beta(\zeta))=*$
and
$\beta(\zeta)= i^{\prime}_*(\epsilon_{\mathfrak p})$
for an
$\epsilon_{\mathfrak p} \in H^1_{\text{fppt}}(\mathcal O_{\mathfrak p}, Z)/\partial (G_{}(\mathcal O_{\mathfrak p}))$.
A diagram chaise shows that there exists a unique element
$\epsilon_K \in H^1_{\text{fppt}}(K, Z)/\partial (G_{}(K))$
such that
for each hight one prime ideal
$\mathfrak p$ of $\mathcal O$
one has
$$\alpha_{\mathfrak p}(\epsilon_{\mathfrak p})= \epsilon_K \in H^1_{\text{fppt}}(K, Z)/\partial (G_{}(K)).$$
By Purity Theorem
\ref{PurityForSubgroup}
there exists an element
$\epsilon \in H^1_{\text{fppt}}(\mathcal O, Z)/\partial (G_{}(\mathcal O))$
such that
$\alpha_K(\epsilon)=\epsilon_K$.
The element
$\epsilon_K$
has the property that
$i^{\prime\prime}_*(\epsilon_K)= \beta_K(\zeta)$.
Whence
$\beta_K(i_*(\epsilon))=\beta_K(\zeta)$.
The map
$\beta_K: H^1_{\text{\'{e}t}}(\mathcal O,G^{sc}_{}) \to H^1_{\text{\'{e}t}}(K,G^{sc}_{})$
is injective since by the hypotheses of Theorem
\ref{MainThmGeometric}
such a map has trivial kernel for all
semi-simple simply-connected reductive $\mathcal O$-group schemes.
Whence
$i_*(\epsilon) = \zeta$
and
$\xi=i_*(\pi_*(\epsilon))=*$.
The semi-simple case of Theorem \ref{MainThmGeometric} is proved.
\end{proof}

\begin{clm}
\label{semisimpleinjective}
Under the hypotheses of Theorem \ref{MainThmGeometric} for all semi-simple reductive $\mathcal O$-group scheme $G$
the map
$H^1_{\text{\'et}}(\mathcal O,G) \to H^1_{\text{\'et}}(K,G)$
is injective.
\end{clm}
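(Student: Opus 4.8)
The plan is to deduce this injectivity from the triviality of the kernel for all inner twists of $G$. Those twists are again semi-simple $\mathcal O$-group schemes, so the semi-simple case of Theorem~\ref{MainThmGeometric} established just above applies to them verbatim. So suppose $\xi_1,\xi_2 \in H^1_{\text{\'et}}(\mathcal O,G)$ have the same image in $H^1_{\text{\'et}}(K,G)$; one must show $\xi_1 = \xi_2$.

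First I would choose a $G$-torsor $P$ over $\mathcal O$ representing $\xi_1$ and form the inner form $G' := {}^{P}G$, that is, $G$ twisted by $P$ through the conjugation action of $G$ on itself. Affineness and smoothness over $\mathcal O$ are preserved under such twisting, and for every geometric point $s$ of $\Spec(\mathcal O)$ one has $G'_{\Omega(s)} \cong G_{\Omega(s)}$ (a torsor over the algebraically closed field $\Omega(s)$ is trivial), which is connected and semi-simple; hence $G'$ is a semi-simple $\mathcal O$-group scheme in the sense of \cite[Exp.~XIX, Defn.~2.7]{D-G}.

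Next I would use the standard twisting bijection $\tau_P \colon H^1_{\text{\'et}}(\mathcal O,G') \xra{\ \sim\ } H^1_{\text{\'et}}(\mathcal O,G)$ carrying the distinguished point to $\xi_1$, together with its analogue $\tau_{P_K}$ over $K$; these are compatible with the restriction maps along $\mathcal O \hra K$, by functoriality of the twisting construction in the base ring. Set $\eta_2 := \tau_P^{-1}(\xi_2) \in H^1_{\text{\'et}}(\mathcal O,G')$. The hypothesis that $\xi_1$ and $\xi_2$ agree in $H^1_{\text{\'et}}(K,G)$ then says, after applying $\tau_{P_K}^{-1}$, that $\eta_2$ restricts to the distinguished point of $H^1_{\text{\'et}}(K,G')$; in other words $\eta_2 \in \ker[H^1_{\text{\'et}}(\mathcal O,G') \to H^1_{\text{\'et}}(K,G')]$.

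Finally I would apply the semi-simple case of Theorem~\ref{MainThmGeometric} to the semi-simple $\mathcal O$-group scheme $G'$: that kernel is trivial, so $\eta_2$ is the distinguished point and therefore $\xi_2 = \tau_P(\eta_2) = \xi_1$, which is the asserted injectivity. I do not expect a genuine obstacle here: this is the classical ``injectivity via triviality of the kernel for all forms'' device, and its only input beyond the already-proved semi-simple case is the elementary observation that inner twists of semi-simple group schemes remain semi-simple. The one point deserving a careful word is the compatibility of $\tau_P$ with base change from $\mathcal O$ to $K$, which is immediate from the naturality of the twisting bijection in the base.
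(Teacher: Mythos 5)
Your argument is correct and is essentially the paper's own: the paper forms the $G(\zeta)$-torsor $\underline{Iso}({}_{\xi}G,{}_{\zeta}G)$, which is exactly the class $\tau_P^{-1}(\xi_2)$ in your notation, notes it is trivial over $K$, and applies the already-proved semi-simple case to the inner form to conclude it is trivial over $\mathcal O$. The only difference is presentational (twisting bijection versus the Isom-scheme), so there is nothing further to add.
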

In fact, let $\xi, \zeta \in H^1_{\text{\'et}}(\mathcal O,G)$ be two elements such that its images $\xi_K, \zeta_K$ in
$H^1_{\text{\'et}}(K,G)$
are equal. Let $_{\xi}G$, $_{\zeta}G$ be the corresponding principal $G$-bundles over $\mathcal O$ and $G(\zeta)$ be
the inner form of the $\mathcal O$-group scheme $G$ corresponding to $\zeta$. The $\mathcal O$-scheme
$\underline {Iso}(_{\xi}G, _{\zeta}G)$
is a principal
$G(\zeta)$-bundle over $\mathcal O$,
which is trivial over $K$. Since $G(\zeta)$ is semi-simple reductive over $\mathcal O$,
the $\mathcal O$-scheme
$\underline {Iso}(_{\xi}G, _{\zeta}G)$
has an $\mathcal O$-point. Whence the Claim.

\begin{proof}[Proof of Theorem \ref{MainThmGeometric}]
Let $\mathcal O$ and $G$ be the same as in Theorem \ref{MainThmGeometric}.
Consider a short sequence of reductive $\mathcal O$-group schemes
\begin{equation}
\label{DerRedTori}
\{1\} \to G_{der} \xra{i} G \xra{\mu} C \to \{1\},
\end{equation}
where
$G_{der}$ is the derived $\mathcal O$-group scheme of $G$ and $C=corad(G)$ be a tori over $\mathcal O$ and
$\mu=f_0$ (see
\cite[Exp.XXII, Thm.6.2.1]{D-G}).
By that Theorem the morphism $\mu$ is smooth and its kernel is the reductive $\mathcal O$-group scheme $G_{der}$.
Moreover
$G_{der}$
is a semi-simple $\mathcal O$-group scheme.
By Claim \ref{semisimpleinjective} the map
\begin{equation}
\label{Gr_SerreForG_der}
H^1_{\text{\'et}}(\mathcal O,G_{der}) \to H^1_{\text{\'et}}(K,G_{der})
\end{equation}
is injective.
We need to prove that
\begin{equation}
\label{Gr_SerreForG}
ker[H^1_{\text{\'et}}(\mathcal O,G) \to H^1_{\text{\'et}}(K,G)]=* .
\end{equation}




The sequence
(\ref{DerRedTori})
of $\mathcal O$-group schemes gives a short exact sequence of the corresponding
sheaves in the \'{e}tale topology on the big \'{e}tale site. That sequence of sheaves
gives rise to a commutative diagram with exact arrows of pointed sets

\begin{equation}
\label{RactangelDiagram}
    \xymatrix{
\{1\} \ar[r]  & C(\mathcal O)/\mu(G(\mathcal O)) \ar[r]^{\partial}\ar[d]_{\alpha} & H^1_{\text{\'et}}(\mathcal O,G_{der}) \ar[r]^{i_*} \ar[d]^{\beta} &
H^1_{\text{\'et}}(\mathcal O,G) \ar[r]^{\mu} \ar[d]_{\gamma} & H^1_{\text{\'et}}(\mathcal O,C) \ar[d]^{\delta} &\\
\{1\}  \ar[r]  & C(\mathcal O_{\mathfrak p})/\mu(G(\mathcal O_{\mathfrak p})) \ar[r]^{\partial}\ar[d]_{\alpha_{\mathfrak p}} & H^1_{\text{\'et}}(\mathcal O_{\mathfrak p},G_{der}) \ar[r]^{i_*} \ar[d]^{\beta_{\mathfrak p}} &
H^1_{\text{\'et}}(\mathcal O_{\mathfrak p},G) \ar[r]^{\mu} \ar[d]_{\gamma_{\mathfrak p}} & H^1_{\text{\'et}}(\mathcal O_{\mathfrak p},C) \ar[d]^{\delta_{\mathfrak p}} &\\
\{1\}  \ar[r] & C(K)/\mu(G(K)) \ar[r]^{\partial} & H^1_{\text{\'et}}(K,G_{der}) \ar[r]^{i_*} &
H^1_{\text{\'et}}(K,G) \ar[r]^{\mu}  & H^1_{\text{\'et}}(K,C) &\\
}
\end{equation}
Here
$\mathfrak p \subset \mathcal O$ is a hight one prime ideal in $\mathcal O$.
Set
$\alpha_K=\alpha_{\mathfrak p} \circ \alpha$,
$\beta_K= \beta_{\mathfrak p} \circ \beta$,
$\gamma_K = \gamma_{\mathfrak p} \circ \gamma$,
$\delta_K = \delta_{\mathfrak p} \circ \delta$.
By a theorem of Nisnevich
\cite{Ni}
one has
\begin{equation}
\label{NisnevichForPurity}
ker(\alpha_{\mathfrak p})=ker(\beta_{\mathfrak p})=ker(\gamma_{\mathfrak p})=*
\end{equation}
Let
$\xi \in ker(\gamma_K)$,
then
$\mu(\xi) \in ker(\delta_K)$.
By
\cite{C-T-S}
one has
$ker(\delta_K)=*$,
whence
$\mu(\xi)=*$
and
$\xi=i_*(\zeta)$
for an
$\zeta \in H^1_{\text{\'et}}(\mathcal O,G_{der})$.
Since
$\gamma_K(\xi)=*$
and
$ker(\gamma_{\mathfrak p})=*$
we see that
$\gamma(\xi)=*$.
Whence
$i_*(\beta(\zeta))=*$
and
$\beta(\zeta)= \partial (\epsilon_{\mathfrak p})$
for an
$\epsilon_{\mathfrak p} \in C(\mathcal O_{\mathfrak p})/\mu(G(\mathcal O_{\mathfrak p}))$.
A diagram chaise
AND Lemma \ref{BoundaryInj}
show that there exists a unique element
$\epsilon_K \in C(K)/\mu(G(K))$
such that
for each hight one prime ideal
$\mathfrak p$ of $\mathcal O$
one has
$\alpha_{\mathfrak p}(\epsilon_{\mathfrak p})= \epsilon_K \in C(K)/\mu(G(K))$.
By Purity Theorem (Theorem \ref{TheoremA})
there exists an element
$\epsilon \in C(\mathcal O)/\mu(G(\mathcal O))$
such that
$\alpha_K(\epsilon)=\epsilon_K$.
The element
$\epsilon_K$
has the property that
$\partial (\epsilon_K)= \beta_K(\zeta)$.
Whence
$\beta_K(\partial (\epsilon))=\beta_K(\zeta)$.
The map $\beta_K$ is injective as indicated in the beginning
of the proof.
Whence
$\partial (\epsilon) = \zeta$
and
$\xi=i_*(\partial(\epsilon))=*$.
The proof of Theorem
\ref{MainThmGeometric} is completed.

\end{proof}

\section{Examples}
\label{ExamplesOfPurityResults}
We follow here the notation of The Book of Involutions
\cite{KMRT}.
The field $k$ is of characteristic different from $2$, $3$ and $5$.
We added the latter
requirement to be sure that all the algebraic groups in this section are reductive.
The functors
(\ref{Example1})
to
(\ref{Example12})
satisfy purity for regular local rings containing $k$
as follows either from Theorem
\ref{PurityForSubgroup}
or from
Theorem (A).

\begin{itemize}
\item[(1)]
Let $G$ be a simple algebraic group over the field $k$,
$Z$ a central subgroup, $G'=G/Z$, $\pi: G \to G'$
 the canonical morphism.
For any $k$-algebra $A$ let
$\delta_{\pi,R}: G'(R) \to \textrm{H}^1_{\text{\'et}}(R,Z)$
be the boundary operator. One has a functor
\begin{equation}
\label{Example1}
R \mapsto \textrm{H}^1_{\text{\'et}}(R,Z)/ \textrm{Im}(\delta_{\pi,R}).
\end{equation}

\item[(2)]
Let $(A,\sigma)$ be a finite separable $k$-algebra with an orthogonal involution.
Let
$\pi: \textrm{Spin}(A, \sigma) \to \textrm{PGO}^+(A, \sigma)$
be the canonical morphism of the spinor $k$-group scheme to
the projective orthogonal $k$-group scheme. Let
$Z=ker (\pi)$.
For a $k$-algebra $R$ let
$\delta_R: \textrm{PGO}^+(A, \sigma)(R) \to \textrm{H}^1_{\text{\'et}}(R,Z)$
be the boundary operator.
One has a functor
\begin{equation}
\label{Example2}
R \mapsto \textrm{H}^1_{\text{\'et}}(R,Z)/Im(\delta_R).
\end{equation}
In $(2a)$ and $(2b)$ below we describe this functor somewhat more explicitly
following
\cite{KMRT}.

\item[(2a)]
Let $C(A, \sigma)$ be the Clifford algebra. Its center $l$ is an \'etale
quadratic $k$-algebra.
Assume that $deg(A)$ is divisible by $4$.
Let
$\Omega(A,\sigma)$ be the extended Clifford group
\cite[Definition given just below (13.19)]{KMRT}.
Let
$\underline \sigma$
be the canonical involution of
$C(A,\sigma)$
as it is described in
\cite[just above (8.11)]{KMRT}.
Then
$\underline \sigma$
is either orthogonal or symplectic by
\cite[Prop.8.12]{KMRT}.
Let
$
\underline \mu: \Omega(A,\sigma) \to R_{l/k}(\mathbb G_{m,l})
$
be the multiplier map defined in
\cite[just above (13.25)]{KMRT}
by
$\underline \mu (\omega)= \underline \sigma(\omega)\cdot \omega$.
Set
$R_l=R \otimes_k l$.
For a field or a local ring $R$ one has
$
\textrm{H}^1_{\text{\'et}}(R,Z)/Im(\delta_R)=R_l^{\times}/\underline \mu (\Omega(A,\sigma)(R))
$
by
\cite[the diagram in (13.32)]{KMRT}.
Consider the functor
\begin{equation}
\label{Example2a}
R \mapsto R_l^{\times}/\underline \mu (\Omega(A,\sigma)(R)).
\end{equation}
It coincides with the functor
$R \mapsto \textrm{H}^1_{\text{\'et}}(R,Z)/Im(\delta_R)$
on local rings containing $k$.

\item[(2b)]
Now let $deg(A)=2m$ with odd $m$. Let $\tau: l \to l$ be the involution of $l/k$.
The kernel of the morphism
$R_{l/k}(\mathbb G_{m,l}) \xra{id - \tau} R_{l/k}(\mathbb G_{m,l})$
coincides with $\mathbb G_{m,k}$. Thus $id-\tau$ 
induces a $k$-group scheme morphism
which we denote
$\overline {id-\tau}: R_{l/k}(\mathbb G_{m,l})/\mathbb G_{m,k} \hra R_{l/k}(\mathbb G_{m,l})$.
Let
$
\underline \mu: \Omega(A,\sigma) \to R_{l/k}(\mathbb G_{m,l})
$
be the multiplier map defined in
\cite[just above (13.25)]{KMRT}
by
$\underline \mu (\omega)= \underline \sigma(\omega)\cdot \omega$.
Let
$\kappa: \Omega(A,\sigma) \to R_{l/k}(\mathbb G_{m,l})/\mathbb G_{m,k}$
be the $k$-group scheme morphism described in
\cite[Prop.13.21]{KMRT}.
The composition
$\overline {(id-\tau)} \circ \kappa$
lands in
$R_{l/k}(\mathbb G_{m,l})$.
Let
$U \subset \mathbb G_{m,k} \times R_{l/k}(\mathbb G_{m,l})$
be a closed $k$-subgroup consisting of all $(\alpha, z)$ such that
$\alpha^4 = N_{l/k}(z)$.

Set
$\mu_*=(\underline \mu, [\overline {(id-\tau)} \circ \kappa]\cdot\underline \mu2): \Omega(A,\sigma)
\to \mathbb G_{m,k} \times R_{l/k}(\mathbb G_{m,l})$.
This $k$-group scheme morphism lands in $\textrm{U}$.
So we get a $k$-group scheme morphism
$\mu_*: \Omega(A,\sigma) \to \textrm{U}$.
On the level of $k$-rational points it coincides with the one described in
\cite[just above (13.35)]{KMRT}. For a field or a local ring one has
$$
\textrm{H}^1_{\text{\'et}}(R,Z)/Im(\delta_R)=
\textrm{U}(R)/[\{(N_{l/k}(\alpha), \alpha^4) | \alpha \in R_l^{\times}\}\cdot \mu_*(\Omega(A,\sigma)(R))].
$$
Consider the the functor
\begin{equation}
\label{Example2b}
R \mapsto \textrm{U}(R)/[\{(N_{l/k}(\alpha), \alpha^4) | \alpha \in R_l^{\times}\}\cdot \mu_*(\Omega(A,\sigma)(R))].
\end{equation}
It coincides with the functor
$R \mapsto \textrm{H}^1_{\text{\'et}}(R,Z)/Im(\delta_R)$
on local rings containing $k$.

\item[(3)]
Let
$\Gamma(A,\sigma)$
be the Clifford group $k$-scheme of $(A,\sigma)$.
Let
$\textrm{Sn}: \Gamma(A,\sigma) \to \mathbb G_{m,k}$
be the spinor norm map. It is dominant.
Consider the functor
\begin{equation}
\label{Example3}
R \mapsto R^{\times}/\textrm{Sn}(\Gamma(A,\sigma)(R)).
\end{equation}
Purity for this functor was originally  proved in
\cite[Thm.3.1]{Z}.
In fact,
$\Gamma(A,\sigma)$
is $k$-rational.

\item[(4)]
We follow here the Book of Involutions
\cite[\S 23]{KMRT}.
Let $A$ be a separable finite dimensional $k$-algebra with center $l$
and $k$-involution $\sigma$ such that $k$ coincides with all $\sigma$-invariant
elements of $l$, that is $k=l^{\sigma}$.
Consider the $k$-group schemes of
similitudes
of $(A,\sigma)$:
$$
\textrm{Sim}(A,\sigma)(R)=\{a \in A_R^{\times}\ | a\cdot \sigma_R (a) \in l_K^{\times} \}.
$$
We have a $k$-group scheme morphism
$
\mu: \textrm{Sim}(A,\sigma) \to \mathbb G_{m,k}, \ a \mapsto a \cdot \sigma (a).
$
It gives an exact sequence of algebraic $k$-groups
$$
\{1\} \to \textrm{Iso}(A,\sigma) \to \textrm{Sim}(A,\sigma) \to \mathbb G_{m,k} \to \{1\}.
$$
One has a the functor
\begin{equation}
\label{Example4}
R \mapsto R^{\times}/\mu (\textrm{Sim}(A,\sigma)(R)).
\end{equation}
Purity for this functor was originally  proved in
\cite[Thm.1.2]{Pa}.
Various particular cases are obtained
considering unitary, symplectic and orthogonal involutions.
\item[(4a)]
In the case of an orthogonal involution $\sigma$ the connected component
$\textrm{GO}^+(A, \sigma)$
\cite[(12.24)]{KMRT}
of the similitude $k$-group scheme
$\textrm{GO}(A, \sigma):=\textrm{Sim}(A,\sigma)$
has the index two in
$\textrm{GO}(A, \sigma)$.
The restriction of $\mu$ to
$\textrm{GO}^+(A, \sigma)$
is still a dominant morphism to
$\mathbb G_{m,k}$. One has a functor
\begin{equation}
\label{Example4a}
R \mapsto R^{\times}/\mu (\textrm{GO}^+(A,\sigma)(R))
\end{equation}
It seems that its purity
does not follow from
\cite[Thm.1.2]{Pa}. In fact we do not know
whether the norm principle holds for
$\mu: \textrm{GO}^+(A, \sigma) \to \mathbb G_{m,k}$
or not.

\item[(5)]
Let $A$ be a central simple algebra (csa) over $k$ and
$\textrm{Nrd}:GL_{1,A} \to \mathbb G_{m,k}$
 the reduced norm morphism. One has a functor
\begin{equation}
\label{Example5}
R \mapsto R^{\times}/\textrm{Nrd}(\textrm{GL}_{1,A}(R)).
\end{equation}
Purity for this functor was originally  proved in
\cite[Thm.5.2]{C-TO}.
\item[(6)]
Let $(A,\sigma)$ be a finite separable $k$-algebra with
a unitary involution such that its center $l$ is a quadratic extension of $k$.
Let $U(A,\sigma)$ be the unitary $k$-group scheme. Let
$U_l(1)$ be an algebraic tori given by $N_{l/k}=1$.
One has a functor
\begin{equation}
\label{Example6}
R \mapsto \textrm{U}_l(1)(R)/\textrm{Nrd}(\textrm{U}_{A,\sigma}(R)) =
\{\alpha \in R^{\times}_l | N_{l/k}(\alpha)=1 \} /\textrm{Nrd}(\textrm{U}_{A,\sigma}(R))
\end{equation}
where
$\textrm{Nrd}$
is the reduced norm map.
Purity for this functor was originally  proved in
\cite[Thm.3.3]{Z}.

\item[(7)]
Let $A$ be a csa of degree $3$ over $k$, $\textrm{Nrd}$  the reduced norm
and $\textrm{Trd}$ be the reduced trace. Consider the cubic form
on the $27$-dimensional $k$-vector space
$A \times A \times A$
given by
$N:= \textrm{Nrd}(x)+\textrm{Nrd}(y)+\textrm{Nrd}(z)-\textrm{Trd}(xyz)$.
Let
$\textrm{Iso}(A,N)$
be the $k$-group scheme of isometries of $N$ and
$\textrm{Sim}(N)$
be the $k$-group scheme of similitudes of $N$.
It is known that
$\textrm{Iso}(N)$ is a normal algebraic subgroup in $\textrm{Sim}(A,N)$
and the factor group coincides with
$\mathbb G_{m,k}$. So we have a canonical $k$-group morphism
(the multiplier)
$\mu: \textrm{Sim}(N) \to \mathbb G_{m,k}$.
Now one has a functor
\begin{equation}
\label{Example11}
R \mapsto R^{\times}/\mu(\textrm{Sim}(N)(R)).
\end{equation}
Note that the connected component of $\textrm{Iso}(N)$ is
a simply connected algebraic $k$-group of the type $E_6$.

\item[(8)]
Let $(A,\sigma)$ be a csa of degree $8$ over $k$
with a symplectic involution. Let $V \subset A$ be the subspace
of all skew-symmetric elements. It is of dimension $28$.
Let
$\textrm{Pfd}$
be the reduced Pfaffian on $V$ and
$\textrm{Trd}$
be the reduced trace on $A$. Consider the degree $4$ form on
the space
$V \times V$
given by
$F:=\textrm{Pfr}(x)+\textrm{Pfr}(y)-1/4\textrm{Trd}((xy)2)-1/16{\textrm{Trd}(xy)}^2$.
Consider the symplectic form on
$V \times V$
given by
$\phi((x_1,y_1),(x_2,y_2))= \textrm{Trd}(x_1y_2-x_2y_1)$.
Let
$\textrm{Iso}(F)$
(resp. $\textrm{Iso}(\phi)$)
be the $k$-group scheme of isometries of the pair $F$
(resp. of $\phi)$).
Let
$\textrm{Sim}(F)$
(resp. $\textrm{Sim}(\phi)$)
be the $k$-group scheme of similitudes of $F$
(resp. of $\phi$).
Set
$G=\textrm{Iso}(F) \cap \textrm{Iso}(\phi)$
and
$G^+=\textrm{Sim}(F) \cap \textrm{Sim}(\phi)$.
It is known that
$G$ is a normal algebraic subgroup in $G^+$
and the factor group coincides with
$\mathbb G_{m,k}$. So we have a canonical $k$-group morphism 
$\mu: G^+ \to \mathbb G_{m,k}$.
Now one has a functor
\begin{equation}
\label{Example12}
R \mapsto R^{\times}/\mu(G^+(R)).
\end{equation}
Note that $G$ is a simply-connected group of the type $E_7$.

\end{itemize}

\section{Appendix}
Theorem \ref{PropEquatingGroups} is proved in the present Section.
\begin{thm}
\label{PropEquatingGroups2}
Let $S$ be a regular semi-local
irreducible scheme.
Assume that all the closed points of $S$ have {\bf finite residue fields}.
Assume
that $G_1$ and $G_2$ are reductive $S$-group schemes
which are forms of each other.
Let $T \subset S$ be a connected non-empty closed
sub-scheme of $S$, and
$\varphi: G_1|_T \to G_2|_T$
be $S$-group scheme isomorphism.
Then there exists a finite \'{e}tale morphism
$\tilde S \xra{\pi} S$ together with its section $\delta: T \to
\tilde S$ over $T$ and $\tilde S$-group scheme isomorphisms
$\Phi: \pi^*{G_1} \to \pi^*{G_2}$
such that
\begin{itemize}
\item[(i)]
$\delta^*(\Phi)=\varphi$,
\item[(ii)]
the scheme $\tilde S$ is irreducible.
\end{itemize}
\end{thm}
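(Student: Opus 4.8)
The plan is to follow the template of the proof of \cite[Prop.~5.1]{PSV} (itself modelled on \cite[Prop.~7.1]{OP}): realize the equating datum as a section of an auxiliary smooth affine $S$-scheme and produce a finite \'etale multisection through it, replacing at the one place where \cite{PSV} uses infiniteness of the base field a substitute valid over finite residue fields. Concretely, let $\mathcal A=\underline{\Aut}(G_1)$ be the $S$-group scheme of automorphisms of $G_1$; by \cite[Exp.~XXIV]{D-G} it is smooth affine over $S$, an extension of a finite \'etale $S$-group scheme by the adjoint group $G_1^{\mathrm{ad}}$, and in particular of pure relative dimension $r:=\dim_S G_1^{\mathrm{ad}}$ over $S$. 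Since $G_1$ and $G_2$ are forms of each other, the $S$-scheme $Y$ of $S$-group-scheme isomorphisms $G_1\to G_2$ is a torsor under $\mathcal A$; a torsor under a smooth affine group scheme over the affine scheme $S$ is represented by a smooth affine $S$-scheme, faithfully flat of pure relative dimension $r$ over $S$. The isomorphism $\varphi$ is an element of $Y(T)$, i.e.\ a section $\sigma\colon T\to Y$ of $Y\to S$ over the closed subscheme $T$; as $Y\to S$ is separated, $\sigma$ is a closed immersion, so $\sigma(T)$ is a connected closed subscheme of $Y$, finite over $S$. Conversely, to prove the theorem it suffices to produce a finite \'etale $S$-scheme $\pi\colon\tilde S\to S$, a closed $S$-subscheme $\tilde S\hookrightarrow Y$ (this is $\Phi$, regarded as the induced isomorphism $\pi^{*}G_1\xrightarrow{\sim}\pi^{*}G_2$), and a section $\delta\colon T\to\tilde S$ over $T$ with $\delta^{*}\Phi=\varphi$. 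So the whole problem is the geometric one of finding an irreducible finite \'etale multisection of $Y\to S$ through $\sigma(T)$. (Optionally one may first replace $S$ by the connected component of the finite \'etale $S$-scheme $Y/G_1^{\mathrm{ad}}$ containing the image of $\sigma$ --- this is again finite \'etale and, being connected and \'etale over the normal irreducible $S$, irreducible, and it carries a section over $T$ --- after which $Y$ is a torsor under the connected group $G_1^{\mathrm{ad}}$; this reduction is inessential.)

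For the geometric core, fix a closed $S$-immersion $Y\hookrightarrow\Aff^{n}_{S}$. The scheme $\tilde S$ will be cut out of $Y$ by $r$ hypersurface sections chosen to pass through $\sigma(T)$ and to be transverse to $Y$ both along $\sigma(T)$ and along each closed fibre $Y_{u}$, $u$ a closed point of $S$; arranging these sections so as to realize a relative Noether normalization of an affine neighbourhood of $\sigma(T)\cup\bigcup_{u}Y_{u}$ in $Y$ (in the spirit of the finite surjective projections onto affine space in the nice-triples construction), the resulting closed subscheme $\tilde S\subseteq Y$ is finite over $S$ and \'etale over $S$ at every point lying over a closed point of $S$ and at every point of $\sigma(T)$. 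Over an infinite residue field these transversality and Noether-normalization requirements hold for a generic linear choice of the sections, by a Bertini argument, and this is exactly where \cite[Prop.~5.1]{PSV} invokes that $k$ is infinite. When the residue fields of $S$ are finite the naive choice may fail, and the substitute --- \emph{the main technical obstacle}, and the source of the ``additional technicalities'' relative to \cite{PSV} --- is to allow hypersurface sections of higher, bounded, degree and to invoke a moving lemma over finite fields (as developed for the geometric presentation theorems of \cite{Pan1}) so as to find, simultaneously, sections through $\sigma(T)$ realizing the required transversality along $\sigma(T)$ and along each $Y_{u}$; raising the degree guarantees that the finitely many finite fields $k(u)$ have enough points of the relevant degree for such a choice to exist.

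It remains to clean up. The non-\'etale locus of the finite morphism $\tilde S\to S$ is closed, and by construction it contains no point lying over a closed point of $S$; since $S$ is semi-local, its image in $S$ is a closed subset missing every maximal ideal, hence empty, so $\tilde S\to S$ is finite \'etale. Replacing $\tilde S$ by the connected component $\tilde S_{0}$ containing $\sigma(T)$ --- a direct summand, as $\tilde S$ is noetherian with finitely many connected components --- keeps it finite \'etale; its image in $S$ is open and closed, hence all of the connected scheme $S$, so $\tilde S_{0}\to S$ is surjective; and being connected and \'etale over the regular (hence normal) irreducible scheme $S$, $\tilde S_{0}$ is irreducible. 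Taking $\pi:=(\tilde S_{0}\to S)$, $\delta:=(\sigma\colon T\to\tilde S_{0})$ (a section over $T$ since $\sigma$ is) and $\Phi:=(\tilde S_{0}\hookrightarrow Y)$, i.e.\ the induced $\tilde S_{0}$-group scheme isomorphism $\pi^{*}G_1\xrightarrow{\sim}\pi^{*}G_2$, yields all the data required, with $\delta^{*}\Phi=\varphi$ by construction and $\tilde S_{0}$ irreducible; if the optional reduction of the first paragraph was used, the final $\pi$ is the composite, still finite \'etale. The hard part, as indicated, is the finite-field moving argument in the middle step; a minor preliminary point is to quote from \cite{D-G} the smoothness and affineness of $\underline{\Aut}(G_1)$ and the representability of the $\mathcal A$-torsor $Y$.
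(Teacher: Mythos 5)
Your route is genuinely different from the paper's, but as written it has two real gaps. The paper does \emph{not} run a Bertini-type multisection argument on the full isomorphism scheme of the reductive groups; it reduces to two cases it can quote or handle cheaply --- the semi-simple case (Proposition \ref{PropEquatingGroups3}, i.e.\ \cite[Prop.5.1]{Pan1}, where the finite-field geometry is actually carried out) and the multiplicative-type case (Proposition \ref{PropEquatingGroups4}) --- by means of the central isogeny $Rad(G_r)\times_S der(G_r)\to G_r$ with kernel $Z_r$, and then glues the three resulting isomorphisms $\Phi_{Rad},\Phi_{der},\Phi_Z$ into one $\Phi$ on $G_{1,\tilde S}$ using the rigidity statement for morphisms of multiplicative-type group schemes (Proposition \ref{PropEquatingGroups5}) and faithful flatness of $\Pi_1|_T$ to check $\delta^*(\Phi)=\varphi$. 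Your plan instead attacks $Y=\underline{Iso}_S(G_1,G_2)$ directly.

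The first gap is structural: the claim that $\underline{Aut}(G_1)$ is smooth \emph{affine} over $S$ and an extension of a finite \'etale group scheme by $G_1^{ad}$ is the statement of \cite[Exp.~XXIV]{D-G} for \emph{semi-simple} group schemes. For a reductive $G_1$ with nontrivial radical (already for $G_1=\mathbb G_{m,S}^2$, where $\underline{Aut}(G_1)$ is the constant group scheme $GL_2(\mathbb Z)$) the outer automorphism functor is an infinite twisted-constant group scheme, so $\underline{Aut}(G_1)$ is not quasi-compact, $Y$ is not an affine $S$-scheme of finite type of pure relative dimension $r$, and $Y/G_1^{ad}$ is not finite \'etale over $S$; your ``optional'' reduction to the connected component of $Y/G_1^{ad}$ through $\sigma(T)$ is exactly the nontrivial step needed to repair this, and it requires an argument (that this component is finite \'etale over $S$) which you do not give. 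The second gap is that the entire hard content --- the finite-field substitute for the generic-linear-section/Bertini argument of \cite[Prop.5.1]{PSV} --- is asserted rather than proved: you defer it to the methods of \cite{Pan1}, but what \cite{Pan1} actually provides is precisely Proposition \ref{PropEquatingGroups3}, the statement for the isomorphism scheme of \emph{semi-simple} group schemes. If one is going to use that result as a black box (as one must, absent a new moving argument), the honest way to get the reductive case out of it is the paper's decomposition through $Rad\times der$, which in addition handles the torus part without any Bertini input at all; your outline, by contrast, leaves the key step unproved in exactly the generality where it is not available.
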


\begin{prop}
\label{PropEquatingGroups3}
Theorem \ref{PropEquatingGroups2} holds in the case when the groups $G_1$ and $G_2$ are semi-simple
(see \cite[Prop.5.1]{Pan1}).
\end{prop}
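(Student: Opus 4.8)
The plan is to recast the statement as a problem of lifting a section of a smooth affine $S$-scheme to a finite \'{e}tale multisection, and then to carry out the construction of \cite[Prop.~5.1]{Pan1}.

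First I would introduce the $S$-scheme $Y:=\underline{\mathrm{Iso}}(G_1,G_2)$ whose $R$-points, for an $S$-scheme $R$, are the isomorphisms of $R$-group schemes $G_1\times_S R\to G_2\times_S R$. Since $G_1$ and $G_2$ are semi-simple and forms of one another, $Y$ is an \'{e}tale-locally trivial bitorsor under the smooth affine $S$-group schemes $\underline{\mathrm{Aut}}(G_1)$ and $\underline{\mathrm{Aut}}(G_2)$, hence $Y$ is itself a smooth affine $S$-scheme. The given isomorphism $\varphi\colon G_1|_T\to G_2|_T$ is exactly an $S$-morphism $s_\varphi\colon T\to Y$ lying over the inclusion $T\hookrightarrow S$. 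Unwinding the definitions, producing a triple $(\pi\colon\tilde S\to S,\ \delta\colon T\to\tilde S,\ \Phi\colon\pi^*G_1\to\pi^*G_2)$ as in Theorem \ref{PropEquatingGroups2} is the same as producing a closed $S$-subscheme $\tilde S\hookrightarrow Y$ that is finite \'{e}tale over $S$, is irreducible, and through which $s_\varphi$ factors: then $\pi$ is the structure morphism of $\tilde S$, $\delta$ the corestriction of $s_\varphi$, and $\Phi$ the tautological point of $Y(\tilde S)$, and the compatibilities $\delta^*\Phi=\varphi$ and $\pi\circ\delta=(T\hookrightarrow S)$ hold by construction. So everything reduces to finding a finite \'{e}tale multisection of $Y\to S$ passing through $s_\varphi(T)$.

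For this I would use the structure of $\underline{\mathrm{Aut}}(G_i)$ for semi-simple $G_i$, namely $\underline{\mathrm{Aut}}^0(G_i)=G_i^{\mathrm{ad}}$ with $\underline{\mathrm{Aut}}(G_i)/\underline{\mathrm{Aut}}^0(G_i)$ finite \'{e}tale. Since $T$ is connected, $s_\varphi(T)$ lies in a single connected component $Y_0$ of $Y$; replacing $Y$ by $Y_0$ (a torsor under a form of $G_i^{\mathrm{ad}}$) I may assume $Y\to S$ is smooth with geometrically integral fibres of dimension $d:=\dim G_i^{\mathrm{ad}}$, and, $S$ being irreducible, $Y$ is irreducible. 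Next I would shrink $Y$ to a principal affine open still containing $s_\varphi(T)$, embed it in some $\Aff^N_S$, and choose a finite surjective $S$-morphism $p\colon Y\to\Aff^d_S$ (a relative Noether normalization, obtained from a linear projection that is finite on a projective closure of $Y$ and carries the boundary to the hyperplane at infinity). Then $p^{-1}\bigl(p(s_\varphi(T))\bigr)$ is a closed $S$-subscheme, finite over $S$ and containing $s_\varphi(T)$; after a further localization of $S$ and replacement by the connected component meeting $s_\varphi(T)$ (irreducible because $S$ is normal irreducible and the cover is connected), it becomes finite \'{e}tale and is the desired $\tilde S$. This is exactly the construction carried out in \cite[Prop.~5.1]{Pan1}, to which I refer for the remaining details.

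The hard part will be the hypothesis that the residue fields at the closed points of $S$ are finite: the relative Noether normalization, and especially the requirement that $p$ be \'{e}tale along the relevant points of $Y_0$, cannot be arranged by the generic-position and Bertini arguments available over infinite fields (as used in \cite[Prop.~5.1]{PSV}). Over finite fields one must instead appeal to Poonen--Bertini type statements, or first enlarge the base by an affine line, or cut $Y_0$ down only after passing to the finite \'{e}tale cover; this is precisely where the ``additional technicalities'' behind Theorem \ref{PropEquatingGroups} live, and the reason \cite[Prop.~5.1]{Pan1} is invoked rather than \cite{PSV} directly.
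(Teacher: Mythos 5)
The paper offers no proof of this proposition: it is stated with a bare citation of \cite[Prop.5.1]{Pan1}. So the only thing to compare is your reduction strategy, and that part does match the standard approach behind the reference: pass to the smooth affine $S$-scheme $Y=\underline{\mathrm{Iso}}(G_1,G_2)$ (a bitorsor under the smooth affine automorphism group schemes of the semi-simple $G_i$), view $\varphi$ as a section $s_\varphi\colon T\to Y$ over $T\hookrightarrow S$, and observe that a triple $(\pi,\delta,\Phi)$ as in Theorem \ref{PropEquatingGroups2} is the same thing as an irreducible closed subscheme $\tilde S\subset Y$, finite and \'etale over $S$, through which $s_\varphi$ factors. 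Your identification of the finite-residue-field issue as the reason \cite{Pan1} rather than \cite{PSV} is invoked is also on target.

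However, your concrete construction of $\tilde S$ has a genuine gap. With $p\colon Y\to\Aff^d_S$ finite surjective, the subscheme $p(s_\varphi(T))$ lies inside $\Aff^d_T=\Aff^d_S\times_S T$, because $p\circ s_\varphi$ is a morphism over the inclusion $T\hookrightarrow S$. Hence $p^{-1}\bigl(p(s_\varphi(T))\bigr)$ is contained in $Y\times_S T$ and is supported entirely over $T$; it is indeed finite over $S$ (through the closed immersion $T\hookrightarrow S$), but it is not flat over $S$ unless $T=S$, so no localization of $S$ and no passage to a connected component can make it finite \'etale over $S$. What is actually needed is the preimage of a \emph{full} section of $\Aff^d_S\to S$ extending $p\circ s_\varphi$: one must first arrange, e.g.\ by choosing the $d$ coordinate functions of $p$ inside the ideal of $s_\varphi(T)$, that $p(s_\varphi(T))\subset\{0\}\times S$, and then set $\tilde S=p^{-1}(\{0\}\times S)$, which is finite over all of $S$ and contains $s_\varphi(T)$. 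Even then one must still ensure that $p$ is \'etale at every closed point of $s_\varphi(T)$ (independence of the differentials of the chosen functions along the fibres at finitely many closed points with finite residue fields, which a naive general-position argument does not provide), discard the non-\'etale locus without losing finiteness, and secure irreducibility of the result; these steps are precisely the content of \cite[Prop.5.1]{Pan1}, which you, like the paper, ultimately leave as a black box.
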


\begin{prop}
\label{PropEquatingGroups4}
Theorem \ref{PropEquatingGroups2} holds in the case when the groups $G_1$ and $G_2$ are tori
and, more generally, in the case when the groups $G_1$ and $G_2$ are
of multiplicative type.
\end{prop}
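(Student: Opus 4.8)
The plan is to use the classification of groups of multiplicative type by their character groups. Recall (see \cite{D-G}, Exp. VIII--XI) that over a connected regular noetherian scheme $S$ every $S$-group scheme of multiplicative type of finite type is isotrivial, i.e. split by a finite étale cover, and that the formation of the character group is an anti-equivalence between such group schemes and finitely generated abelian groups carrying a continuous $\pi_1(S)$-action; moreover $\underline{\Aut}(G_1)$ is representable by an $S$-group scheme which is étale over $S$ (over a finite étale cover splitting $G_1$ it becomes the constant group scheme $\Aut(M)$, $M$ the character group). Consequently the sheaf $\underline{\mathrm{Isom}}_{S\text{-}\mathrm{gp}}(G_1,G_2)$ is representable by an $S$-scheme $Y$ which is étale over $S$, and since $G_1$ and $G_2$ are forms of each other (and $\underline{\Aut}(G_1)$ is smooth, so fppf and étale forms agree) $Y$ is a torsor under $\underline{\Aut}(G_1)$; in particular $Y\to S$ is surjective. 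On $Y$ there is a tautological isomorphism $\Phi_{\mathrm{taut}}\colon \mathrm{pr}^{*}G_1\xra{\ \sim\ }\mathrm{pr}^{*}G_2$, with the property that for any $S$-scheme $V$ and any $v\colon V\to Y$ the pullback $v^{*}\Phi_{\mathrm{taut}}$ is the isomorphism $G_1|_V\to G_2|_V$ represented by $v$.

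The first step is to describe the connected components of $Y$. Fix a connected finite Galois étale cover $p\colon S'\to S$ with group $\Gamma$ splitting both $G_1$ and $G_2$, so that $G_i|_{S'}\cong D(M)$ for one and the same finitely generated abelian group $M$ (the two character groups being abstractly isomorphic because $G_1,G_2$ are forms). Over $S'$ the scheme $Y$ is the constant scheme attached to the set $\mathrm{Isom}_{\mathbb Z}(M,M)$, a torsor under the finitely generated (in general infinite) group $\Aut(M)$, and descent along $p$ exhibits $Y$ as the quotient of $S'\times\mathrm{Isom}_{\mathbb Z}(M,M)$ by the diagonal action of the \emph{finite} group $\Gamma$ (Galois on the first factor, twisted conjugation on the index set). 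Since $\Gamma$ is finite, $\mathrm{Isom}_{\mathbb Z}(M,M)$ breaks up into finite $\Gamma$-orbits, and $Y$ is accordingly a disjoint union of connected components, each of which is the finite étale $S$-scheme $S'/H$ attached to the stabilizer $H\le\Gamma$ of a point of the corresponding orbit. Thus, although $Y\to S$ has infinite fibres, every connected component of $Y$ is finite étale over $S$ and, being nonempty, open and closed over the connected scheme $S$, also surjective over $S$; moreover, being étale over the regular scheme $S$, each component is regular, hence (being connected) irreducible.

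Now I would produce $\tilde S$. The given isomorphism $\varphi$ is an element of $Y(T)$, i.e. a section of $Y\times_S T\to T$; as $T$ is connected, its image $\varphi(T)$ is a connected subset of $Y$, hence lies in a unique connected component $Y^{\circ}$ of $Y$. Set $\tilde S:=Y^{\circ}$ with $\pi\colon\tilde S\to S$; by the previous paragraph $\pi$ is finite étale, and $\tilde S$ is irreducible, which is condition (ii). Let $\Phi:=\Phi_{\mathrm{taut}}|_{\tilde S}\colon \pi^{*}G_1\to\pi^{*}G_2$. Since $\varphi(T)\subseteq Y^{\circ}$, the section $\varphi$ factors through the open and closed subscheme $Y^{\circ}\times_S T\subseteq Y\times_S T$, giving a section $\delta\colon T\to\tilde S$ of $\pi$ over $T$ with $\pi\circ\delta$ equal to the inclusion $T\hra S$; and, by the defining property of $\Phi_{\mathrm{taut}}$, the isomorphism $\delta^{*}\Phi$ is precisely the one represented by the $T$-point $\delta=\varphi$ of $Y$, i.e. $\delta^{*}\Phi=\varphi$, which is condition (i). This proves Proposition \ref{PropEquatingGroups4}; in particular the hypothesis on finite residue fields in Theorem \ref{PropEquatingGroups2} is not needed in this case, only that $S$ is regular, connected and noetherian.

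The step I expect to be the main obstacle is the structural claim that the connected components of $Y=\underline{\mathrm{Isom}}(G_1,G_2)$ are finite étale over $S$: for a general surjective étale $S$-scheme a connected component need not be finite over $S$ (it may fail to dominate $S$), and it is exactly the isotriviality of groups of multiplicative type --- equivalently, the fact that the relevant Galois action on $\mathrm{Isom}_{\mathbb Z}(M,M)$ factors through the finite group $\Gamma$ --- that excludes this pathology here. The remaining ingredients (representability and étaleness of $\underline{\Aut}$ and $\underline{\mathrm{Isom}}$, existence of a splitting Galois cover) are standard from \cite{D-G}, and the passage from tori to arbitrary groups of multiplicative type is automatic because the argument only uses that the character group is a finitely generated abelian group. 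This Proposition, together with the semisimple case (Proposition \ref{PropEquatingGroups3}) and the structure of $\underline{\Aut}$ of a reductive group scheme, is what feeds into the proof of Theorem \ref{PropEquatingGroups2}.
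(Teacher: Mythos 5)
Your proof is correct. The paper states Proposition \ref{PropEquatingGroups4} with no argument at all, so there is no in-text proof to compare against; what you give is the natural argument and it does fill that gap. The essential point --- which you isolate correctly --- is that for groups of multiplicative type the scheme $Y=\underline{\mathrm{Isom}}_{S\text{-gr}}(G_1,G_2)$ is a twisted constant scheme: after base change to a connected Galois splitting cover $S'\to S$ with finite group $\Gamma$ it becomes $S'\times\mathrm{Isom}_{\mathbb Z}(M,M)$, and because $\Gamma$ is finite every connected component of $Y$ is of the form $S'/H$ with $H\le\Gamma$, hence finite \'{e}tale over $S$ and, being connected and \'{e}tale over the regular irreducible $S$, itself irreducible. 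Taking $\tilde S$ to be the component through the $T$-point $\varphi$ (using connectedness of $T$) and $\Phi$ the restriction of the tautological isomorphism then yields (i) and (ii) immediately. Two minor points deserve a sentence in a written-up version: effectivity of descent for the infinite disjoint union $S'\times\mathrm{Isom}_{\mathbb Z}(M,M)$ should be checked orbit by orbit (each $\Gamma$-orbit is finite, so the corresponding piece is affine over $S'$ and descends); and the hypothesis that $G_1,G_2$ are forms of each other is in fact forced by the existence of $\varphi$ over the non-empty $T$, since pulling back to a geometric point of $T$ already identifies the character groups. You are also right that the finite-residue-field hypothesis of Theorem \ref{PropEquatingGroups2} plays no role in this case; it matters only for the semisimple part (Proposition \ref{PropEquatingGroups3}), where $\underline{\mathrm{Isom}}$ is a torsor under a positive-dimensional group and one must produce a finite \'{e}tale multisection by a Bertini-type argument over a finite field.
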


\begin{prop}
\label{PropEquatingGroups5}
Let $T$ and $S$ be the same as in Theorem \ref{PropEquatingGroups2}.
Let $M_1$ and $M_2$ be two $S$-group schemes of multiplicative type.
Let $\alpha_1, \alpha_2: M_1 \rightrightarrows M_2$
be two $S$-group scheme morphisms such that
$\alpha_1|_T = \alpha_2|_T$.
Then $\alpha_1 = \alpha_2$.
\end{prop}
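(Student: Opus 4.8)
The plan is to translate everything into the standard dictionary between group schemes of multiplicative type and their character groups, and then to observe that restriction of a morphism to a non-empty closed subscheme loses no information at the level of character groups. Write $S=\Spec(R)$ with $R$ a regular semi-local domain, so that $S$ is connected, normal and noetherian. Since $S$ is normal, each $M_i$ is \emph{isotrivial} (\cite[Exp.~X, Thm.~5.16]{D-G}), i.e.\ split by a finite \'etale surjective cover of $S$. Taking the fibre product of a splitting cover of $M_1$ with one of $M_2$, and then a connected component of the result (which is again finite \'etale and surjective over the connected scheme $S$), I obtain a \emph{connected} finite \'etale surjective morphism $p\colon \tilde S\to S$ such that $p^*M_i\cong \mathbf{D}_{\tilde S}(\hat M_i)$ is diagonalizable, where $\hat M_i$ is a finitely generated abelian group.

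Next I would record two elementary facts. First, since $p$ is faithfully flat, the base-change map $\Hom_{S\text{-gp}}(M_1,M_2)\to \Hom_{\tilde S\text{-gp}}(p^*M_1,p^*M_2)$ is injective. Second, by the duality between diagonalizable group schemes and abelian groups (\cite[Exp.~VIII]{D-G}), one has $\Hom_{\tilde S\text{-gp}}(\mathbf{D}_{\tilde S}(\hat M_1),\mathbf{D}_{\tilde S}(\hat M_2))=\Gamma\bigl(\tilde S,\ \underline{\Hom}_{\mathbb{Z}}(\hat M_2,\hat M_1)\bigr)$, and since $\tilde S$ is connected this is just the discrete group $\Hom_{\mathbb{Z}}(\hat M_2,\hat M_1)$ (a locally constant function on a connected scheme is constant). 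Combining the two, the morphisms $\alpha_1,\alpha_2$ are detected by a pair of abelian-group homomorphisms $a_1,a_2\colon \hat M_2\to \hat M_1$, and $\alpha_1=\alpha_2$ holds if and only if $a_1=a_2$.

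Finally I would track the hypothesis through this dictionary. Set $\tilde T=\tilde S\times_S T=p^{-1}(T)$; it is finite \'etale over $T$ and non-empty, because $p$ is surjective and $T\neq\emptyset$. The equality $\alpha_1|_T=\alpha_2|_T$, pulled back along $\tilde T\to T$, says that $p^*\alpha_1$ and $p^*\alpha_2$ have the same restriction over $\tilde T$; but $p^*\alpha_i$ corresponds to the \emph{constant} function on $\tilde S$ with value $a_i$, whose restriction to $\tilde T$ is the constant function with value $a_i$. Since $\tilde T\neq\emptyset$, equality of these two constant functions forces $a_1=a_2$, whence $\alpha_1=\alpha_2$, as claimed. (Neither the connectedness nor the possible non-reducedness of $T$ plays any role here: non-emptiness of $T$ is all that is used, and the \'etale-topological computations only see $T_{\mathrm{red}}$.)

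The argument is essentially a formal unwinding, so I do not anticipate a serious obstacle; the only place demanding attention is the reduction in the first paragraph — one must invoke normality of $S$ to get isotriviality of the $M_i$, and one must cite the form of the diagonalizable/abelian-group duality valid for finitely generated (not merely finite) abelian groups. Both ingredients are contained in \cite[Exp.~VIII, X]{D-G}.
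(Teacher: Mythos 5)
Your argument is correct. Note first that the paper itself states Proposition \ref{PropEquatingGroups5} without proof; the only hint of the intended argument is the rigidity principle the author uses for the analogous statement about group-scheme morphisms into a torus, namely that $\underline{\text{Hom}}_{S\text{-gr}}(M_1,M_2)$ is an \'etale, separated $S$-scheme, so the locus where two sections agree is open and closed, and agreement over a non-empty subscheme of the connected $S$ forces agreement everywhere. Your route --- split both $M_i$ over a connected finite \'etale surjective cover $\tilde S\to S$ (using normality of the regular $S$ for isotriviality), reduce by faithfully flat descent, and invoke the anti-equivalence $\mathbf{D}$ so that homomorphisms become sections of the constant sheaf $\text{Hom}_{\mathbb Z}(\hat M_2,\hat M_1)$, hence literal constants on the connected $\tilde S$ --- is the same rigidity phenomenon made explicit at the level of character groups rather than quoted as representability of the Hom-functor. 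What your version buys is self-containedness (everything reduces to ``a locally constant function on a connected scheme is constant'') and the explicit observation that only non-emptiness of $T$ is used; what it costs is the extra reduction step through isotriviality, which the \'etale-Hom-scheme formulation avoids. Both ingredients you flag (isotriviality over a normal connected noetherian base, and the diagonalizable/abelian-group duality for finitely generated character groups) are indeed in SGA3, Exp.~VIII and~X, and the small point you should make explicit is that ``of multiplicative type'' is taken here to include finite type, so the character groups are finitely generated; this holds for all the groups to which the proposition is applied.
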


\begin{proof}[Proof of Theorem \ref{PropEquatingGroups2}]
Let $Rad(G_r) \subset G_r$ be the radical of $G_r$ and let
$der(G_r) \subset G_r$ be the derived subgroup of $G_r$ ($r=1,2$)
(see \cite[Exp.XXII, 4.3]{D-G}).
By the very definition the radical is a tori. The $S$-group scheme
$der(G_r)$ is semi-simple ($r=1,2$).
Set $Z_r:= Rad(G_r) \cap der (G_r)$.
The above embeddings induce natural $S$-group morphisms
$$\Pi_r: Rad(G_r) \times_S der(G_r) \to G_r$$
with $Z_r$ as the kernel ($r=1,2$).
By \cite[Exp.XXII,Prop.6.2.4]{D-G} $\Pi_r$ is a central isogeny.
Particularly, $\Pi_r$ is a faithfully flat finite morphism by
\cite[Exp.XXII,Defn.4.2.9]{D-G}.
Let
$i_r: Z_r \hookrightarrow Rad(G_r) \times_S der(G_r)$
be the closed embedding.

The $T$-group scheme isomorphism
$\varphi: G_1|_T \to G_2|T$
induces certain $T$-group scheme isomorphisms
$\varphi_{der}: der(G_1|_T) \to der(G_2|_T)$,
$\varphi_{rad}: rad(G_1|_T) \to rad(G_2|_T)$
and
$\varphi_Z: Z_1|_T \to Z_2|_T$
{\bf such that}
$$(\Pi_2)|_T \circ (\varphi_{der}\times \varphi_{rad})=\varphi \circ (\Pi_1)|_T \ \text{and} \
i_{2,T} \circ \varphi_Z = (\varphi_{rad} \times \varphi_{der}) \circ i_{1,T}.$$

By Propositions
\ref{PropEquatingGroups3}
and
\ref{PropEquatingGroups4}
there exist a finite \'{e}tale morphism
$\pi: \tilde S \to S$
(with an irreducible scheme $\tilde S$)
and its section
$\delta: T \to \tilde S$
over $T$ and $\tilde S$-group scheme isomorphisms
$$\Phi_{der}: der(G_{1, \tilde S}) \to der(G_{2,\tilde S}) \ \ \text{,} \ \
\Phi_{Rad}: Rad(G_{1, \tilde S}) \to Rad(G_{2,\tilde S}) \ \ \text{and} \ \
\Phi_Z: Z_{1, \tilde S} \to Z_{2,\tilde S}$$
such that
$\delta^*(\Phi_{der})= \varphi_{der}$,
$\delta^*(\Phi_{rad})= \varphi_{rad}$
and
$\delta^*(\Phi_Z)= \varphi_Z$.

Since $Z_r$ is contained in the center of $der(G_r)$ and is of multiplicative type
Proposition
\ref{PropEquatingGroups5}
yields the equality
$$i_{2, \tilde S} \circ \Phi_Z = (\Phi_{Rad} \times \Phi_{der}) \circ i_{1, \tilde S}:
Z_{1, \tilde S} \to Rad(G_{2, \tilde S}) \times_{\tilde S} der(G_{2, \tilde S}).$$
Thus $(\Phi_{Rad} \times \Phi_{der})$ induces an $\tilde S$-group scheme isomorphism
$$\Phi: G_{1, \tilde S} \to G_{2, \tilde S}$$
such that
$\Pi_{2, \tilde S} \circ (\Phi_{Rad} \times \Phi_{der})= \Phi \circ \Pi_{1, \tilde S}$.
The latter equality yields the following one
$(\Pi_2)|_T \circ \delta^*(\Phi_{Rad} \times \Phi_{der})= \delta^*(\Phi) \circ (\Pi_1)|_T$,
which in turn yields the equality
$$(\Pi_2)|_T \circ (\varphi_{rad} \times \varphi_{der})= \delta^*(\Phi) \circ (\Pi_1)|_T.$$
Comparing it with the equality
$(\Pi_2)|_T \circ (\varphi_{rad} \times \varphi_{der})= \varphi \circ (\Pi_1)|_T$
and using the fact that
$(\Pi_1)|_T$
is strictly flat we conclude the equality
$\delta^*(\Phi)= \varphi$.

\end{proof}

\begin{proof}[Proof of Theorem \ref{PropEquatingGroups}]
By Theorem
\ref{PropEquatingGroups2}
there exists a finite \'{e}tale morphism
$\tilde S \xra{\pi} S$ together with its section $\delta: T \to
\tilde S$ over $T$ and $\tilde S$-group scheme isomorphisms
$$\Phi: G_{1, \tilde S} \to G_{2, \tilde S} \ \ \text{and} \
\Psi: C_{1, \tilde S} \to G_{1, \tilde S}$$
such that
$\delta^*(\Phi)= \varphi$, $\delta^*(\Psi)= \psi$.
It remains to show that
$\mu_{2, \tilde S} \circ \Phi = \Psi \circ \mu_{1, \tilde S}$.
To prove this equality recall that
$\mu_r$ can be naturally presented as a composition
$$G_r \xrightarrow{can_r} Corad(G_r) \xrightarrow{{\bar \mu}_r} C_r.$$
Since
$can_{2, \tilde S} \circ \Phi = Corad(\Phi) \circ can_{1, \tilde S}$
it remains to check that
${\bar \mu}_{2, \tilde S} \circ Corad(\Phi)=\Psi \circ {\bar \mu}_{1, \tilde S}$.
The latter equality follows from Proposition
\ref{PropEquatingGroups5}
and the equality
$({\bar \mu_2}|_{T}) \circ Corad(\varphi) = \psi \circ ({\bar \mu_1}|_{T})$,
which holds since
$(\mu_2|_{T}) \circ \varphi = \psi \circ (\mu_1|_{T})$
by the very assumption of the Theorem.

\end{proof}

Let $k$ be {\bf a finite field}.
Let $U$ be as in Definition
\ref{DefnNiceTriple}.
Let $S^{\prime}$ be an irreducible regular semi-local scheme over $k$ and
$p: S^{\prime} \to U$ be a $k$-morphism.
Let $T^{\prime}\subset S^{\prime}$ be a closed sub-scheme of $S^{\prime}$
such that the restriction
$p|_{T^{\prime}}: T^{\prime} \to U$ is an isomorphism.
We will assume below that $dim(T^{\prime}) < dim(S^{\prime})$,
where $dim$ is the Krull dimension.
For any closed point $u \in U$ and any $U$-scheme $V$ let $V_u=u\times_U V$ be
the fibre of the scheme $V$ over the point $u$.
For a finite set $A$ denote $\sharp A$ the cardinality of $A$.

\begin{lem}
\label{SmallAmountOfPoints}
Assume that all the closed points of $S^{\prime}$ have {\bf finite residue fields}.
Then there exists a finite \'{e}tale morphism
$\rho: S^{\prime\prime} \to S^{\prime}$ (with an irreducible scheme $S^{\prime\prime}$)
and a section
$\delta^{\prime}: T^{\prime} \to S^{\prime\prime}$
of $\rho$ over $T^{\prime}$ such that the following holds
\begin{itemize}
\item[\rm{(1)}]
for any closed point $u \in U$ let $u^{\prime} \in T^{\prime}$ be a unique point such that
$p(u^{\prime})=u$, then the point
$\delta^{\prime}(u^{\prime}) \in S^{\prime\prime}_u$ is the only
$k(u)$-rational point of $S^{\prime\prime}_u$,
\item[\rm{(2)}]
for any closed point $u \in U$ and any integer $r \geq 1$
one has
$$\sharp\{z \in S^{\prime\prime}_u| \text{deg}[z:u]=r \} \leq \ \sharp\{x \in \Aff^1_u| \text{deg}[z:u]=r \}$$
\end{itemize}
\end{lem}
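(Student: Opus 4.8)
The plan is to construct $S''$ as a connected component of $\Spec(B'[t]/(\phi))$ for a single carefully chosen monic polynomial $\phi \in B'[t]$, where $B' = \Gamma(S', \mathcal O_{S'})$; the section $\delta'$ will come from the relation $t = 0$ over $T'$, and the whole difficulty — exactly where the finiteness of $k$ is felt — lies in the choice of the reductions of $\phi$ at the finitely many closed points of $S'$.

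\textbf{Reformulation.} Let $y_1, \dots, y_m$ be the closed points of $S'$; each lies over a closed point $u = p(y_i)$ of $U$, and put $d_i = [k(y_i):k(u)]$, $q_u = \sharp k(u)$, so $k(y_i) = \mathbb F_{q_u^{d_i}}$. Among the $y_i$ are the points $u'_j := T' \cap p^{-1}(u_j)$, one over each closed point $u_j$ of $U$, with $k(u'_j) = k(u_j)$. Since $B'$ is semilocal with maximal ideals $\mathfrak m_{y_i}$, since $B' \twoheadrightarrow \mathcal O = \Gamma(T')$ detects the $u'_j$, and since $I(T')$ and the $\mathfrak m_{y_i}$ with $y_i \notin T'$ are pairwise comaximal, the Chinese remainder theorem lets me prescribe independently a monic $\phi|_{T'} \in \mathcal O[t]$ of some degree $N$ and monic $\bar\phi_i := \phi \bmod \mathfrak m_{y_i} \in k(y_i)[t]$ of degree $N$ for the $y_i \notin T'$, and then lift to a monic $\phi \in B'[t]$ of degree $N$. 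Once $\phi$ is fixed with each $\bar\phi_i$ separable, $\rho$ is finite \'etale (the discriminant of $\phi$ is a unit of the semilocal ring $B'$), and the closed points of the fibre $S''_u = \Spec(B'[t]/(\phi)\otimes_{\mathcal O}k(u))$ are precisely the pairs $(y_i, g)$ with $y_i$ over $u$ and $g$ an irreducible factor of $\bar\phi_i$, the point $(y_i,g)$ having residue degree $d_i\deg g$ over $u$; such points never collide, as distinct $g$'s are coprime and distinct $y_i$'s give distinct images in $S'$. Hence property (1) will hold provided the factor $t$ appears exactly once in each $\bar\phi_{u'_j}$ and is its only linear factor, while $\bar\phi_i$ has no linear factor for $y_i \notin T'$ with $d_i = 1$; and property (2) becomes the numerical assertion that for each $r$ the number of pairs $(y_i, g)$ over $u$ with $d_i\deg g = r$ is at most $N_r(q_u) := \sharp\{x \in \Aff^1_{k(u)} : \deg[x:u] = r\}$, i.e. the number of monic irreducible polynomials of degree $r$ over $k(u)$.

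\textbf{Choice of $\phi$.} Fix a large integer $N$ and make the following choices: for each closed point $u_j$ of $U$ pick a monic irreducible $\psi_j \in k(u_j)[t]$ of degree $N-1 \ge 2$ (so $\psi_j$ is separable, has no root in $k(u_j)$, and $\psi_j(0)\neq 0$), lift $(\psi_j)_j$ to a monic $\psi \in \mathcal O[t]$ of degree $N-1$ via $\mathcal O \twoheadrightarrow \prod_j k(u_j)$, and set $\phi|_{T'} := t\,\psi$; for each $y_i \notin T'$ pick a monic irreducible $\bar\phi_i \in k(y_i)[t]$ of degree $N$. All these polynomials are separable, so $\phi$ is \'etale; $\phi|_{T'} = t\psi$ has vanishing constant term and linear coefficient $\psi(0) \in \mathcal O^{\times}$, so $t \mid \phi|_{T'}$ in $\mathcal O[t]$ with a unit cofactor at $t = 0$, yielding the $\mathcal O$-algebra map $B'[t]/(\phi) \to \mathcal O$, $t \mapsto 0$, hence the section $\delta'$ over $T'$. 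Over $u$ the residue degrees of the closed points of $S''_u$ are: $1$ (once, the point $\delta'(u'_j)$, from the factor $t$), $N-1$ (once, from $\psi_j$), and $d_iN \ge N$ for each $y_i \notin T'$ over $u$; only the first equals $1$, only the second equals $N-1$, and any value $dN$ is attained at most $m$ times. Choosing $N$ so large that $N_{dN}(q_u)\ge m$ for all the finitely many relevant $d$ and $u$ — possible since $N_e(q)$ grows like $q^e/e$ — makes all the bounds $\le N_r(q_u)$ hold, and property (1) holds because the only linear factor of $\bar\phi_{u'_j}$ is $t$ and the remaining $\bar\phi_i$ with $d_i=1$ are irreducible of degree $N\ge 3$. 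Finally I replace $S'' = \Spec(B'[t]/(\phi))$ by the connected component containing $\delta'(T')$; this component is connected (being isomorphic to the irreducible $U$), finite \'etale and surjective over $S'$ (a clopen direct factor of a finite \'etale cover, with nonempty image over the connected $S'$), and regular, hence irreducible, and it still carries $\delta'$ with fibres contained in the old ones, so (1) and (2) persist.

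\textbf{The main obstacle.} Everything above is routine (CRT, the discriminant criterion for \'etaleness, the count of monic irreducibles over a finite field, passage to a connected component) except the one essential point: over a finite field one cannot choose projections or polynomials ``generically'', and the reductions $\bar\phi_i$ must be selected from the finitely many polynomials of each degree, simultaneously and compatibly over the several closed points of $U$ and the embedded $T'$, so as to reconcile separability, the single-rational-point condition — which forces every spurious rational point of a fibre to be blown up into a point of higher degree — and the caps $N_r(q_u)$, which bound how aggressively the degrees may be raised for a given $N$. The quantitative heart is that $N_r(q)\to\infty$ with $r$, so that taking $N$ large creates enough room; getting this bookkeeping to fit uniformly in $u$ and $T'$ (and compatibly with the earlier Theorems \ref{PropEquatingGroups} and \ref{ThmEquatingGroups}, through which $S'$ arises) is where the real work lies.
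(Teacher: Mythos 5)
Your construction is correct and complete: the CRT argument on the semi-local ring $B'=\Gamma(S',\mathcal O_{S'})$ (using that $I(T')$ and the maximal ideals of the closed points outside $T'$ are pairwise comaximal) does produce a monic $\phi$ of degree $N$ with the prescribed separable reductions, the discriminant criterion gives \'etaleness, the factor $t$ of $\phi|_{T'}=t\psi$ with $\psi(0)\in\mathcal O^{\times}$ gives the section, passing to the connected component of $\delta'(T')$ in the regular scheme $\Spec(B'[t]/(\phi))$ gives irreducibility while only shrinking the fibres, and the degree bookkeeping ($1$, $N-1$, and $d_iN$ with multiplicity at most $m$, against the count of monic irreducibles over $k(u)$ which tends to infinity with the degree) settles (1) and (2) for $N$ large. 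Note that the paper states this lemma without proof, so there is nothing to compare against directly; your argument is the standard one for this kind of statement, and the only point worth making fully explicit is the one you use tacitly at the start, namely that each closed point of $S'$ lies over a closed point of $U$ because its residue field is finite over $k$.
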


\end{document}